\def\dated#1{\def\thedate{#1}}%
 \newdimen\xydashw@@
\newdimen\high%
\newdimen\ul%
\newdimen\wdth%
\def\ratchet#1#2{\ifnum#1<#2\global #1=#2\fi}%
\def\ifnextchar#1#2#3{\let\@tempe%
#1\def\@tempa{#2}\def\@tempb{#3}\futurelet%
    \@tempc\@ifnch}%
\def\@ifnch{\ifx \@tempc \@sptoken \let\@tempd\@xifnch%
      \else \ifx \@tempc \@tempe\let\@tempd\@tempa\else\let\@tempd\@tempb\fi%
      \fi \@tempd}%
\def\:{\let\@sptoken= } \:  
\def\:{\@xifnch} \expandafter\def\: {\futurelet\@tempc\@ifnch}%
\let\ifnextchar\@ifnextchar%
\newdimen\axis \axis=\fontdimen22\textfont2%
\def\scalefactor#1{\ul=#1\ul \X@xbase=#1\X@xbase \Y@ybase=#1\Y@ybase}%
\def\fontscale#1{%
\if#1h\relax%
\font\xydashfont=xydash10 scaled \magstephalf%
\font\xyatipfont=xyatip10 scaled \magstephalf%
\font\xybtipfont=xybtip10 scaled \magstephalf%
\font\xybsqlfont=xybsql10 scaled \magstephalf%
\font\xycircfont=xycirc10 scaled \magstephalf%
\else%
\font\xydashfont=xydash10 scaled \magstep#1%
\font\xyatipfont=xyatip10 scaled \magstep#1%
\font\xybtipfont=xybtip10 scaled \magstep#1%
\font\xybsqlfont=xybsql10 scaled \magstep#1%
\font\xycircfont=xycirc10 scaled \magstep#1%
\fi}%
\def\bfig{\vcenter\bgroup\xy}%
\def\efig{\endxy\egroup}%
\def\car#1#2\nil{#1}%
\def\morphism{\ifnextchar({\morphismp}{\morphismp(0,0)}}%
\def\morphismp(#1){\ifnextchar|{\morphismpp(#1)}{\morphismpp(#1)|a|}}%
\def\morphismpp(#1)|#2|{\ifnextchar/{\morphismppp(#1)|#2|}%
    {\morphismppp(#1)|#2|/>/}}%
\def\morphismppp(#1)|#2|/#3/{%
    \ifnextchar<{\morphismpppp(#1)|#2|/#3/}%
    {\morphismpppp(#1)|#2|/#3/<\default,0>}}%
\def\morphismpppp(#1,#2)|#3|/#4/<#5,#6>[#7`#8;#9]{%
\xend#1\advance \xend by #5%
\yend#2\advance \yend by #6%
\domorphism(#1,#2)|#3|/#4/<#5,#6>[{#7}`{#8};{#9}]}%
\def\domorphism(#1,#2)|#3|/#4/<#5,#6>[#7`#8;#9]{%
\def\next{\car#4.\nil}%
\if@\next\relax%
 \if#3l%
  \ifnum #6>0%
   \POS(#1,#2)*+!!<0ex,\axis>{#7}\ar#4^-{#9} (\xend,\yend)*+!!<0ex,\axis>{#8}%
  \else%
   \POS(#1,#2)*+!!<0ex,\axis>{#7}\ar#4_-{#9} (\xend,\yend)*+!!<0ex,\axis>{#8}%
  \fi%
 \else \if#3m%
    \setbox0\hbox{$#9$}%
   \ifdim \wd0=0pt%
     \POS(#1,#2)*+!!<0ex,\axis>{#7}\ar#4 (\xend,\yend)*+!!<0ex,\axis>{#8}%
   \else%
     \POS(#1,#2)*+!!<0ex,\axis>{#7}\ar#4|-*+<1pt,4pt>{\labelstyle#9}%
       (\xend,\yend)*+!!<0ex,\axis>{#8}%
   \fi%
 \else \if#3r%
  \ifnum #6<0%
   \POS(#1,#2)*+!!<0ex,\axis>{#7}\ar#4^-{#9} (\xend,\yend)*+!!<0ex,\axis>{#8}%
  \else%
   \POS(#1,#2)*+!!<0ex,\axis>{#7}\ar#4_-{#9} (\xend,\yend)*+!!<0ex,\axis>{#8}%
  \fi%
 \else \if#3a%
  \ifnum #5>0%
   \POS(#1,#2)*+!!<0ex,\axis>{#7}\ar#4^-{#9} (\xend,\yend)*+!!<0ex,\axis>{#8}%
  \else%
   \POS(#1,#2)*+!!<0ex,\axis>{#7}\ar#4_-{#9} (\xend,\yend)*+!!<0ex,\axis>{#8}%
  \fi%
 \else \if#3b%
  \ifnum #5<0%
   \POS(#1,#2)*+!!<0ex,\axis>{#7}\ar#4^-{#9} (\xend,\yend)*+!!<0ex,\axis>{#8}%
  \else%
   \POS(#1,#2)*+!!<0ex,\axis>{#7}\ar#4_-{#9} (\xend,\yend)*+!!<0ex,\axis>{#8}%
  \fi%
 \else%
   \POS(#1,#2)*+!!<0ex,\axis>{#7}\ar#4 (\xend,\yend)*+!!<0ex,\axis>{#8}%
 \fi\fi\fi\fi\fi%
\else%
 \if#3l%
  \ifnum #6>0%
   \POS(#1,#2)*+!!<0ex,\axis>{#7}\ar@{#4}^-{#9} (\xend,\yend)*+!!<0ex,\axis>{#8}%
  \else%
   \POS(#1,#2)*+!!<0ex,\axis>{#7}\ar@{#4}_-{#9} (\xend,\yend)*+!!<0ex,\axis>{#8}%
  \fi%
 \else \if#3m%
    \setbox0\hbox{$#9$}%
   \ifdim \wd0=0pt%
     \POS(#1,#2)*+!!<0ex,\axis>{#7}\ar@{#4} (\xend,\yend)*+!!<0ex,\axis>{#8}%
   \else%
     \POS(#1,#2)*+!!<0ex,\axis>{#7}\ar@{#4}|-*+<1pt,4pt>{\labelstyle#9}%
         (\xend,\yend)*+!!<0ex,\axis>{#8}%
   \fi%
 \else \if#3r%
  \ifnum #6<0%
   \POS(#1,#2)*+!!<0ex,\axis>{#7}\ar@{#4}^-{#9} (\xend,\yend)*+!!<0ex,\axis>{#8}%
  \else%
   \POS(#1,#2)*+!!<0ex,\axis>{#7}\ar@{#4}_-{#9} (\xend,\yend)*+!!<0ex,\axis>{#8}%
  \fi%
 \else \if#3a%
  \ifnum #5>0%
   \POS(#1,#2)*+!!<0ex,\axis>{#7}\ar@{#4}^-{#9} (\xend,\yend)*+!!<0ex,\axis>{#8}%
  \else%
   \POS(#1,#2)*+!!<0ex,\axis>{#7}\ar@{#4}_-{#9} (\xend,\yend)*+!!<0ex,\axis>{#8}%
  \fi%
 \else \if#3b%
  \ifnum #5<0%
   \POS(#1,#2)*+!!<0ex,\axis>{#7}\ar@{#4}^-{#9} (\xend,\yend)*+!!<0ex,\axis>{#8}%
  \else%
   \POS(#1,#2)*+!!<0ex,\axis>{#7}\ar@{#4}_-{#9} (\xend,\yend)*+!!<0ex,\axis>{#8}%
  \fi%
 \else%
   \POS(#1,#2)*+!!<0ex,\axis>{#7}\ar@{#4} (\xend,\yend)*+!!<0ex,\axis>{#8}%
 \fi\fi\fi\fi\fi%
\fi\ignorespaces}%
\def\vect(#1,#2)/#3/<#4,#5>{%
 \xend#1 \yend#2 \advance\xend by #4 \advance\yend by #5%
     \POS(#1,#2)\ar#3 (\xend,\yend)}%
\def\squarepppp(#1,#2)|#3|/#4`#5`#6`#7/<#8>[#9]{%
\xpos#1\ypos#2%
\def\next|##1##2##3##4|{%
 \def\xa{##1}\def\xb{##2}\def\xc{##3}\def\xd{##4}\ignorespaces}%
\next|#3|%
\def\next<##1,##2>{\deltax=##1\deltay=##2\ignorespaces}%
\next<#8>%
\def\next[##1`##2`##3`##4;##5`##6`##7`##8]{%
    \def\nodea{##1}\def\nodeb{##2}\def\nodec{##3}\def\noded{##4}%
    \def\labela{##5}\def\labelb{##6}\def\labelc{##7}\def\labeld{##8}\ignorespaces}%
\next[#9]%
\morphism(\xpos,\ypos)|\xd|/{#7}/<\deltax,0>[\nodec`\noded;\labeld]%
\advance \ypos by \deltay%
\morphism(\xpos,\ypos)|\xb|/{#5}/<0,-\deltay>[\nodea`\nodec;\labelb]%
\morphism(\xpos,\ypos)|\xa|/{#4}/<\deltax,0>[\nodea`\nodeb;\labela]%
 \advance \xpos by \deltax%
\morphism(\xpos,\ypos)|\xc|/{#6}/<0,-\deltay>[\nodeb`\noded;\labelc]%
\ignorespaces}%
\def\square{\ifnextchar({\squarep}{\squarep(0,0)}}%
\def\squarep(#1){\ifnextchar|{\squarepp(#1)}{\squarepp(#1)|alrb|}}%
\def\squarepp(#1)|#2|{\ifnextchar/{\squareppp(#1)|#2|}%
    {\squareppp(#1)|#2|/>`>`>`>/}}%
\def\squareppp(#1)|#2|/#3`#4`#5`#6/{%
    \ifnextchar<{\squarepppp(#1)|#2|/#3`#4`#5`#6/}%
    {\squarepppp(#1)|#2|/#3`#4`#5`#6/<\default,\default>}}%
\def\diamondpppp(#1,#2)|#3|/#4`#5`#6`#7/<#8>[#9]{%
\xpos#1\ypos#2%
\def\next|##1##2##3##4|{%
 \def\xa{##1}\def\xb{##2}\def\xc{##3}\def\xd{##4}\ignorespaces}%
\next|#3|%
\def\next<##1,##2>{\deltax=##1\deltay=##2\ignorespaces}%
\next<#8>%
\def\next[##1`##2`##3`##4;##5`##6`##7`##8]{%
    \def\nodea{##1}\def\nodeb{##2}\def\nodec{##3}\def\noded{##4}%
    \def\labela{##5}\def\labelb{##6}\def\labelc{##7}%
\def\labeld{##8}\ignorespaces}%
\next[#9]%
\advance\ypos\deltay
\morphism(\xpos,\ypos)|\xc|/{#6}/<\deltax,-\deltay>[\nodeb`\noded;\labelc]%
\advance\xpos \deltax
\advance\xpos \deltax
\morphism(\xpos,\ypos)|\xd|/{#7}/<-\deltax,-\deltay>[\nodec`\noded;\labeld]%
\advance\ypos\deltay \advance\xpos -\deltax
\morphism(\xpos,\ypos)|\xa|/{#4}/<-\deltax,-\deltay>[\nodea`\nodeb;\labela]%
\morphism(\xpos,\ypos)|\xb|/{#5}/<\deltax,-\deltay>[\nodea`\nodec;\labelb]%
}
\def\diamondp(#1){\ifnextchar|{\diamondpp(#1)}{\diamondpp(#1)|lrlr|}}%
\def\diamondpp(#1)|#2|{\ifnextchar/{\diamondppp(#1)|#2|}%
    {\diamondppp(#1)|#2|/>`>`>`>/}}%
\def\diamondppp(#1)|#2|/#3`#4`#5`#6/{%
    \ifnextchar<{\diamondpppp(#1)|#2|/#3`#4`#5`#6/}%
    {\diamondpppp(#1)|#2|/#3`#4`#5`#6/<400,400>}}%
\def\ptrianglepppp(#1,#2)|#3|/#4`#5`#6/<#7>[#8]{%
\xpos#1\ypos#2%
\def\next|##1##2##3|{\def\xa{##1}\def\xb{##2}\def\xc{##3}}%
\next|#3|%
\def\next<##1,##2>{\deltax=##1\deltay=##2\ignorespaces}%
\next<#7>%
\def\next[##1`##2`##3;##4`##5`##6]{%
    \def\nodea{##1}\def\nodeb{##2}\def\nodec{##3}%
    \def\labela{##4}\def\labelb{##5}\def\labelc{##6}}%
\next[#8]%
\advance\ypos by \deltay%
\morphism(\xpos,\ypos)|\xa|/{#4}/<\deltax,0>[\nodea`\nodeb;\labela]%
\morphism(\xpos,\ypos)|\xb|/{#5}/<0,-\deltay>[\nodea`\nodec;\labelb]%
\advance\xpos by \deltax%
\morphism(\xpos,\ypos)|\xc|/{#6}/<-\deltax,-\deltay>[\nodeb`\nodec;\labelc]%
\ignorespaces}%
\def\qtrianglepppp(#1,#2)|#3|/#4`#5`#6/<#7>[#8]{%
\xpos#1\ypos#2%
\def\next|##1##2##3|{\def\xa{##1}\def\xb{##2}\def\xc{##3}}%
\next|#3|%
\def\next<##1,##2>{\deltax=##1\deltay=##2\ignorespaces}%
\next<#7>%
\def\next[##1`##2`##3;##4`##5`##6]{%
    \def\nodea{##1}\def\nodeb{##2}\def\nodec{##3}%
    \def\labela{##4}\def\labelb{##5}\def\labelc{##6}}%
\next[#8]%
\advance\ypos by \deltay%
\morphism(\xpos,\ypos)|\xa|/{#4}/<\deltax,0>[\nodea`\nodeb;\labela]%
\morphism(\xpos,\ypos)|\xb|/{#5}/<\deltax,-\deltay>[\nodea`\nodec;\labelb]%
\advance\xpos by \deltax%
\morphism(\xpos,\ypos)|\xc|/{#6}/<0,-\deltay>[\nodeb`\nodec;\labelc]%
\ignorespaces}%
\def\dtrianglepppp(#1,#2)|#3|/#4`#5`#6/<#7>[#8]{%
\xpos#1\ypos#2%
\def\next|##1##2##3|{\def\xa{##1}\def\xb{##2}\def\xc{##3}}%
\next|#3|%
\def\next<##1,##2>{\deltax=##1\deltay=##2\ignorespaces}%
\next<#7>%
\def\next[##1`##2`##3;##4`##5`##6]{%
    \def\nodea{##1}\def\nodeb{##2}\def\nodec{##3}%
    \def\labela{##4}\def\labelb{##5}\def\labelc{##6}}%
\next[#8]%
\morphism(\xpos,\ypos)|\xc|/{#6}/<\deltax,0>[\nodeb`\nodec;\labelc]%
\advance\ypos by \deltay\advance \xpos by \deltax%
\morphism(\xpos,\ypos)|\xa|/{#4}/<-\deltax,-\deltay>[\nodea`\nodeb;\labela]%
\morphism(\xpos,\ypos)|\xb|/{#5}/<0,-\deltay>[\nodea`\nodec;\labelb]%
\ignorespaces}%
\def\btrianglepppp(#1,#2)|#3|/#4`#5`#6/<#7>[#8]{%
\xpos#1\ypos#2%
\def\next|##1##2##3|{\def\xa{##1}\def\xb{##2}\def\xc{##3}}%
\next|#3|%
\def\next<##1,##2>{\deltax=##1\deltay=##2\ignorespaces}%
\next<#7>%
\def\next[##1`##2`##3;##4`##5`##6]{%
    \def\nodea{##1}\def\nodeb{##2}\def\nodec{##3}%
    \def\labela{##4}\def\labelb{##5}\def\labelc{##6}}%
\next[#8]%
\morphism(\xpos,\ypos)|\xc|/{#6}/<\deltax,0>[\nodeb`\nodec;\labelc]%
\advance\ypos by \deltay%
\morphism(\xpos,\ypos)|\xa|/{#4}/<0,-\deltay>[\nodea`\nodeb;\labela]%
\morphism(\xpos,\ypos)|\xb|/{#5}/<\deltax,-\deltay>[\nodea`\nodec;\labelb]%
\ignorespaces}%
\def\Atrianglepppp(#1,#2)|#3|/#4`#5`#6/<#7>[#8]{%
\xpos#1\ypos#2%
\def\next|##1##2##3|{\def\xa{##1}\def\xb{##2}\def\xc{##3}}%
\next|#3|%
\def\next<##1,##2>{\deltax=##1\deltay=##2\ignorespaces}%
\next<#7>%
\def\next[##1`##2`##3;##4`##5`##6]{%
    \def\nodea{##1}\def\nodeb{##2}\def\nodec{##3}%
    \def\labela{##4}\def\labelb{##5}\def\labelc{##6}}%
\next[#8]%
\multiply\deltax by 2%
\morphism(\xpos,\ypos)|\xc|/{#6}/<\deltax,0>[\nodeb`\nodec;\labelc]%
\divide\deltax by 2%
\advance\ypos by \deltay\advance\xpos by \deltax%
\morphism(\xpos,\ypos)|\xa|/{#4}/<-\deltax,-\deltay>[\nodea`\nodeb;\labela]%
\morphism(\xpos,\ypos)|\xb|/{#5}/<\deltax,-\deltay>[\nodea`\nodec;\labelb]%
\ignorespaces}%
\def\Vtrianglepppp(#1,#2)|#3|/#4`#5`#6/<#7>[#8]{%
\xpos#1\ypos#2%
\def\next|##1##2##3|{\def\xa{##1}\def\xb{##2}\def\xc{##3}}%
\next|#3|%
\def\next<##1,##2>{\deltax=##1\deltay=##2\ignorespaces}%
\next<#7>%
\def\next[##1`##2`##3;##4`##5`##6]{%
    \def\nodea{##1}\def\nodeb{##2}\def\nodec{##3}%
    \def\labela{##4}\def\labelb{##5}\def\labelc{##6}}%
\next[#8]%
\advance\ypos by \deltay%
\morphism(\xpos,\ypos)|\xb|/{#5}/<\deltax,-\deltay>[\nodea`\nodec;\labelb]%
\multiply\deltax by 2%
\morphism(\xpos,\ypos)|\xa|/{#4}/<\deltax,0>[\nodea`\nodeb;\labela]%
\advance\xpos by \deltax \divide \deltax by 2%
\morphism(\xpos,\ypos)|\xc|/{#6}/<-\deltax,-\deltay>[\nodeb`\nodec;\labelc]%
\ignorespaces}%
\def\Ctrianglepppp(#1,#2)|#3|/#4`#5`#6/<#7>[#8]{%
\xpos#1\ypos#2%
\def\next|##1##2##3|{\def\xa{##1}\def\xb{##2}\def\xc{##3}}%
\next|#3|%
\def\next<##1,##2>{\deltax=##1\deltay=##2\ignorespaces}%
\next<#7>%
\def\next[##1`##2`##3;##4`##5`##6]{%
    \def\nodea{##1}\def\nodeb{##2}\def\nodec{##3}%
    \def\labela{##4}\def\labelb{##5}\def\labelc{##6}}%
\next[#8]%
\advance \ypos by \deltay%
\morphism(\xpos,\ypos)|\xc|/{#6}/<\deltax,-\deltay>[\nodeb`\nodec;\labelc]%
\advance\ypos by \deltay \advance \xpos by \deltax%
\morphism(\xpos,\ypos)|\xa|/{#4}/<-\deltax,-\deltay>[\nodea`\nodeb;\labela]%
\multiply\deltay by 2%
\morphism(\xpos,\ypos)|\xb|/{#5}/<0,-\deltay>[\nodea`\nodec;\labelb]%
\ignorespaces}%
\def\Dtrianglepppp(#1,#2)|#3|/#4`#5`#6/<#7>[#8]{%
\xpos#1\ypos#2%
\def\next|##1##2##3|{\def\xa{##1}\def\xb{##2}\def\xc{##3}}%
\next|#3|%
\def\next<##1,##2>{\deltax=##1\deltay=##2\ignorespaces}%
\next<#7>%
\def\next[##1`##2`##3;##4`##5`##6]{%
    \def\nodea{##1}\def\nodeb{##2}\def\nodec{##3}%
    \def\labela{##4}\def\labelb{##5}\def\labelc{##6}}%
\next[#8]%
\advance\xpos by \deltax \advance\ypos by \deltay%
\morphism(\xpos,\ypos)|\xc|/{#6}/<-\deltax,-\deltay>[\nodeb`\nodec;\labelc]%
\advance\xpos by -\deltax \advance\ypos by \deltay%
\morphism(\xpos,\ypos)|\xb|/{#5}/<\deltax,-\deltay>[\nodea`\nodeb;\labelb]%
\multiply \deltay by 2%
\morphism(\xpos,\ypos)|\xa|/{#4}/<0,-\deltay>[\nodea`\nodec;\labela]%
\ignorespaces}%
\def\ptrianglep(#1){\ifnextchar|{\ptrianglepp(#1)}{\ptrianglepp(#1)|alr|}}%
\def\ptrianglepp(#1)|#2|{\ifnextchar/{\ptriangleppp(#1)|#2|}%
    {\ptriangleppp(#1)|#2|/>`>`>/}}%
\def\ptriangleppp(#1)|#2|/#3`#4`#5/{%
    \ifnextchar<{\ptrianglepppp(#1)|#2|/#3`#4`#5/}%
    {\ptrianglepppp(#1)|#2|/#3`#4`#5/<\default,\default>}}%
\def\qtrianglep(#1){\ifnextchar|{\qtrianglepp(#1)}{\qtrianglepp(#1)|alr|}}%
\def\qtrianglepp(#1)|#2|{\ifnextchar/{\qtriangleppp(#1)|#2|}%
    {\qtriangleppp(#1)|#2|/>`>`>/}}%
\def\qtriangleppp(#1)|#2|/#3`#4`#5/{%
    \ifnextchar<{\qtrianglepppp(#1)|#2|/#3`#4`#5/}%
    {\qtrianglepppp(#1)|#2|/#3`#4`#5/<\default,\default>}}%
\def\dtrianglep(#1){\ifnextchar|{\dtrianglepp(#1)}{\dtrianglepp(#1)|lrb|}}%
\def\dtrianglepp(#1)|#2|{\ifnextchar/{\dtriangleppp(#1)|#2|}%
    {\dtriangleppp(#1)|#2|/>`>`>/}}%
\def\dtriangleppp(#1)|#2|/#3`#4`#5/{%
    \ifnextchar<{\dtrianglepppp(#1)|#2|/#3`#4`#5/}%
    {\dtrianglepppp(#1)|#2|/#3`#4`#5/<\default,\default>}}%
\def\btrianglep(#1){\ifnextchar|{\btrianglepp(#1)}{\btrianglepp(#1)|lrb|}}%
\def\btrianglepp(#1)|#2|{\ifnextchar/{\btriangleppp(#1)|#2|}%
    {\btriangleppp(#1)|#2|/>`>`>/}}%
\def\btriangleppp(#1)|#2|/#3`#4`#5/{%
    \ifnextchar<{\btrianglepppp(#1)|#2|/#3`#4`#5/}%
    {\btrianglepppp(#1)|#2|/#3`#4`#5/<\default,\default>}}%
\def\Atrianglep(#1){\ifnextchar|{\Atrianglepp(#1)}{\Atrianglepp(#1)|lrb|}}%
\def\Atrianglepp(#1)|#2|{\ifnextchar/{\Atriangleppp(#1)|#2|}%
    {\Atriangleppp(#1)|#2|/>`>`>/}}%
\def\Atriangleppp(#1)|#2|/#3`#4`#5/{%
    \ifnextchar<{\Atrianglepppp(#1)|#2|/#3`#4`#5/}%
    {\Atrianglepppp(#1)|#2|/#3`#4`#5/<\default,\default>}}%
\def\Vtrianglep(#1){\ifnextchar|{\Vtrianglepp(#1)}{\Vtrianglepp(#1)|alb|}}%
\def\Vtrianglepp(#1)|#2|{\ifnextchar/{\Vtriangleppp(#1)|#2|}%
    {\Vtriangleppp(#1)|#2|/>`>`>/}}%
\def\Vtriangleppp(#1)|#2|/#3`#4`#5/{%
    \ifnextchar<{\Vtrianglepppp(#1)|#2|/#3`#4`#5/}%
    {\Vtrianglepppp(#1)|#2|/#3`#4`#5/<\default,\default>}}%
\def\Ctrianglep(#1){\ifnextchar|{\Ctrianglepp(#1)}{\Ctrianglepp(#1)|arb|}}%
\def\Ctrianglepp(#1)|#2|{\ifnextchar/{\Ctriangleppp(#1)|#2|}%
    {\Ctriangleppp(#1)|#2|/>`>`>/}}%
\def\Ctriangleppp(#1)|#2|/#3`#4`#5/{%
    \ifnextchar<{\Ctrianglepppp(#1)|#2|/#3`#4`#5/}%
    {\Ctrianglepppp(#1)|#2|/#3`#4`#5/<\default,\default>}}%
\def\Dtrianglep(#1){\ifnextchar|{\Dtrianglepp(#1)}{\Dtrianglepp(#1)|lab|}}%
\def\Dtrianglepp(#1)|#2|{\ifnextchar/{\Dtriangleppp(#1)|#2|}%
    {\Dtriangleppp(#1)|#2|/>`>`>/}}%
\def\Dtriangleppp(#1)|#2|/#3`#4`#5/{%
    \ifnextchar<{\Dtrianglepppp(#1)|#2|/#3`#4`#5/}%
    {\Dtrianglepppp(#1)|#2|/#3`#4`#5/<\default,\default>}}%
\def\Atrianglepairpppp(#1)|#2|/#3`#4`#5`#6`#7/<#8>[#9]{%
\def\next(##1,##2){\xpos##1\ypos##2}%
\next(#1)%
\def\next|##1##2##3##4##5|{\def\xa{##1}\def\xb{##2}%
\def\xc{##3}\def\xd{##4}\def\xe{##5}}%
\next|#2|%
\def\next<##1,##2>{\deltax=##1\deltay=##2\ignorespaces}%
\next<#8>%
\def\next[##1`##2`##3`##4;##5`##6`##7`##8`##9]{%
 \def\nodea{##1}\def\nodeb{##2}\def\nodec{##3}\def\noded{##4}%
 \def\labela{##5}\def\labelb{##6}\def\labelc{##7}\def\labeld{##8}\def\labele{##9}}%
\next[#9]%
\morphism(\xpos,\ypos)|\xd|/{#6}/<\deltax,0>[\nodeb`\nodec;\labeld]%
\advance\xpos by \deltax%
\morphism(\xpos,\ypos)|\xe|/{#7}/<\deltax,0>[\nodec`\noded;\labele]%
\advance\ypos by \deltay%
\morphism(\xpos,\ypos)|\xa|/{#3}/<-\deltax,-\deltay>[\nodea`\nodeb;\labela]%
\morphism(\xpos,\ypos)|\xb|/{#4}/<0,-\deltay>[\nodea`\nodec;\labelb]%
\morphism(\xpos,\ypos)|\xc|/{#5}/<\deltax,-\deltay>[\nodea`\noded;\labelc]%
\ignorespaces}%
\def\Vtrianglepairpppp(#1)|#2|/#3`#4`#5`#6`#7/<#8>[#9]{%
\def\next(##1,##2){\xpos##1\ypos##2}%
\next(#1)%
\def\next|##1##2##3##4##5|{\def\xa{##1}\def\xb{##2}%
\def\xc{##3}\def\xd{##4}\def\xe{##5}}%
\next|#2|%
\def\next<##1,##2>{\deltax=##1\deltay=##2\ignorespaces}%
\next<#8>%
\def\next[##1`##2`##3`##4;##5`##6`##7`##8`##9]{%
 \def\nodea{##1}\def\nodeb{##2}\def\nodec{##3}\def\noded{##4}%
 \def\labela{##5}\def\labelb{##6}\def\labelc{##7}\def\labeld{##8}\def\labele{##9}}%
\next[#9]%
\advance\ypos by \deltay%
\morphism(\xpos,\ypos)|\xa|/{#3}/<\deltax,0>[\nodea`\nodeb;\labela]%
\morphism(\xpos,\ypos)|\xc|/{#5}/<\deltax,-\deltay>[\nodea`\noded;\labelc]%
\advance\xpos by \deltax%
\morphism(\xpos,\ypos)|\xb|/{#4}/<\deltax,0>[\nodeb`\nodec;\labelb]%
\morphism(\xpos,\ypos)|\xd|/{#6}/<0,-\deltay>[\nodeb`\noded;\labeld]%
\advance\xpos by \deltax%
\morphism(\xpos,\ypos)|\xe|/{#7}/<-\deltax,-\deltay>[\nodec`\noded;\labele]%
\ignorespaces}%
\def\Ctrianglepairpppp(#1)|#2|/#3`#4`#5`#6`#7/<#8>[#9]{%
\def\next(##1,##2){\xpos##1\ypos##2}%
\next(#1)%
\def\next|##1##2##3##4##5|{\def\xa{##1}\def\xb{##2}%
\def\xc{##3}\def\xd{##4}\def\xe{##5}}%
\next|#2|%
\def\next<##1,##2>{\deltax=##1\deltay=##2\ignorespaces}%
\next<#8>%
\def\next[##1`##2`##3`##4;##5`##6`##7`##8`##9]{%
 \def\nodea{##1}\def\nodeb{##2}\def\nodec{##3}\def\noded{##4}%
 \def\labela{##5}\def\labelb{##6}\def\labelc{##7}\def\labeld{##8}\def\labele{##9}}%
\next[#9]%
\advance\ypos by \deltay%
\morphism(\xpos,\ypos)|\xe|/{#7}/<0,-\deltay>[\nodec`\noded;\labele]%
\advance\xpos by -\deltax%
\morphism(\xpos,\ypos)|\xc|/{#5}/<\deltax,0>[\nodeb`\nodec;\labelc]%
\morphism(\xpos,\ypos)|\xd|/{#6}/<\deltax,-\deltay>[\nodeb`\noded;\labeld]%
\advance\ypos by \deltay%
\advance\xpos by \deltax%
\morphism(\xpos,\ypos)|\xa|/{#3}/<-\deltax,-\deltay>[\nodea`\nodeb;\labela]%
\morphism(\xpos,\ypos)|\xb|/{#4}/<0,-\deltay>[\nodea`\nodec;\labelb]%
\ignorespaces}%
\def\Dtrianglepairpppp(#1)|#2|/#3`#4`#5`#6`#7/<#8>[#9]{%
\def\next(##1,##2){\xpos##1\ypos##2}%
\next(#1)%
\def\next|##1##2##3##4##5|{\def\xa{##1}\def\xb{##2}%
\def\xc{##3}\def\xd{##4}\def\xe{##5}}%
\next|#2|%
\def\next<##1,##2>{\deltax=##1\deltay=##2\ignorespaces}%
\next<#8>%
\def\next[##1`##2`##3`##4;##5`##6`##7`##8`##9]{%
 \def\nodea{##1}\def\nodeb{##2}\def\nodec{##3}\def\noded{##4}%
 \def\labela{##5}\def\labelb{##6}\def\labelc{##7}\def\labeld{##8}\def\labele{##9}}%
\next[#9]%
\advance\ypos by \deltay%
\morphism(\xpos,\ypos)|\xc|/{#5}/<\deltax,0>[\nodeb`\nodec;\labelc]%
\morphism(\xpos,\ypos)|\xd|/{#6}/<0,-\deltay>[\nodeb`\noded;\labeld]%
\advance\ypos by \deltay%
\morphism(\xpos,\ypos)|\xa|/{#3}/<0,-\deltay>[\nodea`\nodeb;\labela]%
\morphism(\xpos,\ypos)|\xb|/{#4}/<\deltax,-\deltay>[\nodea`\nodec;\labelb]%
\advance\ypos by -\deltay%
\advance\xpos by \deltax%
\morphism(\xpos,\ypos)|\xe|/{#7}/<-\deltax,-\deltay>[\nodec`\noded;\labele]%
\ignorespaces}%
\def\Atrianglepairp(#1){\ifnextchar|{\Atrianglepairpp(#1)}%
{\Atrianglepairpp(#1)|lmrbb|}}%
\def\Atrianglepairpp(#1)|#2|{\ifnextchar/{\Atrianglepairppp(#1)|#2|}%
    {\Atrianglepairppp(#1)|#2|/>`>`>`>`>/}}%
\def\Atrianglepairppp(#1)|#2|/#3`#4`#5`#6`#7/{%
    \ifnextchar<{\Atrianglepairpppp(#1)|#2|/#3`#4`#5`#6`#7/}%
    {\Atrianglepairpppp(#1)|#2|/#3`#4`#5`#6`#7/<\default,\default>}}%
\def\Vtrianglepairp(#1){\ifnextchar|{\Vtrianglepairpp(#1)}%
{\Vtrianglepairpp(#1)|aalmr|}}%
\def\Vtrianglepairpp(#1)|#2|{\ifnextchar/{\Vtrianglepairppp(#1)|#2|}%
    {\Vtrianglepairppp(#1)|#2|/>`>`>`>`>/}}%
\def\Vtrianglepairppp(#1)|#2|/#3`#4`#5`#6`#7/{%
    \ifnextchar<{\Vtrianglepairpppp(#1)|#2|/#3`#4`#5`#6`#7/}%
    {\Vtrianglepairpppp(#1)|#2|/#3`#4`#5`#6`#7/<\default,\default>}}%
\def\Ctrianglepairp(#1){\ifnextchar|{\Ctrianglepairpp(#1)}%
{\Ctrianglepairpp(#1)|lrmlr|}}%
\def\Ctrianglepairpp(#1)|#2|{\ifnextchar/{\Ctrianglepairppp(#1)|#2|}%
    {\Ctrianglepairppp(#1)|#2|/>`>`>`>`>/}}%
\def\Ctrianglepairppp(#1)|#2|/#3`#4`#5`#6`#7/{%
    \ifnextchar<{\Ctrianglepairpppp(#1)|#2|/#3`#4`#5`#6`#7/}%
    {\Ctrianglepairpppp(#1)|#2|/#3`#4`#5`#6`#7/<\default,\default>}}%
\def\Dtrianglepairp(#1){\ifnextchar|{\Dtrianglepairpp(#1)}%
{\Dtrianglepairpp(#1)|lrmlr|}}%
\def\Dtrianglepairpp(#1)|#2|{\ifnextchar/{\Dtrianglepairppp(#1)|#2|}%
    {\Dtrianglepairppp(#1)|#2|/>`>`>`>`>/}}%
\def\Dtrianglepairppp(#1)|#2|/#3`#4`#5`#6`#7/{%
    \ifnextchar<{\Dtrianglepairpppp(#1)|#2|/#3`#4`#5`#6`#7/}%
    {\Dtrianglepairpppp(#1)|#2|/#3`#4`#5`#6`#7/<\default,\default>}}%
\def\pplace[#1](#2,#3)[#4]{\POS(#2,#3)*+!!<0ex,\axis>!#1{#4}\ignorespaces}%
\def\cplace(#1,#2)[#3]{\POS(#1,#2)*+!!<0ex,\axis>{#3}\ignorespaces}%
\def\pullback#1]#2]{\square#1]\trident#2]\ignorespaces}%
\def\tridentppp|#1#2#3|/#4`#5`#6/<#7,#8>[#9]{%
\def\next[##1;##2`##3`##4]{\def\nodee{##1}\def\labele{##2}%
   \def\labelf{##3}\def\labelg{##4}}%
\next[#9]%
\advance \xpos by -\deltax%
\advance \xpos by -#7\advance \ypos by #8%
\advance\deltax by #7%
\morphism(\xpos,\ypos)|#1|/{#4}/<\deltax,-#8>[\nodee`\nodeb;\labele]%
\advance\deltax by -#7%
\morphism(\xpos,\ypos)|#2|/{#5}/<#7,-#8>[\nodee`\nodea;\labelf]%
\advance\deltay by #8%
\morphism(\xpos,\ypos)|#3|/{#6}/<#7,-\deltay>[\nodee`\nodec;\labelg]%
\ignorespaces}%
\def\trident{\ifnextchar|{\tridentp}{\tridentp|amb|}}%
\def\tridentp|#1|{\ifnextchar/{\tridentpp|#1|}{\tridentpp|#1|/{>}`{>}`{>}/}}%
\def\tridentpp|#1|/#2/{\ifnextchar<{\tridentppp|#1|/#2/}%
  {\tridentppp|#1|/#2/<500,500>}}%
\def\setmorphismwidth#1#2#3#4{%
 \setbox0=\hbox{$#1{\labelstyle#3#3}#2$}#4=\wd0%
 \divide #4 by 2 \divide #4 by \ul%
 \advance #4 by 350 \ratchet{#4}{500}}%
\def\setSquarewidth[#1`#2`#3`#4;#5`#6`#7`#8]{%
 \setmorphismwidth{#1}{#2}{#5}{\topw}%
 \setmorphismwidth{#3}{#4}{#8}{\botw}%
\ratchet{\topw}{\botw}}%
\def\Squarepppp(#1)|#2|/#3/<#4>[#5]{%
 \setSquarewidth[#5]%
 \squarepppp(#1)|#2|/#3/<\topw,#4>[#5]%
\ignorespaces}%
\def\Squarep(#1){\ifnextchar|{\Squarepp(#1)}{\Squarepp(#1)|alrb|}}%
\def\Squarepp(#1)|#2|{\ifnextchar/{\Squareppp(#1)|#2|}%
    {\Squareppp(#1)|#2|/>`>`>`>/}}%
\def\Squareppp(#1)|#2|/#3`#4`#5`#6/{%
    \ifnextchar<{\Squarepppp(#1)|#2|/#3`#4`#5`#6/}%
    {\Squarepppp(#1)|#2|/#3`#4`#5`#6/<\default>}}%
\def\hsquarespppp(#1,#2)|#3|/#4/<#5>[#6;#7]{%
\Xpos=#1\Ypos=#2%
\def\next|##1##2##3##4##5##6##7|{%
 \def\Xa{##1}\def\Xb{##2}\def\Xc{##3}\def\Xd{##4}%
 \def\Xe{##5}\def\Xf{##6}\def\Xg{##7}}%
\next|#3|%
\def\next<##1,##2,##3>{\deltaX=##1\deltaXprime=##2\deltaY=##3}%
\next<#5>%
\def\next[##1`##2`##3`##4`##5`##6]{%
 \def\Nodea{##1}\def\Nodeb{##2}\def\Nodec{##3}%
 \def\Noded{##4}\def\Nodee{##5}\def\Nodef{##6}}%
\next[#6]%
\def\next[##1`##2`##3`##4`##5`##6`##7]{%
 \def\Labela{##1}\def\Labelb{##2}\def\Labelc{##3}\def\Labeld{##4}%
 \def\Labele{##5}\def\Labelf{##6}\def\Labelg{##7}}%
\next[#7]%
\dohsquares/#4/}%
\def\dohsquares/#1`#2`#3`#4`#5`#6`#7/{%
\squarepppp(\Xpos,\Ypos)|\Xa\Xc\Xd\Xf|/#1`#3`#4`#6/<\deltaX,\deltaY>%
 [\Nodea`\Nodeb`\Noded`\Nodee;\Labela`\Labelc`\Labeld`\Labelf]%
 \advance \Xpos by \deltaX%
\squarepppp(\Xpos,\Ypos)|\Xb\Xd\Xe\Xg|/#2``#5`#7/<\deltaXprime,\deltaY>%
[\Nodeb`\Nodec`\Nodee`\Nodef;\Labelb``\Labele`\Labelg]%
\ignorespaces}%
\def\hsquaresp(#1){\ifnextchar|{\hsquarespp(#1)}{\hsquarespp%
(#1)|aalmrbb|}}%
\def\hsquarespp(#1)|#2|{\ifnextchar/{\hsquaresppp(#1)|#2|}%
    {\hsquaresppp(#1)|#2|/>`>`>`>`>`>`>/}}%
\def\hsquaresppp(#1)|#2|/#3/{%
    \ifnextchar<{\hsquarespppp(#1)|#2|/#3/}%
    {\hsquarespppp(#1)|#2|/#3/<\default,\default,\default>}}%
\def\hSquarespppp(#1,#2)|#3|/#4/<#5>[#6;#7]{%
\Xpos=#1\Ypos=#2%
\def\next|##1##2##3##4##5##6##7|{%
 \def\Xa{##1}\def\Xb{##2}\def\Xc{##3}\def\Xd{##4}%
 \def\Xe{##5}\def\Xf{##6}\def\Xg{##7}}%
\next|#3|%
\deltaY=#5%
\def\next[##1`##2`##3`##4`##5`##6]{%
 \def\Nodea{##1}\def\Nodeb{##2}\def\Nodec{##3}%
 \def\Noded{##4}\def\Nodee{##5}\def\Nodef{##6}}%
\next[#6]%
\def\next[##1`##2`##3`##4`##5`##6`##7]{%
 \def\Labela{##1}\def\Labelb{##2}\def\Labelc{##3}\def\Labeld{##4}%
 \def\Labele{##5}\def\Labelf{##6}\def\Labelg{##7}}%
\next[#7]%
\dohSquares/#4/}%
\def\dohSquares/#1`#2`#3`#4`#5`#6`#7/{%
\Squarepppp(\Xpos,\Ypos)|\Xa\Xc\Xd\Xf|/#1`#3`#4`#6/<\deltaY>%
 [\Nodea`\Nodeb`\Noded`\Nodee;\Labela`\Labelc`\Labeld`\Labelf]%
 \advance \Xpos by \topw%
\Squarepppp(\Xpos,\Ypos)|\Xb\Xd\Xe\Xg|/#2``#5`#7/<\deltaY>%
[\Nodeb`\Nodec`\Nodee`\Nodef;\Labelb``\Labele`\Labelg]%
\ignorespaces}%
\def\hSquaresp(#1){\ifnextchar|{\hSquarespp(#1)}{\hSquarespp%
(#1)|aalmrbb|}}%
\def\hSquarespp(#1)|#2|{\ifnextchar/{\hSquaresppp(#1)|#2|}%
    {\hSquaresppp(#1)|#2|/>`>`>`>`>`>`>/}}%
\def\hSquaresppp(#1)|#2|/#3/{%
    \ifnextchar<{\hSquarespppp(#1)|#2|/#3/}%
    {\hSquarespppp(#1)|#2|/#3/<\default>}}%
\def\vsquarespppp(#1,#2)|#3|/#4/<#5>[#6;#7]{%
\Xpos=#1\Ypos=#2%
\def\next|##1##2##3##4##5##6##7|{%
 \def\Xa{##1}\def\Xb{##2}\def\Xc{##3}\def\Xd{##4}%
 \def\Xe{##5}\def\Xf{##6}\def\Xg{##7}}%
\next|#3|%
\def\next<##1,##2,##3>{\deltaX=##1\deltaY=##2\deltaYprime=##3}%
\next<#5>%
\def\next[##1`##2`##3`##4`##5`##6]{%
 \def\Nodea{##1}\def\Nodeb{##2}\def\Nodec{##3}%
 \def\Noded{##4}\def\Nodee{##5}\def\Nodef{##6}}%
\next[#6]%
\def\next[##1`##2`##3`##4`##5`##6`##7]{%
 \def\Labela{##1}\def\Labelb{##2}\def\Labelc{##3}\def\Labeld{##4}%
 \def\Labele{##5}\def\Labelf{##6}\def\Labelg{##7}}%
\next[#7]%
\dovsquares/#4/}%
\def\dovsquares/#1`#2`#3`#4`#5`#6`#7/{%
\squarepppp(\Xpos,\Ypos)|\Xd\Xe\Xf\Xg|/`#5`#6`#7/<\deltaX,\deltaYprime>%
[\Nodec`\Noded`\Nodee`\Nodef;`\Labele`\Labelf`\Labelg]%
 \advance\Ypos by \deltaYprime%
\squarepppp(\Xpos,\Ypos)|\Xa\Xb\Xc\Xd|/#1`#2`#3`#4/<\deltaX,\deltaY>%
 [\Nodea`\Nodeb`\Nodec`\Noded;\Labela`\Labelb`\Labelc`\Labeld]%
\ignorespaces}%
\def\vsquaresp(#1){\ifnextchar|{\vsquarespp(#1)}{\vsquarespp%
(#1)|aalmrbb|}}%
\def\vsquarespp(#1)|#2|{\ifnextchar/{\vsquaresppp(#1)|#2|}%
    {\vsquaresppp(#1)|#2|/>`>`>`>`>`>`>/}}%
\def\vsquaresppp(#1)|#2|/#3/{%
    \ifnextchar<{\vsquarespppp(#1)|#2|/#3/}%
    {\vsquarespppp(#1)|#2|/#3/<\default,\default,\default>}}%
\def\vSquarespppp(#1,#2)|#3|/#4/<#5,#6>[#7;#8]{%
\Xpos=#1\Ypos=#2%
\def\next|##1##2##3##4##5##6##7|{%
 \def\Xa{##1}\def\Xb{##2}\def\Xc{##3}\def\Xd{##4}%
 \def\Xe{##5}\def\Xf{##6}\def\Xg{##7}}%
\next|#3|%
\deltaX=#5%
\deltaY=#6%
\def\next[##1`##2`##3`##4`##5`##6]{%
 \def\Nodea{##1}\def\Nodeb{##2}\def\Nodec{##3}%
 \def\Noded{##4}\def\Nodee{##5}\def\Nodef{##6}}%
\next[#7]%
\def\next[##1`##2`##3`##4`##5`##6`##7]{%
 \def\Labela{##1}\def\Labelb{##2}\def\Labelc{##3}\def\Labeld{##4}%
 \def\Labele{##5}\def\Labelf{##6}\def\Labelg{##7}}%
\next[#8]%
\dovSquares/#4/\ignorespaces}%
\def\dovSquares/#1`#2`#3`#4`#5`#6`#7/{%
\setmorphismwidth{\Nodea}{\Nodeb}{\Labela}{\topw}%
\setmorphismwidth{\Nodec}{\Noded}{\Labeld}{\botw}%
\ratchet{\topw}{\botw}%
\setmorphismwidth{\Nodee}{\Nodef}{\Labelg}{\botw}%
\ratchet{\topw}{\botw}%
\square(\Xpos,\Ypos)|\Xd\Xe\Xf\Xg|/`#5`#6`#7/<\topw,\deltaY>%
 [\Nodec`\Noded`\Nodee`\Nodef;`\Labele`\Labelf`\Labelg]%
\advance \Ypos by \deltaY%
\square(\Xpos,\Ypos)|\Xa\Xb\Xc\Xd|/#1`#2`#3`#4/<\topw,\deltaX>%
 [\Nodea`\Nodeb`\Nodec`\Noded;\Labela`\Labelb`\Labelc`\Labeld]%
}%
\def\vSquaresp(#1){\ifnextchar|{\vSquarespp(#1)}{\vSquarespp%
(#1)|alrmlrb|}}%
\def\vSquarespp(#1)|#2|{\ifnextchar/{\vSquaresppp(#1)|#2|}%
    {\vSquaresppp(#1)|#2|/>`>`>`>`>`>`>/}}%
\def\vSquaresppp(#1)|#2|/#3/{%
    \ifnextchar<{\vSquarespppp(#1)|#2|/#3/}%
    {\vSquarespppp(#1)|#2|/#3/<\default,\default>}}%
\def\osquarepppp(#1)|#2|/#3`#4`#5`#6/<#7>[#8]{\squarepppp%
 (#1)|#2|/#3`#4`#5`#6/<#7>[#8]%
 \let\Nodea\nodea\let\Nodeb\nodeb%
\let\Nodec\nodec\let\Noded\noded\Xpos=\xpos\Ypos=\ypos%
\deltaX=\deltax \deltaY=\deltay \isquare}%
\def\osquarep(#1){\ifnextchar|{\osquarepp(#1)}{\osquarepp(#1)|alrb|}}%
\def\osquarepp(#1)|#2|{\ifnextchar/{\osquareppp(#1)|#2|}%
    {\osquareppp(#1)|#2|/>`>`>`>/}}%
\def\osquareppp(#1)|#2|/#3`#4`#5`#6/{%
    \ifnextchar<{\osquarepppp(#1)|#2|/#3`#4`#5`#6/}%
    {\osquarepppp(#1)|#2|/#3`#4`#5`#6/<1500,1500>}}%
\def\isquarepppp(#1)|#2|/#3`#4`#5`#6/<#7>[#8]{%
 \squarepppp(#1)|#2|/#3`#4`#5`#6/<#7>[#8]%
\ifnextchar|{\cubep}{\cubep|mmmm|}}%
\def\cubep|#1|{\ifnextchar/{\cubepp|#1|}{\cubepp|#1|/>`>`>`>/}}%
\def\isquare{\ifnextchar({\isquarep}{\isquarep(\default,\default)}}%
\def\isquarep(#1){\ifnextchar|{\isquarepp(#1)}{\isquarepp(#1)|alrb|}}%
\def\isquarepp(#1)|#2|{\ifnextchar/{\isquareppp(#1)|#2|}%
    {\isquareppp(#1)|#2|/>`>`>`>/}}%
\def\isquareppp(#1)|#2|/#3`#4`#5`#6/{%
    \ifnextchar<{\isquarepppp(#1)|#2|/#3`#4`#5`#6/}%
    {\isquarepppp(#1)|#2|/#3`#4`#5`#6/<500,500>}}%
\def\cubepp|#1#2#3#4|/#5`#6`#7`#8/[#9]{%
\def\next[##1`##2`##3`##4]{\gdef\Labela{##1}%
\gdef\Labelb{##2}\gdef\Labelc{##3}\gdef\Labeld{##4}}\next[#9]%
\xend\xpos \yend\ypos%
\Xend\xend\advance\Xend by -\Xpos%
\Yend\yend\advance\Yend by -\Ypos%
\domorphism(\Xpos,\Ypos)|#2|/#6/<\Xend,\Yend>[\Nodeb`\nodeb;\Labelb]%
\advance\Xpos by-\deltaX%
\advance\xend by-\deltax%
\Xend\xend\advance\Xend by -\Xpos%
\domorphism(\Xpos,\Ypos)|#1|/#5/<\Xend,\Yend>[\Nodea`\nodea;\Labela]%
\advance\Ypos by-\deltaY%
\advance\yend by-\deltay%
\Yend\yend\advance\Yend by -\Ypos%
\domorphism(\Xpos,\Ypos)|#3|/#7/<\Xend,\Yend>[\Nodec`\nodec;\Labelc]%
\advance\Xpos by\deltaX%
\advance\xend by\deltax%
\Xend\xend\advance\Xend by -\Xpos%
\domorphism(\Xpos,\Ypos)|#4|/#8/<\Xend,\Yend>[\Noded`\noded;\Labeld]%
\ignorespaces}%
\def\setwdth#1#2{\setbox0\hbox{$\labelstyle#1$}\wdth=\wd0%
\setbox0\hbox{$\labelstyle#2$}\ifnum\wdth<\wd0 \wdth=\wd0 \fi}%
\def\topppp/#1/<#2>^#3_#4{\:%
\ifnum#2=0%
   \setwdth{#3}{#4}\deltax=\wdth \divide \deltax by \ul%
   \advance \deltax by \defaultmargin  \ratchet{\deltax}{100}%
\else \deltax #2%
\fi%
\xy\ar@{#1}^{#3}_{#4}(\deltax,0) \endxy%
\:}%
\def\toppp/#1/<#2>^#3{\ifnextchar_{\topppp/#1/<#2>^{#3}}{\topppp/#1/<#2>^{#3}_{}}}%
\def\topp/#1/<#2>{\ifnextchar^{\toppp/#1/<#2>}{\toppp/#1/<#2>^{}}}%
\def\toop/#1/{\ifnextchar<{\topp/#1/}{\topp/#1/<0>}}%
\def\to{\ifnextchar/{\toop}{\toop/>/}}%
\def\rlimto{{%
\font\xyatipfont=xyatip10 scaled 800
\font\xybtipfont=xybtip10 scaled 800
\raise 2pt\hbox{\,\xy\ar@{->}(100,0) \endxy}\,}}
\def\llimto{{%
\font\xyatipfont=xyatip10 scaled 800
\font\xybtipfont=xybtip10 scaled 800
\raise 2pt\hbox{\,\xy\ar@{<-}(100,0) \endxy}\,}}
\def\twopppp/#1`#2/<#3>^#4_#5{\:%
\ifnum0=#3%
  \setwdth{#4}{#5}\deltax=\wdth \divide \deltax by \ul \advance \deltax%
  by \defaultmargin \ratchet{\deltax}{200}%
\else \deltax#3 \fi%
\xy\ar@{#1}@<2.5pt>^{#4}(\deltax,0)%
\ar@{#2}@<-2.5pt>_{#5}(\deltax,0)\endxy\:}%
\def\twoppp/#1`#2/<#3>^#4{\ifnextchar_{\twopppp/#1`#2/<#3>^{#4}}%
  {\twopppp/#1`#2/<#3>^{#4}_{}}}%
\def\twopp/#1`#2/<#3>{\ifnextchar^{\twoppp/#1`#2/<#3>}{\twoppp/#1`#2/<#3>^{}}}%
\def\twop/#1`#2/{\ifnextchar<{\twopp/#1`#2/}{\twopp/#1`#2/<0>}}%
\def\threeppppp/#1`#2`#3/<#4>^#5|#6_#7{\:%
\ifnum0=#4%
\setbox0\hbox{$\labelstyle#5$}\wdth=\wd0%
\setbox0\hbox{$\labelstyle#6$}\ifnum\wdth<\wd0 \wdth=\wd0 \fi%
\setbox0\hbox{$\labelstyle#7$}\ifnum\wdth<\wd0 \wdth=\wd0 \fi%
\deltax=\wdth \divide \deltax by \ul \advance \deltax by%
\defaultmargin \ratchet{\deltax}{300}%
\else\deltax#4 \fi%
    \xy \ifnum\wd0=0 \ar@{#2}(\deltax,0)%
    \else \ar@{#2}|{#6}(\deltax,0)\fi%
\ar@{#1}@<4.5pt>^{#5}(\deltax,0)%
\ar@{#3}@<-4.5pt>_{#7}(\deltax,0)\endxy\:}%
\def\threepppp/#1`#2`#3/<#4>^#5|#6{\ifnextchar_{\threeppppp%
  /#1`#2`#3/<#4>^{#5}|{#6}}{\threeppppp/#1`#2`#3/<#4>^{#5}|{#6}_{}}}%
\def\threeppp/#1`#2`#3/<#4>^#5{\ifnextchar|{\threepppp%
  /#1`#2`#3/<#4>^{#5}}{\threepppp/#1`#2`#3/<#4>^{#5}|{}}}%
\def\threepp/#1`#2`#3/<#4>{\ifnextchar^{\threeppp/#1`#2`#3/<#4>}%
  {\threeppp/#1`#2`#3/<#4>^{}}}%
\def\threep/#1`#2`#3/{\ifnextchar<{\threepp/#1`#2`#3/}%
  {\threepp/#1`#2`#3/<0>}}%
\def\twoar(#1,#2){{%
 \scalefactor{0.1}%
 \deltax#1\deltay#2%
 \deltaX=\ifnum\deltax<0-\fi\deltax%
 \deltaY=\ifnum\deltay<0-\fi\deltay%
 \Xend\deltax \multiply \Xend by \deltax%
 \Yend\deltay \multiply \Yend by \deltay%
 \advance\Xend by \Yend \multiply \Xend by 3%
 \ifnum \deltaX > \deltaY%
    \multiply \deltaX by 3 \advance \deltaX by \deltaY%
 \else%
    \multiply \deltaY by 3 \advance \deltaX by \deltaY%
 \fi%
 \multiply\deltax by 500%
 \multiply\deltay by 500%
 \xpos\deltax \multiply \xpos by 3 \divide\xpos by \deltaX%
 \Xpos\deltax \multiply \Xpos by \deltaX \divide \Xpos by \Xend%
 \advance \xpos by \Xpos%
 \ypos\deltay \multiply \ypos by 3 \divide\ypos by \deltaX%
 \Ypos\deltay \multiply \Ypos by \deltaX \divide \Ypos by \Xend%
 \advance \ypos by \Ypos%
 \xy \ar@{=>}(\xpos,\ypos) \endxy%
}\ignorespaces}%
\def\iiixiiipppppp(#1,#2)|#3|/#4/<#5>#6<#7>[#8;#9]{%
 \xpos#1\ypos#2\relax%
 \def\next|##1##2##3##4##5##6##7|{\def\xa{##1}\def\xb{##2}%
 \def\xc{##3}\def\xd{##4}\def\xe{##5}\def\xf{##6}\nextt|##7|}%
 \def\nextt|##1##2##3##4##5##6|{\def\xg{##1}\def\xh{##2}%
 \def\xi{##3}\def\xj{##4}\def\xk{##5}\def\xl{##6}}%
 \next|#3|%
 \def\next<##1,##2>{\deltax##1\deltay##2}%
 \next<#5>%
 \def\next<##1,##2>{\deltaX##1\deltaY##2}%
 \next<#7>%
 \def\next##1{\topw##1\relax%
 \ifodd\topw \def\za{}\else\def\za{\relax}\fi \divide\topw by 2
 \ifodd\topw \def\zb{}\else\def\zb{\relax}\fi \divide\topw by 2
 \ifodd\topw \def\zc{}\else\def\zc{\relax}\fi \divide\topw by 2
 \ifodd\topw \def\zd{}\else\def\zd{\relax}\fi \divide\topw by 2
 \ifodd\topw \def\ze{}\else\def\ze{\relax}\fi \divide\topw by 2
 \ifodd\topw \def\zf{}\else\def\zf{\relax}\fi \divide\topw by 2
 \ifodd\topw \def\zg{}\else\def\zg{\relax}\fi \divide\topw by 2
 \ifodd\topw \def\zh{}\else\def\zh{\relax}\fi \divide\topw by 2
 \ifodd\topw \def\zi{}\else\def\zi{\relax}\fi \divide\topw by 2
 \ifodd\topw \def\zj{}\else\def\zj{\relax}\fi \divide\topw by 2
 \ifodd\topw \def\zk{}\else\def\zk{\relax}\fi \divide\topw by 2
 \ifodd\topw \def\zl{}\else\def\zl{\relax}\fi}%
 \next{#6}%
 \def\next[##1`##2`##3`##4`##5`##6`##7`##8`##9]{%
 \def\nodeA{##1}\def\nodeB{##2}\def\nodeC{##3}%
 \def\nodeD{##4}\def\nodeE{##5}\def\nodeF{##6}%
 \def\nodeG{##7}\def\nodeH{##8}\def\nodeI{##9}}%
 \next[#8]%
 \def\next[##1`##2`##3`##4`##5`##6`##7]{%
 \def\labela{##1}\def\labelb{##2}\def\labelc{##3}%
 \def\labeld{##4}\def\labele{##5}\def\labelf{##6}\nextt[##7]}%
 \def\nextt[##1`##2`##3`##4`##5`##6]{%
 \def\labelg{##1}\def\labelh{##2}\def\labeli{##3}%
 \def\labelj{##4}\def\labelk{##5}\def\labell{##6}}%
 \next[#9]%
 \def\next/##1`##2`##3`##4`##5`##6`##7`##8/{%
 \advance\ypos\deltay
    \ifx\zf\empty \morphism(\xpos,\ypos)/<-/<-\deltaX,0>[\nodeD`0;]\fi
 \morphism(\xpos,\ypos)|\xf|/{##6}/<\deltax,0>[\nodeD`\nodeE;\labelf]%
    \advance \xpos\deltax
    \morphism(\xpos,\ypos)|\xg|/{##7}/<\deltax,0>[\nodeE`\nodeF;\labelg]%
    \ifx\zg\empty \advance\xpos \deltax
        \morphism(\xpos,\ypos)<\deltaX,0>[\nodeF`0;]\fi
    \xpos#1 \advance\ypos\deltay
    \ifx\zd\empty \morphism(\xpos,\ypos)/<-/<-\deltaX,0>[\nodeA`0;]\fi
    \ifx\za\empty \morphism(\xpos,\ypos)/<-/<0,\deltaY>[\nodeA`0;]\fi
    \morphism(\xpos,\ypos)|\xa|/{##1}/<\deltax,0>[\nodeA`\nodeB;\labela]%
 \morphism(\xpos,\ypos)|\xc|/{##3}/<0,-\deltay>[\nodeA`\nodeD;\labelc]%
    \advance \xpos\deltax
     \morphism(\xpos,\ypos)|\xb|/{##2}/<\deltax,0>[\nodeB`\nodeC;\labelb]%
     \morphism(\xpos,\ypos)|\xd|/{##4}/<0,-\deltay>[\nodeB`\nodeE;\labeld]%
     \ifx\zb\empty \morphism(\xpos,\ypos)/<-/<0,\deltaY>[\nodeB`0;]\fi
     \advance\xpos\deltax
 \morphism(\xpos,\ypos)|\xe|/{##5}/<0,-\deltay>[\nodeC`\nodeF;\labele]%
     \ifx\zc\empty \morphism(\xpos,\ypos)/<-/<0,\deltaY>[\nodeC`0;]\fi
     \ifx\ze\empty \morphism(\xpos,\ypos)<\deltaX,0>[\nodeC`0;]\fi
   \nextt/##8/}%
 \def\nextt/##1`##2`##3`##4`##5/{%
 \xpos#1\ypos#2\relax%
   \ifx\zh\empty \morphism(\xpos,\ypos)/<-/<-\deltaX,0>[\nodeG`0;]\fi
   \ifx\zj\empty \morphism(\xpos,\ypos)<0,-\deltaY>[\nodeG`0;]\fi
   \morphism(\xpos,\ypos)|\xk|/{##4}/<\deltax,0>[\nodeG`\nodeH;\labelk]%
   \advance\xpos\deltax
   \morphism(\xpos,\ypos)|\xl|/{##5}/<\deltax,0>[\nodeH`\nodeI;\labell]%
   \ifx\zk\empty \morphism(\xpos,\ypos)<0,-\deltaY>[\nodeH`0;]\fi
   \advance\xpos\deltax
   \ifx\zi\empty \morphism(\xpos,\ypos)<\deltaX,0>[\nodeI`0;]\fi
   \ifx\zl\empty \morphism(\xpos,\ypos)<0,-\deltaY>[\nodeI`0;]\fi
   \xpos#1 \advance\ypos\deltay
    \morphism(\xpos,\ypos)|\xh|/{##1}/<0,-\deltay>[\nodeD`\nodeG;\labelh]%
    \advance \xpos\deltax
    \morphism(\xpos,\ypos)|\xi|/{##2}/<0,-\deltay>[\nodeE`\nodeH;\labeli]%
    \advance \xpos\deltax
 \morphism(\xpos,\ypos)|\xj|/{##3}/<0,-\deltay>[\nodeF`\nodeI;\labelj]}%
 \next/#4/\ignorespaces}%
\def\iiixiiip(#1){\ifnextchar|{\iiixiiipp(#1)}%
  {\iiixiiipp(#1)|aalmrmmlmrbb|}}%
\def\iiixiiipp(#1)|#2|{\ifnextchar/{\iiixiiippp(#1)|#2|}%
    {\iiixiiippp(#1)|#2|/>`>`>`>`>`>`>`>`>`>`>`>/}}%
\def\iiixiiippp(#1)|#2|/#3/{%
    \ifnextchar<{\iiixiiipppp(#1)|#2|/#3/}%
    {\iiixiiipppp(#1)|#2|/#3/<\default,\default>}}%
\def\iiixiiipppp(#1)|#2|/#3/<#4>{\ifnextchar[{\iiixiiippppp(#1)|#2|/#3/%
   <#4>0<0,0>}{\iiixiiippppp(#1)|#2|/#3/<#4>}}%
\def\iiixiiippppp(#1)|#2|/#3/<#4>#5{\ifnextchar<%
   {\iiixiiipppppp(#1)|#2|/#3/<#4>{#5}}%
   {\iiixiiipppppp(#1)|#2|/#3/<#4>{#5}<400,400>}}%
\def\iiixiipppppp(#1,#2)|#3|/#4/<#5>#6<#7>[#8;#9]{%
 \xpos#1\ypos#2\relax%
 \def\next|##1##2##3##4##5##6##7|{\def\xa{##1}\def\xb{##2}%
 \def\xc{##3}\def\xd{##4}\def\xe{##5}\def\xf{##6}\def\xg{##7}}%
 \next|#3|%
 \def\next<##1,##2>{\deltax##1\deltay##2}%
 \next<#5>%
 \deltaX#7
 \topw#6
 \def\next{%
 \ifodd\topw \def\za{}\else\def\za{\relax}\fi \divide\topw by 2
 \ifodd\topw \def\zb{}\else\def\zb{\relax}\fi \divide\topw by 2
 \ifodd\topw \def\zc{}\else\def\zc{\relax}\fi \divide\topw by 2
 \ifodd\topw \def\zd{}\else\def\zd{\relax}\fi}%
 \next%
 \def\next[##1`##2`##3`##4`##5`##6]{%
 \def\nodea{##1}\def\nodeb{##2}\def\nodec{##3}%
 \def\noded{##4}\def\nodee{##5}\def\nodef{##6}}%
 \next[#8]%
 \def\next[##1`##2`##3`##4`##5`##6`##7]{%
 \def\labela{##1}\def\labelb{##2}\def\labelc{##3}%
 \def\labeld{##4}\def\labele{##5}\def\labelf{##6}\def\labelg{##7}}%
 \next[#9]%
 \def\next/##1`##2`##3`##4`##5`##6`##7/{%
 {\ifx\zc\empty\advance\xpos -\deltaX
\relax\morphism(\xpos,\ypos)<\deltaX,0>[0`\noded;]\fi}%
 \morphism(\xpos,\ypos)|\xf|/##6/<\deltax,0>[\noded`\nodee;\labelf]%
 \advance\xpos by \deltax%
 \morphism(\xpos,\ypos)|\xg|/##7/<\deltax,0>[\nodee`\nodef;\labelg]%
 {\ifx\zd\empty \advance\xpos by \deltax
\relax  \morphism(\xpos,\ypos)<\deltaX,0>[\nodef`0;]\fi}%
 \advance\xpos by -\deltax  \advance\ypos by \deltay
 {\ifx\za\empty\advance \xpos by -\deltaX
\relax\morphism(\xpos,\ypos)<\deltaX,0>[0`\nodea;]\fi}%
 \morphism(\xpos,\ypos)|\xa|/##1/<\deltax,0>[\nodea`\nodeb;\labela]%
 \morphism(\xpos,\ypos)|\xc|/##3/<0,-\deltay>[\nodea`\noded;\labelc]%
 \advance\xpos by \deltax%
 \morphism(\xpos,\ypos)|\xb|/##2/<\deltax,0>[\nodeb`\nodec;\labelb]%
 \morphism(\xpos,\ypos)|\xd|/##4/<0,-\deltay>[\nodeb`\nodee;\labeld]%
 \advance\xpos by \deltax%
 \morphism(\xpos,\ypos)|\xe|/##5/<0,-\deltay>[\nodec`\nodef;\labele]%
 \ifx\zb\empty\relax \morphism(\xpos,\ypos)<\deltaX,0>[\nodec`0;]\fi}%
 \next/#4/\ignorespaces}%
\def\iiixiip(#1){\ifnextchar|{\iiixiipp(#1)}%
  {\iiixiipp(#1)|aalmrbb|}}%
\def\iiixiipp(#1)|#2|{\ifnextchar/{\iiixiippp(#1)|#2|}%
    {\iiixiippp(#1)|#2|/>`>`>`>`>`>`>/}}%
\def\iiixiippp(#1)|#2|/#3/{%
    \ifnextchar<{\iiixiipppp(#1)|#2|/#3/}%
    {\iiixiipppp(#1)|#2|/#3/<\default,\default>}}%
\def\iiixiipppp(#1)|#2|/#3/<#4>{\ifnextchar[{\iiixiippppp(#1)|#2|/#3/%
   <#4>{0}<0>}{\iiixiippppp(#1)|#2|/#3/<#4>}}%
\def\iiixiippppp(#1)|#2|/#3/<#4>#5{\ifnextchar<%
   {\iiixiipppppp(#1)|#2|/#3/<#4>{#5}}%
   {\iiixiipppppp(#1)|#2|/#3/<#4>{#5}<400>}}%
\def\node#1(#2,#3)[#4]{%
\expandafter\gdef\csname x@#1\endcsname{#2}%
\expandafter\gdef\csname y@#1\endcsname{#3}%
\expandafter\gdef\csname ob@#1\endcsname{#4}%
\ignorespaces}%
\def\arrow{\ifnextchar|{\arrowp}{\arrowp|a|}}%
\def\arrowp|#1|{\ifnextchar/{\arrowpp|#1|}{\arrowpp|#1|/>/}}%
\def\arrowpp|#1|/#2/[#3`#4;#5]{%
\xfinish=\csname x@#4\endcsname%
\yfinish=\csname y@#4\endcsname%
\advance\xfinish by -\csname x@#3\endcsname%
\advance\yfinish by -\csname y@#3\endcsname%
\morphism(\csname x@#3\endcsname,\csname y@#3\endcsname)|#1|/{#2}/%
<\xfinish,\yfinish>[\csname ob@#3\endcsname`\csname ob@#4\endcsname;#5]%
}%
\def\Loop(#1,#2)#3(#4,#5){\POS(#1,#2)*+!!<0ex,\axis>{#3}\ar@(#4,#5)}%
\def\iloop#1(#2,#3){\xy\Loop(0,0)#1(#2,#3)\endxy}%
     \let \PATHafterPOS\PATHafterPOS@default%
     \let \arsavedPATHafterPOS@@\relax%
     \let\afterar@@\relax%
\xydef@\endxyobj{\if\inxy@\else\xyerror@{Unexpected \string\endxy}{}\fi%
>  \relax%
>   \dimen@=\Y@max \advance\dimen@-\Y@min%
>   \ifdim\dimen@<\z@ \dimen@=\z@ \Y@min=\z@ \Y@max=\z@ \fi%
>   \dimen@=\X@max \advance\dimen@-\X@min%
>   \ifdim\dimen@<\z@ \dimen@=\z@ \X@min=\z@ \X@max=\z@ \fi%
>   \edef\tmp@{\egroup%
>     \setboxz@h{\kern-\the\X@min \boxz@}%
>     \ht\z@=\the\Y@max \dp\z@=-\the\Y@min \wdz@=\the\dimen@%
>     \noexpand\maybeunraise@ \raise\dimen@\boxz@%
>     \noexpand\recoverXyStyle@ \egroup \noexpand\xy@end%
>     \U@c=\the\Y@max \advance\U@c-\the\Y@c%
>     \D@c=-\the\Y@min \advance\D@c\the\Y@c%
>     \L@c=-\the\X@min  \advance\L@c\the\X@c%
>     \R@c=\the\X@max  \advance\R@c-\the\X@c%
>    }\tmp@}%
\gdef\xymerge@MinMax{}%
\xydef@\twocell{\hbox\bgroup\xysave@MinMax\@twocell}%
\xydef@\uppertwocell{\hbox\bgroup\xysave@MinMax\@uppertwocell}%
\xydef@\lowertwocell{\hbox\bgroup\xysave@MinMax\@lowertwocell}%
\xydef@\compositemap{\hbox\bgroup\xysave@MinMax\@compositemap}%
\xydef@\xysave@MinMax{\xdef\xymerge@MinMax{%
   \noexpand\ifdim\X@max<\the\X@max \X@max=\the\X@max\noexpand\fi%
   \noexpand\ifdim\X@min>\the\X@min \X@min=\the\X@min\noexpand\fi%
   \noexpand\ifdim\Y@max<\the\Y@max \Y@max=\the\Y@max\noexpand\fi%
   \noexpand\ifdim\Y@min>\the\Y@min \Y@min=\the\Y@min\noexpand\fi%
  }}%
\xydef@\drop@Twocell{\boxz@ \xymerge@MinMax}%
\xydef@\twocell@DONE{%
  \edef\tmp@{\egroup%
   \X@min=\the\X@min \X@max=\the\X@max%
   \Y@min=\the\Y@min \Y@max=\the\Y@max}\tmp@%
  \L@c=\X@c \advance\L@c-\X@min \R@c=\X@max \advance\R@c-\X@c%
  \D@c=\Y@c \advance\D@c-\Y@min \U@c=\Y@max \advance\U@c-\Y@c%
  \ht\z@=\U@c \dp\z@=\D@c \dimen@=\L@c \advance\dimen@\R@c \wdz@=\dimen@%
  \computeLeftUpness@%
  \setboxz@h{\kern-\X@p \raise-\Y@c\boxz@ }%
  \dimen@=\L@c \advance\dimen@\R@c \wdz@=\dimen@ \ht\z@=\U@c \dp\z@=\D@c%
  \Edge@c={\rectangleEdge}\Invisible@false \Hidden@false%
  \edef\Drop@@{\noexpand\drop@Twocell%
   \noexpand\def\noexpand\Leftness@{\Leftness@}%
   \noexpand\def\noexpand\Upness@{\Upness@}}%
  \edef\Connect@@{\noexpand\connect@Twocell%
   \noexpand\ifdim\X@max<\the\X@max \X@max=\the\X@max\noexpand\fi%
   \noexpand\ifdim\X@min>\the\X@min \X@min=\the\X@min\noexpand\fi%
   \noexpand\ifdim\Y@max<\the\Y@max \Y@max=\the\Y@max\noexpand\fi%
   \noexpand\ifdim\Y@min>\the\Y@min \Y@min=\the\Y@min\noexpand\fi }%
  \xymerge@MinMax%
}%
\newcommand{\op}[1]{\ensuremath{{#1}^\mathrm{op}}}
\newcommand{\thry}[1]{\ensuremath{\mathbb{#1}}}
\newcommand{\xalg}[1]{\ensuremath{\mathbf{#1}}}
\newcommand{\set}{\ensuremath{{Set}}}
\newcommand{\Rel}[1]{\ensuremath{\mathrm{Rel}(#1)}}
\newcommand{\instans}[2]{\ensuremath{#1\! :\! #2 \to<150>\mathbf{FinSet}}}
\newcommand{\kaninst}[1]{\ensuremath{1_#1}}
\newcommand{\kanskje}[2]{\ensuremath{\int_{#1}#2}}
\newcommand{\Trm}[2]{\ensuremath{\mathrm{Trm}_{#1}(#2)}}
\newcommand{\bb}[1]{\ensuremath{\left\llbracket#1 \right\rrbracket}}
\newcommand{\pair}[1]{\ensuremath{\left\langle {#1} \right\rangle}}
\newcommand{\cterm}[2]{\ensuremath{\left \{ {#1}\ \; \vrule \; \ {#2}\right \}}}
\newcommand{\kortinldpil}[3]{\ensuremath{#1\! :\! #2 \to/=>/<100>  #3 }}
\newcommand{\kortinlpil}[3]{\ensuremath{#1\! :\! #2 \to<150>  #3 }}
\newcommand\eval{\mathord{\uparrow}}
\newcommand{\showelim}{false}
\newcommand{\elim}[1]{ \ifthenelse{\equal{\showelim}{true} }{#1}{}}
\newcounter{myfigure}
\newcommand\Dthanks[1]{%
  \begingroup
  \renewcommand\thefootnote{}\footnote{#1}%
  \addtocounter{footnote}{-1}%
  \endgroup
}
\begin{document}
\author{Henrik Forssell, Håkon Robbestad Gylterud, David I. Spivak}

\institute{Stockholm University, Stockholm University, MIT}

\title{Type theoretical databases}

\maketitle
\Dthanks{\footnotesize Spivak acknowledges support from ONR grant N000141310260 and AFOSR grant FA9550-14-1-0031.}

 \begin{abstract}
  
We present a soundness theorem for a dependent type theory with context constants with respect to an indexed category of (finite, abstract) simplical complexes. The point of interest for computer science is that this category can be seen to represent tables in a natural way. Thus the category is a model for databases, a single mathematical structure in which all database schemas and instances (of a suitable, but sufficiently general form) are represented.  The type theory then allows for the specification of database schemas and instances, the manipulation of the same with the usual type-theoretic operations, and the posing of queries. 
  
 \end{abstract}

\section{Introduction}

Databases being, essentially,  collections of (possibly interrelated) tables of data, a foundational question is how to best represent such collections of tables mathematically in order to study their properties and  ways of manipulating them. The relational model, essentially treating tables as structures of first-order relational signatures, is a simple and powerful representation, the virtues of which we need not recapitulate. Nevertheless, areas exist in which the relational model is less adequate than in others. One familiar example is the question of how to represent partially filled out rows or missing information\footnote{see e.g.\ {\tt http://thethirdmanifesto.com/}}. Another, more fundamental perhaps, is how to relate instances of different schemas, as opposed to the relatively well understood relations between instances of the same schema. As comparison and mappings between data structured in different ways is an area of some importance for database theorists, for example in the settings of \cite{faginetal_dataexchange:2005}, \cite{lenzerini_dataiotegration:2002}, this suggests looking for alternative and supplemental ways of modeling tables more suitable to such ``dynamic'' settings. It seems natural, in that case, to try to model tables of different shapes as living in a single mathematical structure, facilitating  their manipulation across different schemas.

Formally, this paper presents a soundness theorem (Theorem \ref{theorem:soundness}) for a certain dependent type theory with respect to a rather simple category of (finite, abstract) simplicial complexes. The novelty is that the type theory has context constants, mirroring that our choice of ``display maps'' does not include all maps to the terminal object. From the database perspective, however, the interesting aspect is that this category can in a natural way be seen as a category of tables; collecting in a single mathematical structure---an \emph{indexed} or \emph{fibered} category---the totality of schemas and instances. 

This representation can be introduced as follows. Let a schema $S$ be presented as a finite set ${\bf A}$ of attributes and a set of relation variables over those attributes. One way of allowing for partially filled out rows is to assume that whenever the schema has a relation variable $R$, say over attributes $A_0,\ldots,A_n$, it also has relation variables over all non-empty subsets of $\{A_0,\ldots,A_n\}$. So a partially filled out row over $R$ is a full row over such a ``sub-relation'' of $R$. To this we add the requirement that the schema does not have two relation variables over exactly the same attributes\footnote{Coming from reasons having to do with simplicity and wanting to stay close to the view of tables as relations, the conceptual justification for this requirement is that if you have two tables with exactly the same column names, it should be natural to either collect them into one table or to rename some of the column names.}.  This requirement means that a relation variable can be identified with the set of its attributes, and together with the first requirement, this means that the schema can be seen as a downward closed sub-poset of the positive power set of the set of attributes ${\bf A}$. Thus a schema is an (abstract) \emph{simplicial complex}---a combinatorial and geometric object familiar from algebraic topology.

The key observation is now that an instance of the schema $S$ can also be regarded as a simplicial complex, by regarding the data as attributes and the tuples as relation variables. Accordingly, an instance over $S$ is a schema of its own, and the fact that it is an instance of $S$ is ``displayed'' by a certain projection to $S$. 
Thus the (structurally rich) category \thry{S} of finite simplicial complexes and morphisms between  them form a category of schemas which includes, at the same time, all instances of those schemas, the connection between schema and instance given by a collection of maps in \thry{S} called \emph{display} maps. 

As such, \thry{S} together with this collection $D$ of maps form a so-called \emph{display  map category} \cite{jacobs:99}, a notion originally developed in connection with categorical models of dependent type theory.   We show that $(\thry{S},D)$ is a model of a certain dependent type theory including the usual type-forming operations and with context constants. We indicate how any schema and instance (satisfying the above two requirements) can be specified in this type theory, and how the type-forming operations can then be used to build new instances and schemas (corresponding for instance to natural join and disjoint union), and thus to pose queries. Syntactically, that is in the type theory, schemas and instances are specified in the same way, in terms of  types and terms over a set of distinguished set of contexts corresponding to---and thus reflecting the special status of---single relation variable schemas (or \emph{relation schemas} $R[n]$ in the terminology of \cite{abiteboulhullvianu:95}).   

We focus, in the space available here, on the presentation of the model, the operations, the type theory, and the soundness theorem. Section \ref{section: the model} presents the model as an indexed category, defines the notion of a display morphism in this context, and shows how to pass from schema to instance and from instance to schema (and display morphism). Section \ref{Subsubsection: Examples} gives some  brief examples of schemas and instances as simplicial complexes (which we also use further down in Example \ref{Example: natural join} involving natural join, Section \ref{subsection:Instance specification as type introduction} concerning schema and instance specification, and Example \ref{example:queries} concerning queries).  Section \ref{Subsection: simplicial databases} present the semantic operations on schemas and instances interpreting the type theoretic operations, which in turn are presented together with the soundness theorem in Section \ref{Section: The type theory}. Future work will present expressivity and complexity analyses, the formulation of dependencies and constraints, as well as more formally presenting the relation to relational databases and to ``real'' instances (rather than the more ``structural'' instances we study here). Future work also includes exploiting the more geometric perspective on tables that this models offers (see \cite{spivak:simpdb}), and the modeling of schemas with multiple relation variables over the same attributes and instances with multiple keys representing the same data. 

Only knowledge of the very basic notions of category theory, such as category, functor, and natural transformation, is assumed.


\section{The model}\label{section: the model}

\subsection{Finite simplicial complexes and schemas}
\label{subsection: Finite simplicial complexes and schemas}

We consider both schemas and instances as (abstract, finite) simplicial complexes, with a certain family of maps ``displaying'' that a simplicial complex---the source of the map---is an instance of another complex---the target of the map---seen as a schema. We start with formal definitions of simplicial complexes, the corresponding \emph{simplicial schemas}, and the display maps. In what follows, all posets etc.\ are finite unless explicitly stated otherwise.

\begin{definition}\label{Def: based poset}
\begin{enumerate}
\item Let $X$ be a poset. A subset $B\subseteq X$ is called a {\em basis of $X$} if the following hold:
\begin{enumerate}
\item for all $x,y\in X$, one has $x\leq y \text{ if and only if } B_{\leq x}\subseteq B_{\leq y}$, where $B_{\leq x}=(\downarrow x)\cap B=\cterm{z\in B}{z\leq x}$;
\item no two elements of $B$ are comparable, i.e. for all $g,h\in B$ one has $g\not\leq h$; and
\item every element $x\in X$ has generators, i.e. $B_{\leq x}\neq\emptyset$.
\end{enumerate}
If $X$ has a basis, one sees easily that the basis is unique, and we say that $X$ is a {\em based poset}.

\item\label{Item: Based poset} Let $X$ be a based poset with basis $B$. Then define $X_n:=\cterm{x\in X}{|B_{\leq x}|=n+1}$. In particular, $X_0=B$.

\item\label{Item: simplicial set} A based poset $X$ is called a \emph{simplicial complex} if for all $x\in X$ and $Y\subseteq B_{\leq x}$ there exists $y\in X$ such that $B_{\leq y}=Y$. 

\item\label{Item: simplicial schema} A  poset $X$ is called a \emph{simplicial schema} if $\op{X}$---the poset obtained by reversing the ordering---is a simplicial complex. The elements of $X_0$ are called \emph{attributes} and the elements of $X_{n+1}$ are called relation variables, relation keys, or simply \emph{relations}.  

We consider a simplicial schema as a category and use arrows $\delta^x_y:x\to<150> y$ to indicate order. Thus the arrow $\delta^x_y$ exists iff $y\leq x$ in the simplicial complex \op{X}. We reserve the use of arrows to indicate order in the schema $X$ and $\leq$ to indicate the order in the  complex \op{X}. We use the notation $B_{\leq x}$ also in connection with schemas, where it means, accordingly, the set of attributes $A$ such that there is an arrow $\delta^x_A$. 

\item\label{Item: BPos morphism} Suppose that $X$ and $Y$ are based posets with bases $B$ and $C$ respectively. A (poset) morphism $f: X\to<150> Y$ is called {\em based} if for all $x\in X$, we have $f(B_{\leq x})=C_{\leq f(x)}$.  Let $\thry{S}$ be the category consisting of simplicial schemas and functors (poset morphisms) $f: X\to<150> Y$ such that $f: \op{X}\to<150> \op{Y}$ is a based poset morphism. Note that based poset morphisms are completely determined by their restriction to the basis. 

\item A morphism $f: X\to<150> Y$ of simplicial schemas is a \emph{display map} if $f$ restricts to a family of maps $f_n:X_n\rightarrow Y_n$ \textup{(}one could say that it `preserves dimension'\textup{)}. It is straightforward to see that this is equivalent to the condition that for all $x\in \op{X}$ the restriction $f\upharpoonright_{(\downarrow x)}: (\downarrow x)\rightarrow (\downarrow f(x))$ is an isomorphism of sets \textup{(}equivalently, of simplicial complexes\textup{)}. 

\end{enumerate}
\end{definition}

With respect to the usual notion of schema, a simplicial schema  $X$ can be thought of as given in the usual way by a finite set of attributes $X_0=\{A_0,\ldots,A_{n-1}\}$ and a set of relational variables $X=\{R_0,\ldots R_{m-1}\}$, each with a specification of column names in the form of a subset of $X_0$, but with the restrictions 1) that no two relation variables are over exactly the same attributes; and 2) for any (nonempty) subset of the attributes of a relation variable there exists a relation variable over (exactly) those attributes. In the sequel we shall drop the word ``simplicial'' and simply say ``complex'' and ``schema''. Any complex (and hence schema) can be seen as a subposet of a finite positive power set which is downward closed and contains all singletons via the injective function $X\to<150> P_+(X_0)$ defined by $x\mapsto B_{\leq x}$.  We freely use this perspective when convenient.

 The category \thry{S} contains in particular the $n$-simplices $\Delta_n$ and the face maps. Recall that the the $n$-simplex $\Delta_n$ is the complex given by the full positive power set on $[n]=\{0,\ldots,n\}$. A face map $d^n_i: \Delta_n\to<150>\Delta_{n+1}$ between two simplices is the based poset morphism defined by $k\mapsto k$, if $k<i$ and $k\mapsto k+1$ else. These satisfy the simplicial identity $d^{n+1}_i\circ  d^{n}_j = d^{n+1}_{j-1}\circ d^n_i$ if $i < j$.  As a schema,  $\Delta_n$ is the schema of a single relation on $n+1$ attributes named by numbers $0,\ldots,n$ (and all its ``generated'' sub-relations). A face map $d^n_i: \Delta_n\to<150>\Delta_{n+1}$ is the inclusion of the relation $[n+1] - \{i\}$  into $[n+1]$. These schemas and morphisms play a special role in Section \ref{Section: The type theory} where they are used to specify general schemas and instances.

\subsection{Relational instances}\label{subsection: relational instances}

Let $X$ be a (simplicial) schema, say with attributes $X_0=\{A_0,\ldots,A_{n-1}\}$.  A functor $F:X\to \xalg{FinSet}$ from $X$ to the category of finite sets and functions can be regarded as an instance of the schema $X$. 
For $x=\{A_{i_0},\dots,A_{i_{m-1}}\}\in X$, the set $F(x)$ can be regarded as a set of ``keys'' (or ``facts'' or ``row-names''). The ``value'' (or ``data''\footnote{Strictly speaking, ``data'' is somewhat misleading, as this notion of instance treats elements of, say, $F(A_0)$ and $F(A_1)$ as formally distinct. For example, the instances $F(A_0)=\{a\}$, $F(A_1)=\{b\}$  and $G(A_0)=\{a\}$, $G(A_1)=\{a\}$, of the schema $A_0,A_1$ with no relations, are isomorphic. An actual filling out of a table with data can be given as a function from, in this case,  the disjoint union $F(A_0)+F(A_1)$ to a domain of strings and numbers, say. We leave this extra level of structure to future work, and restrict attention to our more abstract notion of instance.}) $k[A_{i_j}]$ of such a key $k\in F(x)$ at attribute $A_{i_j}$ is then the element $k[A_{i_j}]:=F(d^x_{A_{i_j}})(k)$. Accordingly, $F(x)$ maps to the set of tuples $F(A_{i_0})\times\ldots\times F(A_{i_{m-1}})$ by $k\mapsto \pair{k[A_{i_0}],\ldots,k[A_{i_{m-1}}]}$.  For arbitrary $F$, this function is not 1--1, that is, there can be distinct keys with the same values at all attributes. We say that $F$ is a \emph{relational instance} if this does not happen:
\begin{definition}\label{Definition: Relational instances}
Let $X$ be a schema. A \emph{relational instance} is a functor  $F:X\to \xalg{FinSet}$ such that for all $x\in X$ the functions \cterm{\delta^{x}_{A}}{A\in B_{\leq x}} are jointly injective. Let \Rel{X} be the category of relational instances and natural transformations between them.
\end{definition}
We restrict interest in this paper to relational instances and refer to them in the sequel simply as \emph{instances}. It is clear that every relational instance is isomorphic to one where the keys are actually tuples, and we make use of this in Section \ref{Subsection: simplicial databases}.  
%
The relation between the condition that schemas have at most one relation variable over a set of attributes and the restriction to relational instances are displayed in the following: Let $X$ be a schema and $F:X\to<150>\xalg{FinSet}$ an arbitrary functor.  Recall, e.g.\ from \cite{maclane:98}, that the category of elements $\int_{{X}}F$ has objects \pair{x,a} with $x\in X$ and   $a\in F(x)$. A morphism $\delta^{\pair{x,a}}_{\pair{y,b}}: \pair{x,a}\to<150>\pair{y,b}$ is a morphism $\delta^{x}_{y}: x\to<150>y$ with $F(\delta^{x}_{y})(a)=b$. The projection $p:\kanskje{X}{F}\to<150> X$ is defined by $\pair{x,a} \mapsto x$ and $\delta^{\pair{x,a}}_{\pair{y,b}}\mapsto \delta^{x}_{y}$. We then have  

\begin{lemma}\label{Lemma: relational instances are based posets}
Let $X$ be a (simplicial) schema  and $F:X\to\xalg{FinSet}$ be a functor. Then $\int_X F$ is a (simplicial) schema  and $p:\int_X F\rightarrow X$ a display morphism if and only if $F$ is a relational instance.
\end{lemma}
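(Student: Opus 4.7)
The plan is to identify the candidate basis of $\op{\int_X F}$ and check the basis and simplicial-complex axioms one by one, pinpointing the relational condition as the content of a single step. Take the candidate basis $B' = \{\langle A, a\rangle : A \in X_0,\, a \in F(A)\}$. Unwinding the definition of morphisms in $\int_X F$ and using functoriality of $F$, for each object $\langle x,a\rangle$ the assignment $A\mapsto \langle A, F(\delta^x_A)(a)\rangle$ is a bijection $B_{\leq x}\to B'_{\leq\langle x,a\rangle}$ which $p$ sends to the identity on attributes.

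Along this bijection, basis axioms (b) (incomparability) and (c) (existence of generators), the simplicial-complex closure condition, and the forward direction of basis axiom (a) all transport routinely from the corresponding properties of $\op X$; in particular the forward direction of (a) is just $\delta^x_A = \delta^y_A\circ \delta^x_y$ applied under $F$. The crux is the backward direction of (a): assume $B'_{\leq\langle y,b\rangle}\subseteq B'_{\leq\langle x,a\rangle}$. Applying $p$ yields $B_{\leq y}\subseteq B_{\leq x}$ and hence $y\leq x$ in $\op X$ by axiom (a) for $\op X$, so $\delta^x_y$ exists in the schema. The assumed inclusion, together with $\delta^x_A = \delta^y_A\circ \delta^x_y$, forces $F(\delta^y_A)(b) = F(\delta^y_A)(F(\delta^x_y)(a))$ for every $A\in B_{\leq y}$. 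This is precisely the hypothesis under which the relational condition (joint injectivity of $\{F(\delta^y_A)\}_{A\in B_{\leq y}}$) forces $b = F(\delta^x_y)(a)$, producing the required morphism $\langle x,a\rangle\to\langle y,b\rangle$ in $\int_X F$. Once $\int_X F$ is known to be a simplicial schema, $p$ being a display morphism is immediate: the bijection above preserves cardinality, so $p$ preserves dimension.

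For the converse, suppose $F$ is not relational; then there exist $y\in X$ and distinct $b_1,b_2\in F(y)$ with $F(\delta^y_A)(b_1) = F(\delta^y_A)(b_2)$ for every $A\in B_{\leq y}$. The objects $\langle y,b_1\rangle$ and $\langle y,b_2\rangle$ of $\int_X F$ are distinct but have identical base-below sets; any morphism between them in $\int_X F$ would lie over $1_y$ and so force $b_1=b_2$, so they are incomparable. Thus axiom (a) fails and $\int_X F$ cannot be a simplicial schema. The main care point, rather than a real obstacle, is the opposite-order bookkeeping between a schema $X$ and its underlying complex $\op X$; once that is in hand, both halves of the equivalence drop out of the single observation that the relational condition corresponds exactly to the backward direction of basis axiom (a) in $\op{\int_X F}$.
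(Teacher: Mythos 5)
Your proof is correct and takes essentially the same route as the paper's: identify the candidate basis of $\op{(\int_X F)}$ as the attribute--value pairs, transport the based-poset and simplicial-complex axioms along the fibrewise bijection $B_{\leq x}\to B'_{\leq\langle x,a\rangle}$, isolate relationality as exactly what the backward direction of basis axiom (a) requires, and read off displayness from the preservation of $|B_{\leq x}|$. You fill in the details the paper dismisses as ``easy to see,'' and you correctly locate the failure in the non-relational case at axiom (a) --- the paper's appendix cites condition 1.b, which appears to be a slip, since two keys with the same data are incomparable non-basis elements with equal sets of generators.
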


\begin{definition}\label{Definition: Can inst, full tuple, induced schema}
\begin{enumerate}
\item Given a schema $X$  the functor $\kaninst{X}:X\to<150>\xalg{FinSet}$ defined by $x\mapsto \{x\}$ is the \emph{terminal instance} obtained by considering the attributes themselves to be the values and the relations themselves as the keys. 
\item  A \emph{full tuple} $t$ of an instance $I$ over schema $X$ is a natural transformation $t: \kaninst{X} \to/=>/<100> I $.    We write \Trm{X}{I} for the set of full tuples (indicating that we see them as terms type-theoretically). 
\item Given an instance \instans{I}{X} the \emph{induced schema } (over $X$) is the category of elements $\int_{{X}}I$.  The \emph{canonical projection} from the induced schema to $X$ is the projection $p:\kanskje{X}{I}\to<150> X$. 

\item Given a full tuple $t: \kaninst{X} \to/=>/<100> I $, the \emph{induced section} is the morphism $\hat{t}:X\to<150> \kanskje{X}{I}$ in \thry{S} defined by $x\mapsto \pair{x, t_x(x)}$. Notice that the induced section is always a display morphism. 
%
%
\end{enumerate}
\end{definition}

\subsubsection{Examples}\label{Subsubsection: Examples}

\begin{enumerate}
\item Let $S$ be the schema the attributes of which are $A,B,C$ and the relation variables $R:AB$ and $Q:BC$, with indicated column names. Let an instance $I$   be given by 

\begin{center}
\begin{tabular}{ l | c  r }
                     
  R & A & B \\
   \hline  
  1& a & b \\
  2 & a' & b \\
  
\end{tabular}
\quad
\begin{tabular}{ l | c  r }
                     
  Q & B & C \\
   \hline  
 1 & b & c \\
  2 & d & e \\
  
\end{tabular}
\end{center}

From a ``simplicial'' point of view, $S$ is the category

\[\bfig
\morphism<-500,-150>[R`A;\delta^R_A]
\morphism<500,-150>[R`B;\delta^R_B]
\morphism(1000,0)<-500,-150>[Q`B;\delta^Q_B]
\morphism(1000,0)<500,-150>[Q`C;\delta^Q_C]
\efig\] 

%

$I$ is the functor 

\[\bfig
\morphism<-800,-150>[I(R)= \{1,2\} `I(A)=\{a,a'\};I(\delta^R_A)]
\morphism<800,-150>[I(R)= \{1,2\}`I(B)=\{b,d\};I(\delta^R_B)]
\morphism(1600,0)<-800,-150>[I(Q)= \{1,2\}`I(B)=\{b,d\};I(\delta^Q_B)]
\morphism(1600,0)<800,-150>[I(Q)= \{1,2\}`I(C)=\{c,e\};I(\delta^Q_C)]
\efig\] 


with $I(\delta^R_A)(1)=a$, $I(\delta^R_B)(1)=b$ and so on. Notice that $I$ has precisely two full tuples; if we think of a full tuple as a selection of an element from each attribute such that the relevant projection of the resulting tuple has a matching key in all relations, then the two full tuples in $I$ are \pair{a,b,c} and \pair{a',b,c}. Finally, the induced schema \kanskje{S}{I} has attribute set and relation variables as follows, where we use subscript notation instead of pairing for readability: $\textnormal{Attributes}:\{a_A, a'_A, b_B, d_B, c_C, e_C\};\ \textnormal{Relations}: \{1_R\! :\! a_Ab_B, 2_R\! :\! a'_Ab_B,1_Q\! :\! b_Bc_C, 2_Q\! :\! d_Be_C\}$

\item Going from simplicial complexes to schemas, consider the 2-simplex $\Delta_2$ and an example functor   $J:\Delta_2\to<150>\xalg{FinSet}$ given by $J(\{0,1,2\})=\{\pair{a,b,c}\},\ J(\{0,1\})=\{\pair{a,b},\ \pair{a',b}\},\ J(\{1,2\})=\{\pair{b,c},\ \pair{d,e}\},\ J(\{0,2\})=\{\pair{a,c}, \pair{a',c}\},\ J(0)=\{a,a'\},\ J(1)=\{b,d\},\ J(2)=\{c,e\}$, where we use tuples for keys to avoid having to write out the $J(\delta)$s. Writing this up in table form we obtain:

 \begin{center}
\begin{tabular}{ c | c  c c }
                     
   & 0 & 1 & 2\\
   \hline  
  1& a & b &c \\

\end{tabular}
\quad
\begin{tabular}{ l | c  r }
                     
   & 0 & 1 \\
   \hline  
  1& a & b \\
  2 & a' & b \\
  
\end{tabular}
\quad
\begin{tabular}{ l | c  r }
                     
   & 0 & 2 \\
   \hline  
 1 & a & c \\
  2 & a' & c \\
  
\end{tabular}
\quad
\begin{tabular}{ l | c  r }
                     
   & 1 & 2 \\
   \hline  
 1 & b & c \\
  2 & d & e \\
  
\end{tabular}
\quad
\begin{tabular}{ l | c  }
                     
   & 0  \\
   \hline  
 1 & a  \\
  2 & a' \\
  
\end{tabular}
\quad
\begin{tabular}{ l | c   }
                     
   & 1 \\
   \hline  
 1 & b  \\
  2 & d  \\
  
\end{tabular}
\quad
\begin{tabular}{ l | c   }
                     
   &  2 \\
   \hline  
 1 &  c \\
  2 &  e \\
  
\end{tabular}
\end{center}

Notice that $J$ has precisely one full tuple. 

\item  There is a morphism of complexes $f:S\to<150>\Delta_2$ defined (and determined) by $A\mapsto 0$, $B\mapsto 1$, $C\mapsto 2$. The instance $I$ of $S$ is the restriction of $J$ to $S$ along $f$, that is, $I=J\circ f$. Such composition operations let us regard the category of schemas together with their instances as an indexed category. 

\end{enumerate}

\subsection{Simplicial databases}
\label{Subsection: simplicial databases}

We have, with the above, a (strict) functor $\Rel{-}:\op{\thry{S}}\to Cat$
defined by $X\mapsto \Rel{X}$ and $f:X\to<150>Y$ maps to $\Rel{f}=-\circ f:\Rel{Y}\to \Rel{X}$. We regard this strict, indexed category as a ``category of databases'' in which the totality of databases and schemas are collected. We give, first, the notation, definitions and equations we need for the operation  \Rel{f} for morphisms $f$ (which can syntactically be thought of as substitution), and follow with the instance-forming operations 0, 1, dependent sum and product, +, and identity. We devote some space to substitution and dependent sum, being perhaps the most interesting operations. For the remaining ones, only basic definitions (needed to interpret the type theory of Section \ref{Section: The type theory}) are given.

\subsubsection{Substitution}
\begin{definition}For $f:X\to<150>Y$ in \thry{S} and $J\in \Rel{Y}$ and $\kortinldpil{t}{1_Y}{J}$ in $\Trm{Y}{J}$:
\begin{enumerate}
\item We write $J[f]:=\Rel{f}(J)=J\circ f$. With \kortinlpil{g}{Z}{X} we have $J[f][g]=J[f\circ g] = J\circ f\circ g$. 
\item Define $t[f]\in \Trm{X}{J[f]}$  by $x\mapsto t(f(x))\in J[f](x)$. Note that  $t[f][g]=t[f\circ g]$.
\item With $\kortinlpil{p_J}{\kanskje{Y}{J}}{X}$ the canonical projection,  let $\kortinldpil{v_J}{1_{\kanskje{Y}{J}}}{ J[p_J]}$ be the full tuple defined by $\pair{y,a}\mapsto a$. (We elsewhere leave subscripts on $v$ and $p$ determined by context.)
\item Denote by \kortinlpil{\tilde{f}}{\kanskje{X}{J[f]}}{\kanskje{Y}{J}} the schema morphism defined by $\pair{x,a}\mapsto \pair{f(x),a}$. Notice that if $f$ is display, then so is $\tilde{f}$.
\end{enumerate}
\end{definition}

\begin{lemma}\label{Lemma: Soundness for substitution}
The following equations hold:
\begin{enumerate}
%
\item For $X$ in \thry{S} and $I\in \Rel{X}$ and $t\in \Trm{X}{I}$ we have $p\circ \hat{t}=\mathrm{id}_X$ and $t=v[\hat{t}]$.
\item For $f:X\to<150>Y$ in \thry{S} and $J\in \Rel{Y}$ $t\in \Trm{Y}{J}$ we have 
\begin{enumerate}
\item $\kortinlpil{p\circ \tilde{f}= f\circ p}{\kanskje{X}{J[f]}}{Y}$;
\item  $\kortinlpil{\tilde{f}\circ\widehat{t[f]}=\hat{t}\circ f}{X}{\kanskje{Y}{J}}$; and
\item   $\kortinldpil{v_J[\tilde{f}]=v_{J[f]}}{1_{\kanskje{X}{J[f]}}}{J[f][p]}$.
\end{enumerate}

\item For $f:X\to<150>Y$ and $g:Y\to<150>Z$  in \thry{S} and $J\in \Rel{Z}$ we have  $\widetilde{g\circ f}=\tilde{g}\circ \tilde{f}$.
\item For $X \in \thry{S}$ and $I\in \Rel{X}$ we have  $\tilde{p}\circ \hat{v}=\mathrm{Id}_{\kanskje{X}{I}}$. 
\end{enumerate}
\end{lemma}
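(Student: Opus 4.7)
The plan is to verify each of the four items by direct, element-wise unfolding of definitions. Each equation is either an equality of morphisms in \thry{S} or of full tuples (natural transformations $\kaninst{X} \Rightarrow I$), and all the maps in sight---$\hat{t}$, $\tilde{f}$, $p$, $v$---are defined by explicit pointwise formulas. Because a morphism in \thry{S} is determined by its action on elements (via the basis of the underlying complex), and because the action on arrows $\delta^x_y$ in a category of elements is forced by functoriality and the pairing conventions, it suffices in each case to check agreement on objects; functoriality and naturality follow automatically.

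For (1), $\hat{t}(x) = \pair{x, t_x(x)}$, so $p(\hat{t}(x)) = x = \mathrm{id}_X(x)$; and at $x\in X$, $(v_I[\hat{t}])_x(x) = (v_I)_{\pair{x, t_x(x)}}(\pair{x, t_x(x)}) = t_x(x)$. For (2a), both sides send $\pair{x,a}\in\kanskje{X}{J[f]}$ to $f(x)$. For (2b), $\tilde{f}(\widehat{t[f]}(x)) = \tilde{f}(\pair{x, t_{f(x)}(f(x))}) = \pair{f(x), t_{f(x)}(f(x))} = \hat{t}(f(x))$. For (2c), at $\pair{x,a}$ the left-hand side unfolds to $(v_J)_{\tilde{f}(\pair{x,a})}(\tilde{f}(\pair{x,a})) = (v_J)_{\pair{f(x),a}}(\pair{f(x),a}) = a$, which is also $(v_{J[f]})_{\pair{x,a}}(\pair{x,a})$.

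For (3), both $\widetilde{g\circ f}$ and $\tilde{g}\circ\tilde{f}$ send $\pair{x,a} \mapsto \pair{g(f(x)), a}$. For (4), $\tilde{p}(\hat{v}(\pair{x,a})) = \tilde{p}(\pair{\pair{x,a}, a}) = \pair{p(\pair{x,a}), a} = \pair{x,a}$. The only non-trivial bookkeeping---the thing I expect to be the main obstacle---is checking that the constructions are honest morphisms in \thry{S} of the right kind, so that the computations above live where they should. Concretely, $\hat{t}$ being a (display) morphism uses naturality of $t$ to ensure $t_y(y) = I(\delta^x_y)(t_x(x))$ whenever $\delta^x_y$ exists, and $\tilde{f}$ being well-defined on arrows of $\kanskje{X}{J[f]}$ uses the defining identity $J[f](\delta^x_{x'}) = J(\delta^{f(x)}_{f(x')})$. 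Once these checks are granted, every part of the lemma reduces to the pairing computations displayed above.
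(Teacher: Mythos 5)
Your proposal is correct and proceeds exactly as the paper's own proof in the appendix: a direct pointwise unfolding of the definitions of $\hat{t}$, $\tilde{f}$, $p$, and $v$ on elements $\pair{x,a}$, with the same computations for each item. The only difference is that you make explicit the well-definedness checks (naturality of $t$ for $\hat{t}$, and the substitution identity for $\tilde{f}$) that the paper delegates to its earlier definitions, which is a harmless addition.
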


\subsubsection{On tuplification and display maps.} We have so far considered arbitrary morphisms \kortinlpil{f}{X}{Y} in \thry{S}. In what follows we shall restrict to display ones, noting that all context morphisms of the type theory in Section \ref{Section: The type theory} are interpreted as display maps. Furthermore, we shall assume, at least implicitly, that all instances are on \emph{tuple form}, in the sense that for \kortinlpil{J}{X}{\xalg{FinSet}} the elements in $J(x)$ are in fact elements in the product $\Pi_{A\in B_{\leq x}}J(A)$. We identify a singleton tuple with its element and write $\star$ for the empty tuple. It is clear that any instance is canonically isomorphic to one on tuple form; the \emph{tuplification} of the instance, so to say. Moreover, if two instances are isomorphic, on tuple form, and agree on attributes, then they must be identical. It is this property that we need for certain of the equations below. That being said, it is often more convenient to work with instances not on this form, and we frequently do so, pointing to the fact that the ``tuplification'' is just a canonical rewriting. Finally, we note the following connection between tuple form and display maps: 

\begin{lemma}\label{Lemma: display is tuple preserving}
Let  $f:X\to<150>Y$ in \thry{S}. Then $f$ is display if and only if for all $J\in \Rel{Y}$ on tuple form $J[f]$ is also on tuple form.
%
\end{lemma}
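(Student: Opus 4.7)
The lemma hinges on comparing, for each $x\in X$, the two finite products $\prod_{A'\in B_{\leq f(x)}}J(A')$—which contains $J(f(x))$ by $J$ being on tuple form at $f(x)$—and $\prod_{A\in B_{\leq x}}J(f(A))=\prod_{A\in B_{\leq x}}J[f](A)$, which must contain $J[f](x)=J(f(x))$ if $J[f]$ is to be on tuple form at $x$. These products are related by reindexing along the restriction $f\upharpoonright_{B_{\leq x}}\! :\! B_{\leq x}\to B_{\leq f(x)}$, which is surjective because $f$ is based ($f(B_{\leq x})=B_{\leq f(x)}$), and is bijective exactly when $f$ is display at $x$ (since display preserves dimension, $|B_{\leq x}|=|B_{\leq f(x)}|$, and a surjection of equinumerous finite sets is a bijection).

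For the forward direction, assume $f$ is display and let $J\in\Rel{Y}$ be on tuple form. For each $x\in X$, the bijection $f\upharpoonright_{B_{\leq x}}$ induces a canonical bijection of the two products above, sending a tuple $(s_{A'})_{A'\in B_{\leq f(x)}}$ to the tuple $(s_{f(A)})_{A\in B_{\leq x}}$. Under this reindexing, the subset $J(f(x))$ of the first product corresponds to a subset of the second; appealing to the paper's convention that two instances which are isomorphic, on tuple form, and agree on attributes are identified, $J[f](x)$ is literally on tuple form at $x$. Functoriality is routine, since the morphisms $J(\delta)$ on the $f(x)$-side and $J[f](\delta)$ on the $x$-side are both given by projection of tuples.

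For the reverse direction I argue contrapositively: if $f$ is not display, exhibit a $J\in\Rel{Y}$ on tuple form whose pullback $J[f]$ is not on tuple form. Choose $x\in X$ with $f\upharpoonright_{B_{\leq x}}$ non-injective, so $|B_{\leq x}|>|B_{\leq f(x)}|$. Define $J$ by $J(A')=\{\star\}$ for every attribute $A'\in Y_0$ and $J(y)=\prod_{A'\in B_{\leq y}}J(A')$ (a singleton indexed by $B_{\leq y}$) for each $y\in Y$, with the functor maps forced as the evident projections. This $J$ is trivially relational (each $J(y)$ is a singleton, so any family of functions out of it is jointly injective) and manifestly on tuple form. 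But the unique element of $J[f](x)=J(f(x))$ is a tuple indexed by $B_{\leq f(x)}$, whose cardinality differs from that of $B_{\leq x}$, so this element does not lie in $\prod_{A\in B_{\leq x}}J(f(A))$ and $J[f]$ fails to be on tuple form at $x$. The main subtlety in the whole argument is the forward direction's reliance on the implicit identification of tuples under canonical reindexing; making this rigorous amounts to invoking the uniqueness-on-attributes convention that justifies equating the two reindexed products.
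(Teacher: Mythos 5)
Your proof is correct and follows essentially the same route as the paper's: the forward direction uses that a display map restricts to a bijection $B_{\leq x}\to B_{\leq f(x)}$ so that $J[f](x)=J(f(x))$ is (under the canonical reindexing) a set of tuples on $\prod_{A\in B_{\leq x}}J(f(A))$, and the converse produces a singleton-valued instance whose pullback fails tuple form for cardinality reasons. Your version is slightly more explicit about the reindexing identification and works at an arbitrary non-injective point rather than reducing to the paper's two-attribute configuration, but these are presentational differences only.
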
   
%

\subsubsection{Dependent product:}

Let $X\in\thry{S}$, $J\in \Rel{X}$, and $G\in\Rel{\kanskje{X}{J}}$.  We define the instance $\kortinlpil{\Pi_JG}{X}{\xalg{FinSet}}$  as the right Kan-extension of $G$ along $p$. Explicitly, we preliminary construct the following instance, not on tuple form. Let $\kortinlpil{\Xi_JG}{X}{\xalg{FinSet}}$ be defined as follows. For $x\in X$ define $P_x=\cterm{\pair{y,a}}{y\leq x,\ a\in J(y)}$ and set $\Xi_JG(x)$ to be the set \cterm{(c_{y,a})_{\pair{y,a}\in P_x}}{c_{y,a}\in G(y,a)} of families satisfying the condition that

\begin{equation} \textnormal{for all}\ \delta^{{y,a}}_{z,b}\ \textnormal{in}\ \kanskje{X}{J}\ \textnormal{with}\ y\leq x\ \textnormal{we have}\ G(\delta^{{y,a}}_{z,b})(c_{y,a})=c_{z,b}. 
\end{equation}
Thus an element $c \in \Xi_JG(x)$ is a function $c$ assigning for each $y\leq x$ and $a\in J(y)$ an element $c_{y,a}$ in $G(y,a)$ such that (1) is fulfilled.
For $x'\leq x$, we have $P_{x'}\subseteq P_x$ and the function $\kortinlpil{\Xi_JG(\delta^x_{x'})}{\Xi_JG(x)}{\Xi_JG(x')}$ sends a family $(c_{y,a})_{P_x}$ to its projection on  $P_{x'}$. In particular, for $x_0\in G_{\leq x}$ we have that $\Xi_JG(\delta^x_{x_0})((c_{y,a})_{P_x})$ is the projection on $y=x_0$. Since $G$ is relational, $c_{y,a}\in G(y,a)$ is determined by the family $(G(\delta^{y,a}_{y_0,\delta^{y}_{y_0}(a)})(c_{y,a}))_{y_0\in B_{\leq y}}$ (recall the notation $B_{\leq y}$ for the set of attributes of $y$). Thus a family in $\Xi_JG(x)$ is determined by its values on       $\Xi_JG(\delta^x_{x_0})$ as $x_0$ runs through $B_{\leq x}$, and we have;

\begin{lemma}
$\kortinlpil{\Xi_JG}{X}{\xalg{FinSet}}$ is relational.
\end{lemma}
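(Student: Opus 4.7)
The plan is to unfold the definition of \emph{relational} and verify joint injectivity of the restrictions to attributes directly, exploiting relationality of $G$ at each point of $\kanskje{X}{J}$ together with the coherence condition~(1). The remark immediately preceding the statement already amounts to the argument; what follows organizes it into a plan.

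First I would fix $x\in X$ and take two families $c=(c_{y,a})_{P_x}$ and $c'=(c'_{y,a})_{P_x}$ in $\Xi_JG(x)$ which agree under every projection $\Xi_JG(\delta^x_{x_0})$ as $x_0$ ranges over $B_{\leq x}$. Since basis elements are minimal in $\op{X}$ (the only $z\in\op{X}$ with $z\leq x_0$ when $x_0\in B$ is $x_0$ itself, using axioms (a) and (c) of Definition~\ref{Def: based poset}), we have $P_{x_0}=\cterm{(x_0,a)}{a\in J(x_0)}$, so the assumption unpacks to $c_{x_0,a}=c'_{x_0,a}$ for every $x_0\in B_{\leq x}$ and every $a\in J(x_0)$. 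The goal is to derive $c_{y,a}=c'_{y,a}$ for every $(y,a)\in P_x$.

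Next I would fix an arbitrary $(y,a)\in P_x$ and use relationality of $G$ at the object $(y,a)$ of $\kanskje{X}{J}$. By Lemma~\ref{Lemma: relational instances are based posets}, $p:\kanskje{X}{J}\to X$ is a display morphism, so $B_{\leq(y,a)}=\cterm{(y_0,\delta^y_{y_0}(a))}{y_0\in B_{\leq y}}$. Relationality of $G$ then says that $c_{y,a}$ and $c'_{y,a}$ are determined by the tuples
\[
\bigl(G(\delta^{(y,a)}_{(y_0,\delta^y_{y_0}(a))})(c_{y,a})\bigr)_{y_0\in B_{\leq y}}\quad\text{and}\quad \bigl(G(\delta^{(y,a)}_{(y_0,\delta^y_{y_0}(a))})(c'_{y,a})\bigr)_{y_0\in B_{\leq y}}.
\]
By the coherence condition~(1) applied to $c$ and $c'$, these tuples are equal to $(c_{y_0,\delta^y_{y_0}(a)})_{y_0}$ and $(c'_{y_0,\delta^y_{y_0}(a)})_{y_0}$ respectively. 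Because $y\leq x$ and $y_0\leq y$ we have $y_0\leq x$ with $y_0\in B$, so $y_0\in B_{\leq x}$; hence by the hypothesis each $c_{y_0,\delta^y_{y_0}(a)}$ agrees with its primed counterpart. The two determining tuples coincide, so $c_{y,a}=c'_{y,a}$, and as $(y,a)\in P_x$ was arbitrary, $c=c'$.

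The only subtlety is bookkeeping: one must confirm that basis elements of $\op{X}$ are minimal (so that $P_{x_0}$ reduces as claimed) and that $p:\kanskje{X}{J}\to X$ being a display morphism lets us identify $B_{\leq(y,a)}$ with the image of $B_{\leq y}$ under the section $y_0\mapsto(y_0,\delta^y_{y_0}(a))$. Both are immediate from the earlier definitions and Lemma~\ref{Lemma: relational instances are based posets}, so no real obstacle arises; the lemma is essentially a repackaging of the remark preceding it.
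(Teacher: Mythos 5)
Your argument is correct and is essentially the paper's own: the paper proves the lemma by the remark immediately preceding it (relationality of $G$ plus condition (1) shows a family in $\Xi_JG(x)$ is determined by its projections $\Xi_JG(\delta^x_{x_0})$ for $x_0\in B_{\leq x}$), and your proposal just carries out that determination argument in full, with the bookkeeping about minimality of basis elements and the basis of $\kanskje{X}{J}$ checked explicitly. Nothing is missing.
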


Let $X\in\thry{S}$, $J\in \Rel{X}$,  $G\in\Rel{\kanskje{X}{J}}$, and $\kortinldpil{t}{1_{\kanskje{X}{J}}}{G}$ a full tuple. Then for every $x\in X$ we have that the family $(t(y,a))_{P_x}$ is an element in $\Xi_JG(x)$, since the condition (1) is satisfied. Furthermore, for $\delta^x_{x'}$ we have that $\Xi_JG(\delta^x_{x'})((t(y,a))_{P_x})=(t(y,a))_{P_x'}$. Thus $t$ determines a full tuple which we call $\lambda t\in\Trm{X}{\Xi_JG}$.

Next, given $X\in\thry{S}$, $J\in \Rel{X}$,  $G\in\Rel{\kanskje{X}{J}}$, and $s\in \Trm{X}{\Xi_JG}$, define a full tuple $\mathrm{Ap}_s\in \Trm{\kanskje{X}{J}}{G}$ by $\pair{x,a}\mapsto s(x)_{x,a}$ (the \pair{x,a}'th element of the family $s(x)$---it is straightforward to verify that this does define a full tuple by using condition (1) and the fact that $s$ is a full tuple).

Now, this preliminary definition is not of tuple form, so the final definition of  $\Pi_JG$ is the  tuplification of $\Xi_JG$. The definitions of $\lambda$ and $\mathrm{Ap}$ are changed accordingly, i.e.\ along the isomorphism. Note that for an attribute $A\in X_0$ we have that $\Pi_JG(A)$ is the product $\Pi_{a\in J(A)}G(A,a)$.  We have then the following equations.

\begin{lemma}\label{Lemma: Pi regler}
 Let $\kortinlpil{f}{Z}{X}$  be a display morphism in  \thry{S}, $J\in \Rel{X}$,  $G\in\Rel{\kanskje{X}{J}}$, and $t\in \Trm{(\kanskje{X}{J})}{G}$.
 \begin{enumerate}
 \item $\mathrm{Ap}_{\lambda t}= t$.
 \item $(\Pi_JG)[f]=\Pi_{J[f]}G[\tilde{f}]$.
 \item $(\lambda t)[f]=\lambda(t[\tilde{f}])$.
 \end{enumerate}
%
%
%
%
\end{lemma}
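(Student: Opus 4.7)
\medskip
\noindent\textbf{Proof plan.}  The strategy is to prove each of the three equations by unwinding the definitions on the preliminary (non-tuple) form $\Xi_J G$ and then transporting along the canonical tuplification isomorphism; Lemma on display maps and tuple form (the preceding lemma) will guarantee that nothing goes wrong when we pass back and forth.

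For (1), I would simply unfold: by definition $\lambda t$ evaluated at $x$ is the family $(t(y,a))_{\pair{y,a}\in P_x}$, and then $\mathrm{Ap}_{\lambda t}\pair{x,a}$ picks out the $\pair{x,a}$-component of $(\lambda t)(x)$, which is $t(x,a)$. So $\mathrm{Ap}_{\lambda t}= t$ holds already at the level of $\Xi_J G$, and hence survives tuplification.

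For (2), the key point is that because $f$ is display, for each $z\in Z$ the restriction $f\!\upharpoonright_{(\downarrow z)}\colon (\downarrow z)\to (\downarrow f(z))$ is a bijection; this bijection lifts to a bijection $P^{J[f]}_z \to P^J_{f(z)}$ sending $\pair{y,a}$ to $\pair{f(y),a}$, using that $J[f](y)=J(f(y))$. Under this bijection the fibre sets match since $G[\tilde f](y,a)=G\pair{f(y),a}$, and condition (1) on one side translates exactly to condition (1) on the other because $\tilde f$ is a based poset morphism commuting with the face maps $\delta^{y,a}_{z,b}$. This gives an iso $\Xi_{J[f]}G[\tilde f](z)\cong \Xi_J G(f(z))$ natural in $z$. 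To upgrade this isomorphism to the identity on tuple form, I invoke Lemma on display maps: since $f$ is display, $(\Pi_J G)[f]$ is already on tuple form whenever $\Pi_J G$ is, and its value on an attribute $A\in Z_0$ is $\prod_{a\in J[f](A)}G[\tilde f]\pair{A,a}$, which is exactly $\Pi_{J[f]}G[\tilde f](A)$; since two tuple-form instances that agree on attributes and are isomorphic are actually equal, the two instances coincide on the nose.

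For (3), given (2) both sides live in $\Trm{Z}{\Pi_{J[f]}G[\tilde f]}$. Unfolding on $\Xi$-form, $(\lambda t)[f]$ evaluated at $z$ is the family $(t(y,a))_{\pair{y,a}\in P_{f(z)}}$, while $\lambda(t[\tilde f])$ at $z$ is the family $\bigl((t[\tilde f])\pair{y',a}\bigr)_{\pair{y',a}\in P_z} = \bigl(t(f(y'),a)\bigr)_{\pair{y',a}\in P_z}$. Under the bijection $P_z\cong P_{f(z)}$ of step (2) these two families are identified, so the equation holds after tuplification. The main obstacle is bookkeeping: carefully checking that the bijection of down-sets induced by a display map $f$ transports the coherence condition (1) exactly, and that the passage to tuple form is compatible with substitution along display maps — but this is precisely what Lemma on display maps and tuple form was set up to provide.
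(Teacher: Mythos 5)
Your proposal is correct and follows essentially the same route as the paper's own proof: unfolding $\mathrm{Ap}_{\lambda t}$ on $\Xi_JG$ for (1), establishing the bijection $P_z\to P_{f(z)}$, $\pair{z',a}\mapsto\pair{f(z'),a}$ from the display property and combining it with agreement on attributes plus the rigidity of tuple-form instances for (2), and checking that both full tuples correspond under that same bijection for (3). The only cosmetic difference is that you explicitly cite the display-maps-preserve-tuple-form lemma where the paper leaves that step implicit.
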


\begin{example}\label{Example: natural join}
Consider the schema $S$ of Section \ref{Subsubsection: Examples}, which we can give as an instance of $\Delta_2$ as (ignoring tuplification for readability) $\kortinlpil{S}{\Delta_2}{\xalg{FinSet}}$ by $S(0)=\{A\}$, $S(1)=\{B\}$, $S(2)=\{C\}$, $S(01)=\{R\}$, $S(12)=\{Q\}$, and $S(02)=S(012)=\emptyset$. Notice that, modulo the isomorphism between $S$ as presented in  \ref{Subsubsection: Examples} and  \kanskje{\Delta_2}{S},  the morphism $\kortinlpil{f}{S}{\Delta_2}$ of  \ref{Subsubsection: Examples}  is the canonical projection \kortinlpil{p}{\kanskje{\Delta_2}{S}}{\Delta_2}. Next we have $I\in \kanskje{\Delta_2}{S}$ as  (in tabular form,  using subscript instead of pairing for elements in $ \kanskje{\Delta_2}{S}$, and omitting the three single-column tables)

\begin{center}
\begin{tabular}{ l | c  c}
                     
  $R_{01}$ & $A_0$ & $B_1$ \\
   \hline  
  & a & b \\
   & a' & b \\
  
\end{tabular}
\quad
\begin{tabular}{ l | c  c }
                     
  $Q_{12}$ & $B_1$ & $C_2$ \\
   \hline  
  & b & c \\
   & d & e \\
  
\end{tabular}
\end{center}
Then by unpacking the definition, one sees that $\Pi_{S}I$ is, in tabular form,

 \begin{center}
\begin{tabular}{ c | c  c c }
                     
   & 0 & 1 & 2\\
   \hline  
  & a & b &c \\
  & a' & b & c \\ 
  
\end{tabular}
\quad
\begin{tabular}{ l | c  c }
                     
   & 0 & 1 \\
   \hline  
  & a & b \\
   & a' & b \\
  
\end{tabular}
\quad
\begin{tabular}{ l | c  c }
                     
   & 0 & 2 \\
   \hline  
  & a & c \\
   & a' & c \\
  & a & e \\
   & a' & e \\  
\end{tabular}
\quad
\begin{tabular}{ l | c  c }
                     
   & 1 & 2 \\
   \hline  
  & b & c \\
   & d & e \\
  
\end{tabular}
\quad
\begin{tabular}{ l | c   }
                     
   & 0  \\
   \hline  
  & a  \\
  & a'  \\
  
\end{tabular}
\quad
\begin{tabular}{ l | c   }
  &  1 \\
   \hline  
  &  b \\
   &  d \\
  
\end{tabular}
\quad
\begin{tabular}{ l | c  }
                     
   & 2 \\
   \hline  
  &  c \\
   &  e \\
  
\end{tabular}

\end{center}

\noindent This exemplifies the link between the dependent product operation and natural join.

\end{example}


\begin{description}
\item[0 and 1 instances:]
Given $X\in \thry{S}$ the terminal instance $1_X$ has already been defined.  The \emph{initial} instance $0_X$ is the constant $0$ functor, $x\mapsto \emptyset$. Note that \kanskje{X}{0_X} is the empty schema.
\item[Dependent sum:]
\label{sigma-modell}
Let $X\in\thry{S}$, $J\in \Rel{X}$, and $G\in\Rel{\kanskje{X}{J}}$.  We define the instance $\kortinlpil{\Sigma_JG}{X}{\xalg{FinSet}}$  (up to tuplification) by $x\mapsto\cterm{\pair{a,b}}{a\in J(x),\ b\in G(x,a)}$. For $\delta^x_y$ in $X$, $\Sigma_JG(\delta^x_y)(a,b)=\pair{\delta^x_y(a),\delta^{x,a}_{y,\delta^x_y(a)}(b)}$. 
%
We leave to the reader the straightforward verification that $\Sigma_JG$ is relational and that the stability-under-substitution equation $(\Sigma_JG)[f]=\Sigma_{J[f]}G[\tilde{f}]$
holds.
%
%
%
\item[Identity:]
\label{id-modell}
Given $X\in \thry{S}$ and $J\in\Rel{X}$ the \emph{Identity instance} $\mathrm{Id}_J\in \Rel{\kanskje{\kanskje{X}{J}}{J[p]}}$ is defined, up to tuplification, by $\pair{\pair{x,a},b}\mapsto \star$ if $a=b$ and $\pair{\pair{x,a},b}\mapsto \emptyset$ else.  The full tuple $\operatorname{refl}\in \Trm{(\kanskje{X}{J})}{\mathrm{Id}_J[\hat{v}]}$ is defined by $\pair{x,a}\mapsto \star$. 
\item[Disjoint union:]
\label{sum-modell}
Given $X\in \thry{S}$ and $I,J\in\Rel{X}$, the instance  $I+J\in\Rel{X}$ is defined by $x\mapsto \cterm{\pair{n,a}}{\textnormal{Either}\ n=0\wedge a\in I(x)\ \textnormal{or}\ n=1\wedge a\in J(x)}$. We have full tuples $\operatorname{left}\in \Trm{\kanskje{X}{I}}{(I+J)[p]}$ defined by $\pair{x,a}\mapsto \pair{0,a}$ and $\operatorname{right}\in \Trm{\kanskje{X}{J}}{(I+J)[p]}$ defined by $\pair{x,a}\mapsto \pair{1,a}$.
\end{description}

\section{The type theory}\label{Section: The type theory}

We introduce a Martin-Löf style type theory \cite{mltt}, with explicit substitutions, extended with context and substitution constants representing simplices and face maps. The substitution and context part of the type theory is essentially that of Categories with families \cite{hofmann97}, except for a novel set of context and substitution constants. In the following, we display the basic rules for substitution, context extension and for the type contructions $\Pi$, $\Sigma$, and $+$. Listed here are all rules introducing new terms, and selected, important equations. We also omit some elimination rules, but refer to the literature on type theory, such as \cite{hofmann97,nordstrom-petersson}.



For each collection of rules we give the intended interpretation in the model. The interpretation is given by the operation $\bb{-}$, defined recursively throught this section. This is then summed up in the soundness theorem in the end.

\subsection{Judgements}

The type system has the following eight judgements, with intended interpretations.

\begin{center}
\begin{tabular}{l|l}
 Judgement & Interpretation \\ \hline
$?\ \mathtt{context}$ & $?$ is a schema\\
$\Gamma\vdash {?}\ \mathtt{type}$ & $?$ is an instance of the schema $\Gamma$\\
$\Gamma\vdash {?} : A$ & ? is an full tuple in the instance $A$\\
$? : \Gamma \to \Lambda$ & ? is a (display) schema morphism\\
$\Gamma \equiv \Lambda$ & $\Gamma$ and $\Lambda$ are equal schemas\\
$\Gamma\vdash  A \equiv B$ & $A$ and $B$ are equal instances of $\Gamma$\\
$\Gamma\vdash t \equiv u: A$ & $t$ and $u$ are equal full tuples in $A$\\
$\sigma \equiv \tau : \Gamma \to \Lambda$ & the morphisms $\sigma$ and $\tau$ are equal
\end{tabular}
\end{center}

\subsection{Substitutions}

The following axioms state that contexts and substitutions form a category, acting on the types and elements. Below the axioms are stated the intended interpretations cf.\ Section \ref{section: the model}


\begin{mathpar} 
\inferrule*[right=subst-comp]
  {\Gamma,\Delta,\Theta\ \mathtt{context} \\ \sigma : \Gamma \to \Delta \\ \tau : \Delta \to \Theta}
  {\tau \circ \sigma : \Gamma \to \Theta}

\elim{
\inferrule*[right=subst-assoc]
  {\Gamma,\Delta, \Theta, \Lambda\ \mathtt{context} \\ \sigma : \Gamma \to \Delta \\ \tau : \Delta \to \Theta \\ \upsilon : \Theta \to \Lambda}
  {(\upsilon \circ \tau) \circ \sigma \equiv \upsilon \circ (\tau \circ \sigma): \Gamma \to \Lambda}
}


\inferrule*[right=subst-id]
    { \Gamma\ \mathtt{context}}
{id_\Gamma : \Gamma \to \Gamma }


\elim{
\inferrule*[right=subst-id-left]
    { \Gamma,\Delta\ \mathtt{context} \\ \sigma : \Gamma \to \Delta}
{id_\Gamma \circ \sigma \equiv \sigma: \Gamma \to \Delta }
}


\elim{
\inferrule*[right=subst-id-right]
    { \Gamma,\Delta\ \mathtt{context} \\ \sigma : \Gamma \to \Delta}
{\sigma \circ id_\Gamma  \equiv \sigma: \Gamma \to \Delta }
}


\inferrule*[right=type-transfer]
    { \elim{\Gamma, \Delta\ \mathtt{context} \\} \Gamma \vdash A \ \mathtt{type} \\ \sigma : \Delta \to \Gamma }
{\Delta \vdash A[\sigma]\ \mathtt{type} }


\elim{
\inferrule*[right=type-transfer-comp]
    { \elim{\Gamma, \Delta, \Theta\ \mathtt{context} \\} \Gamma \vdash A \ \mathtt{type} \\ \sigma : \Delta \to \Gamma \\ \tau : \Theta \to \Delta }
{\Theta \vdash A[\sigma \circ \tau] \equiv A[\sigma][\tau] }
}


\elim{
\inferrule*[right=type-transfer-id]
    { \elim{\Gamma\ \mathtt{context} \\} \Gamma \vdash A \ \mathtt{type} }
{\Gamma \vdash A[id_\Gamma] \equiv A }
}


\inferrule*[right=term-transfer]
    {\elim{ \Gamma, \Delta\ \mathtt{context} \\ \Gamma \vdash A \ \mathtt{type} \\} \Gamma \vdash a : A \\ \sigma : \Delta \to \Gamma}
{\Delta \vdash a[\sigma] : A[\sigma] }


\elim{
\inferrule*[right=term-transfer-comp]
    { \elim{\Gamma, \Delta, \Theta\ \mathtt{context} \\ \Gamma \vdash A \ \mathtt{type} \\} \Gamma \vdash a : A \\  \sigma : \Delta \to \Gamma \\ \tau : \Theta \to \Delta }
{\Theta \vdash a[\sigma \circ \tau] \equiv a[\sigma][\tau] : A[\sigma][\tau] }
}

\elim{
\inferrule*[right=term-transfer-id]
    { \elim{\Gamma\ \mathtt{context} \\ \Gamma \vdash A \ \mathtt{type} \\} \Gamma \vdash a : A }
{\Delta \vdash a[id_\Gamma] \equiv a : A }
}

\end{mathpar}


\begin{align*}
\bb{\sigma \circ \tau} &= \bb{\sigma} \circ \bb{\tau} &
\bb{A[\sigma]} &= \bb{A} \circ \bb{\sigma} &
\bb{Id_\Gamma} &= Id_{\bb{\Gamma}}  &
\bb{a[\sigma]} &= \bb{a} \bb{\sigma}
\end{align*}

\subsection{Context extension}

The following rules detail how types in a context extends it.


\begin{mathpar}
\inferrule*[right=cont-ext]
    { \Gamma\ \mathtt{context} \\ \Gamma \vdash A \ \mathtt{type}}
{\Gamma. A \ \mathtt{context}}


\inferrule*[right=projection]
    { \elim{ \Gamma\ \mathtt{context} \\} \Gamma \vdash A \ \mathtt{type}}
{{\downarrow} : \Gamma. A \to \Gamma}
\end{mathpar}
\begin{mathpar}

\inferrule*[right=variable]
    { \elim{ \Gamma\ \mathtt{context} \\} \Gamma \vdash A \ \mathtt{type}}
{\Gamma. A \vdash v : A[\downarrow]}


\inferrule*[right=evaluation]
    { \elim{\Gamma\ \mathtt{context} \\ \Gamma \vdash A \ \mathtt{type} \\} \Gamma \vdash a : A}
{{a\eval} : \Gamma \to \Gamma. A}


\inferrule*[right=section]
    { \elim{\Gamma\ \mathtt{context} \\ \Gamma \vdash A \ \mathtt{type} \\ }\Gamma \vdash a : A}
{\downarrow \circ (a\eval) \equiv id_\Gamma: \Gamma \to \Gamma.}


\inferrule*[right=var-eval]
    { \elim{\Gamma\ \mathtt{context} \\ \Gamma \vdash A \ \mathtt{type} \\} \Gamma \vdash a : A}
{\Gamma \vdash v[a\eval] \equiv a : A}


\inferrule*[right=lifting]
    { \elim {\Gamma,\Delta\ \mathtt{context} \\ }\Gamma \vdash A \ \mathtt{type} \\ \sigma : \Delta \to \Gamma }
{{\sigma.A} : \Delta.A[\sigma] \to \Gamma. A}


\inferrule*[right=project-lift]
    { \elim{\Gamma,\Delta\ \mathtt{context} \\} \Gamma \vdash A \ \mathtt{type} \\ \sigma : \Delta \to \Gamma }
{{\downarrow} \circ (\sigma.A) \equiv \sigma \circ {\downarrow} : \Delta.A[\sigma] \to \Gamma}


\inferrule*[right=variable-lift]
    { \elim{\Gamma,\Delta\ \mathtt{context} \\ }\Gamma \vdash A \ \mathtt{type} \\ \sigma : \Delta \to \Gamma }
{ v[f.A] \equiv v : \Delta.A[\sigma]}


\inferrule*[right=eval-lift]
    { \elim{\Gamma,\Delta\ \mathtt{context} \\ \Gamma \vdash A \ \mathtt{type} \\} \Gamma \vdash a : A \\ \sigma : \Delta \to \Gamma }
{ (\sigma.A) \circ (a[\sigma]\eval) \equiv a\eval \circ \sigma : \Delta \to \Gamma.A}


\inferrule*[right= compose-lift]
    { \elim{\Gamma,\Delta,\Theta\ \mathtt{context} \\} \Gamma \vdash A \ \mathtt{type} \\ \sigma : \Delta \to \Gamma \\ \tau : \Theta \to \Delta}
{(\sigma.A) \circ (\tau.A[\sigma]) \equiv (\sigma \circ \tau) . A : \Theta.A[\sigma \circ \tau] \to \Gamma.A}


\inferrule*[right=diagonal-lift]
    { \elim{\Gamma,\Delta\ \mathtt{context} \\} \Gamma \vdash A \ \mathtt{type} }
{ \downarrow.A \circ v\eval \equiv Id_{\Gamma.A}: \Gamma.A \to \Gamma.A}

\end{mathpar}


\begin{align*}
\bb{\Gamma.A} &= \int_{{\bb{\Gamma}}}\bb{A} &
\bb{\downarrow} &= p &
 \bb{v} &= v &
\bb{a\eval} &= \widehat{\bb a} &
\bb{\sigma.A} &= \widetilde{\bb \sigma}
\end{align*}

\elim{
\subsubsection{Reading explicit substitutitions} 

The visually inclined reader will find it helpful to draw diagrams, to understand the explicit substitutions. The more operationally inclined reader might find the following heuristics and examples useful.

The basic substitutions, $\downarrow$ and $t \eval$ and $\sigma.A$, can be seen as modifying the current context. The projection $\downarrow$ removes an assumption. The evaluation $t \eval$, introduces an assumption. The lifting $\sigma.A$ first removes an assumtion of $A[\sigma]$, then performs the action specified by $\sigma$ and then introduces the assumption $A$. The reading order of substitutions is right to left.

\begin{example}
Assume $\Gamma\ \mathtt{context}$, and that $\Gamma \vdash A\ \mathtt{type}$, and $\Gamma \vdash C\ \mathtt{type}$. Then cosider the judgement $\Gamma.C \vdash A[\downarrow]\ \mathtt{type}$, which is a derivable instance of the type transfer rule. Following the above heuristics, the $[\downarrow]$, removes the $.C$ from the context, allowing us to access $A$ from $\Gamma$.

Let us now extend the example with $\Gamma.A \vdash B\ \mathtt{type}$, and $\Gamma.C \vdash D\ \mathtt{type}$, and $\Gamma.C \vdash a:A[\downarrow]$. Consider now the judgement, $\Gamma.C.D \vdash B[\downarrow.A][a\eval][\downarrow]$. Reading from right to left, we obtain the heuristic: $\downarrow$ removes the assumption $D$(leaving $\Gamma.C$). In this context, $a$ is of type $A[\downarrow]$, so $a\eval$ introduces the assumption $A[\downarrow]$. Finally $({\downarrow}.A)$ removes the assumption $A[\downarrow]$, then applies $\downarrow$ which removes the assumption $C$, and adds the assumption $A$, leaving the context $\Gamma.A$, where indeed $B$ is a type.

\end{example}
}

\subsection{The $\Pi$-type}

\begin{mathpar}
\inferrule*[right=$\Pi$-form]
 { \Gamma \ \mathtt{context} \\ \Gamma \vdash A\ \mathtt{type} \\ \Gamma.A\vdash B\ \mathtt{type}}
{\Gamma \vdash \Pi_A B\ \mathtt{type} }

\inferrule*[right=$\Pi$-subst]
 { \Gamma,\Delta \ \mathtt{context} \\ \Gamma \vdash A\ \mathtt{type} \\ \Gamma.A\vdash B\ \mathtt{type} \\ \sigma : \Delta \to \Gamma}
{\Gamma \vdash \Pi_A B [\sigma] \equiv \Pi_{A[\Sigma]}B[\sigma.A] }

\inferrule*[right=$\Pi$-intro]
 { \Gamma.A \vdash b : B}
{\Gamma \vdash \lambda b : \Pi_A B }

\inferrule*[right=$\Pi$-elim]
 { \Gamma \vdash f : \Pi_A B}
{\Gamma .A \vdash \operatorname{apply} f : B }

\inferrule*[right=$\Pi$-comp]
 { \Gamma.A \vdash b : B}
{\Gamma .A \vdash \operatorname{apply}(\lambda b) \equiv b : B }

\end{mathpar}

\begin{align*}
  \bb{\Pi_A B} &= \Pi_{\bb{A}}\bb{B} &
  \bb{\lambda f} &= \lambda \bb{f} &
  \bb{\operatorname{apply}} &= Ap
\end{align*}

\subsection{Other types}

These types correspond to the constructions with the same names in subsection \ref{sigma-modell}.

\begin{description}
\item[The $\Sigma$-type]

\begin{mathpar}
\inferrule*[right=$\Sigma$-form]
 { \Gamma \ \mathtt{context} \\ \Gamma \vdash A\ \mathtt{type} \\ \Gamma.A\vdash B\ \mathtt{type}}
{\Gamma \vdash \Sigma_A B\ \mathtt{type} }

\inferrule*[right=$\Sigma$-intro]
{ \Gamma.A\vdash B\ \mathtt{type}}
{\Gamma.A.B \vdash \operatorname{pair} : \Sigma_A B [\downarrow][\downarrow]}

\elim{
\inferrule*[right=$\Sigma$-elim]
 { \Gamma.\Sigma_A B \vdash C\ \mathtt{type} \\ \Gamma.A.B \vdash c_0 : C[(\downarrow \circ \downarrow).\Sigma_A B][\operatorname{pair}\eval]}
{\Gamma. \Sigma_A B \vdash \operatorname{rec}_\Sigma \, c_0 : C}

\inferrule*[right=$\Sigma$-comp]
 { \Gamma.\Sigma_A B \vdash C\ \mathtt{type} \\ \Gamma.A.B \vdash c_0 : C[(\downarrow \circ \downarrow).\Sigma_A B][\operatorname{pair}\eval]}
{\Gamma.A.B \vdash (\operatorname{rec}_\Sigma \, c_0)[(\downarrow \circ \downarrow).\Sigma_A B][\operatorname{pair}\eval]  \equiv  c : C[(\downarrow \circ \downarrow).\Sigma_A B][\operatorname{pair}\eval ] }
}
\end{mathpar}

\elim{
\begin{align*}
  \bb{\Sigma_A B} &= \Sigma_{\bb{A}}\bb{B} &
  \bb{\operatorname{pair}} &= \operatorname{pair} \elim{&
  \bb{\operatorname{rec}_\Sigma} &= \operatorname{rec}_\Sigma}
\end{align*}}

\item[The indentity type]

\begin{mathpar}
\inferrule*[right=$Id$-form]
 { \Gamma \ \mathtt{context} \\ \Gamma \vdash A\ \mathtt{type}}
{\Gamma .A .A[\downarrow]\vdash Id_A\ \mathtt{type} }

\inferrule*[right=$Id$-intro]
{ \Gamma \vdash A\ \mathtt{type}}
{\Gamma.A \vdash \operatorname{refl} : Id_A [v \eval]}

\elim{
\inferrule*[right=$Id$-elim]
 { \Gamma.A.A[\downarrow].Id_A \vdash C\ \mathtt{type} \\ \Gamma.A \vdash c_0 : C[(v \eval).Id_A][\operatorname{refl}\eval]}
{\Gamma.A.A[\downarrow].Id_A \vdash \operatorname{rec}_{Id} \, c_0 : C}

\inferrule*[right=$Id$-comp]
 { \Gamma.A.A[\downarrow].Id_A \vdash C\ \mathtt{type} \\ \Gamma.A \vdash c_0 : C[(v \eval).Id_A][\operatorname{refl}\eval]}
{ \Gamma.A \vdash (\operatorname{rec}_{Id} \, c_0)[(v \eval).Id_A][\operatorname{refl}\eval] \equiv c_0 : C[(v \eval).Id_A][\operatorname{refl}\eval] }
}

\end{mathpar}

\elim{
\begin{align*}
  \bb{Id_A} &= Id_{\bb{A}}\bb{B} &
  \bb{\operatorname{refl}} &= \operatorname{refl} \elim{&
  \bb{\operatorname{rec}_{Id}} &= \operatorname{rec}_{Id}}
\end{align*}
}

\item[The $+$-type]

\begin{mathpar}
\inferrule*[right=$+$-form]
 {\elim{ \Gamma \ \mathtt{context} \\} \Gamma \vdash A\ \mathtt{type} \\ \Gamma \vdash B\ \mathtt{type}}
{\Gamma \vdash A+B\ \mathtt{type} }

\inferrule*[right=$+$-intro-left]
{ \elim{\Gamma \ \mathtt{context} \\ \Gamma \vdash A\ \mathtt{type} \\ \Gamma \vdash B\ \mathtt{type}}}
{\Gamma.A \vdash \operatorname{left} : (A + B)[\downarrow] }

\inferrule*[right=$+$-intro-right]
{ \elim{\Gamma \ \mathtt{context} \\ \Gamma \vdash A\ \mathtt{type} \\ \Gamma \vdash A\ \mathtt{type}}}
{\Gamma.B \vdash \operatorname{right} : (A + B)[\downarrow]}
\elim{
\inferrule*[right=$+$-elim]
 { \Gamma.(A+B) \vdash C\ \mathtt{type} \\ \Gamma.A \vdash c_0 : C[(\downarrow).(A+B)][\operatorname{left}] \\ \Gamma.B \vdash c_1 : C[(\downarrow).(A+B)][\operatorname{right}] }
{ \Gamma.(A+B) \vdash \operatorname{rec}_{+}\, c_0 \, c_1 : C}

\inferrule*[right=$+$-comp-left]
 {  \Gamma.(A+B) \vdash C\ \mathtt{type} \\ \Gamma.A \vdash c_0 : C[(\downarrow).(A+B)][\operatorname{left} \eval] \\ \Gamma.B \vdash c_1 : C[(\downarrow).(A+B)][\operatorname{right} \eval] }
{ \Gamma.A \vdash (\operatorname{rec}_{+}\, c_0 \, c_1)[(\downarrow).(A+B)][\operatorname{left}\eval] \equiv c_0 : C[(\downarrow).(A+B)][\operatorname{left}\eval] }

\inferrule*[right=$+$-comp-right]
 {  \Gamma.(A+B) \vdash C\ \mathtt{type} \\ \Gamma.A \vdash c_0 : C[(\downarrow).(A+B)][\operatorname{left} \eval] \\ \Gamma.B \vdash c_1 : C[(\downarrow).(A+B)][\operatorname{right} \eval] }
{ \Gamma.B \vdash (\operatorname{rec}_{+}\, c_0 \, c_1)[(\downarrow).(A+B)][\operatorname{right}\eval] \equiv c_1 : C[(\downarrow).(A+B)][\operatorname{right}\eval] }
}
\end{mathpar}

\item[The $0$-type]

\begin{mathpar}
\inferrule*[right=$0$-formation]
{\Gamma\ \mathtt{context}}
{\Gamma \vdash 0\ \mathtt{type}}

\inferrule*[right=$0$-elimination]
{\Gamma.0 \vdash C\ \mathtt{type}}
{\Gamma.0\vdash \operatorname{rec}_0 : C}
\end{mathpar}

\item[The $1$-type]

\begin{mathpar}
\inferrule*[right=$1$-formation]
{\Gamma\ \mathtt{context}}
{\Gamma \vdash 1\ \mathtt{type}}

\inferrule*[right=$*$-intro]
{\Gamma\ \mathtt{context}}
{\Gamma \vdash * : 1}

\inferrule*[right=$1$-elimination]
{\Gamma.0 \vdash C\ \mathtt{type} \\ \Gamma \vdash c_0 : C[*\eval]}
{\Gamma.1\vdash \operatorname{rec}_1 \, c_0 : C}
\end{mathpar}

\end{description}

\subsection{Context and substitution constants}
For each natural number $n$, and for each $i$,$j$ and $n$, such that $i < j\le n+2$
\begin{mathpar}
\inferrule*[right=$\Delta_n$-form]
    { }
{\Delta_n \ \mathtt{context}}

\inferrule*[right=$d_i^n$-intro]
    { }
{d_i^n : \Delta_n \to \Delta_{n+1}}

\label{simplicia-identity}
\inferrule*[right=$d_i^n$-comp]
    { }
{d_i^{n+1} \circ d_j^n \equiv d_{j-1}^{n+1} \circ d_i^n: \Delta_n \to \Delta_{n+2}}
\end{mathpar}

\begin{align*}
  \bb{\Delta_n} &= \Delta_n&
  \bb{ d_i^n} &= d_i^n
\end{align*}

\subsection{Soundness}

\begin{theorem}
\label{theorem:soundness}
The intended interpretation of the type theory is sound. That is, all rules for equality holds true in the interpretation given by $\bb{-}$.
\end{theorem}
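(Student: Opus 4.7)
The plan is to proceed by induction on the derivation of a judgement, checking that each rule of the type theory is validated by the interpretation $\bb{-}$. Since the eight judgement forms come with intended interpretations in the model (contexts as schemas, types as relational instances, terms as full tuples, substitutions as display morphisms), the verification amounts to showing that for every rule the side conditions are preserved by $\bb{-}$ and that every equational rule reduces to an equation already established in Section \ref{Subsection: simplicial databases}. I would organise the case analysis into four blocks matching the rule systems.

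First, for the substitution and context-extension rules, essentially everything has been packaged into Lemma \ref{Lemma: Soundness for substitution}: subst-assoc is associativity of composition in \thry{S}; type-transfer-comp and term-transfer-comp are functoriality of $\Rel{-}$; section is $p \circ \hat{t} = \mathrm{id}$ from part (1); var-eval is $t = v[\hat{t}]$; project-lift, variable-lift, eval-lift correspond to parts (2a)-(2c); compose-lift is part (3); and diagonal-lift is part (4). The fact that $\widehat{\bb{a}}$ and $\widetilde{\bb{\sigma}}$ are display morphisms is built into the definitions in Section \ref{subsection: relational instances}, which closes the type-transfer and lifting cases inductively.

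Second, for the type-former rules: the $\Pi$-substitution equation is exactly Lemma \ref{Lemma: Pi regler}(2), the computation rule $\Pi$-comp is part (1), and $\Pi$-intro/$\Pi$-elim are interpreted by the $\lambda$/$\mathrm{Ap}$ pair. The $\Sigma$-, $Id$-, $+$-, $0$- and $1$-rules follow by directly unpacking the set-theoretic definitions of Section \ref{Subsection: simplicial databases}: the stability equation $(\Sigma_J G)[f] = \Sigma_{J[f]} G[\tilde{f}]$ is recorded there, and the identity and coproduct constructions are similarly stable under display substitution by Lemma \ref{Lemma: display is tuple preserving}. For the constants block, $\Delta_n \in \thry{S}$ is immediate, each $d_i^n$ is already verified as a morphism in \thry{S} in Section \ref{subsection: Finite simplicial complexes and schemas}, and the simplicial identity is exactly the one recorded there.

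The main technical obstacle is the interaction between tuplification and substitution. The operations $\Sigma_J$, $\Pi_J$, $\mathrm{Id}_J$, and $+$ are defined in two stages -- a natural description on families, then passage through the canonical tuple-form isomorphism -- so the equational rules hold strictly only if one keeps every interpreted instance on tuple form throughout the induction. To handle this I would strengthen the inductive hypothesis: every interpreted type $\bb{A}$ is on tuple form, and every interpreted substitution is display. Lemma \ref{Lemma: display is tuple preserving} then guarantees that $\bb{A[\sigma]} = \bb{A} \circ \bb{\sigma}$ remains on tuple form, so that equations such as $\Pi$-subst, which in general only hold up to canonical isomorphism, are forced to be strict equalities because both sides agree on attributes and are on tuple form. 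A closely related but minor subtlety is checking, for each context-extension rule, that $p$, $\hat{t}$, and $\tilde{\sigma}$ are display; this is immediate from their definitions, and the induction therefore closes cleanly.
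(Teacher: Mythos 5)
Your proposal is correct and follows essentially the same route as the paper: the substitution and context-extension equations are discharged by Lemma \ref{Lemma: Soundness for substitution}, the $\Pi$-rules by Lemma \ref{Lemma: Pi regler}, and the remaining type formers by unwinding the definitions of Section \ref{Subsection: simplicial databases}. The tuplification/display invariant you propose to carry through the induction is exactly the standing assumption the paper sets up in its ``On tuplification and display maps'' discussion (all context morphisms are interpreted as display maps, all instances kept on tuple form, and tuple-form instances agreeing on attributes are identical), so you have merely made explicit what the paper leaves implicit.
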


\begin{proof}
The equalities for substitution are verified in Lemma \ref{Lemma: Soundness for substitution}, and  the rules for $\Pi$ in Lemma \ref{Lemma: Pi regler}. The remaining equations are routine verification.  



\end{proof}

\subsection{Instance specification as type introduction}\label{subsection:Instance specification as type introduction}

The intended interpretation of $\Gamma \vdash A\ \mathtt{type}$ is that $A$ is an instance of the schema $\Gamma$. But context extension allows us to view every instance as a schema in its own right. So for every instance $\Gamma \vdash A\ \mathtt{type}$, we get a schema $\Gamma.A$. It turns out that the most convenient way to specify a schema, is by introducing a new type/instance over one of the simplex schemas $\Delta_n$.

To specify a schema, with a maximum of $n$ attributes, may be seen as introducing a type in the context $\Delta_n$. A relation variable with $k$ attributes in the schema is introduced as an element of the schema substituted into $\Delta_k$. Names of attributes are given as elements of the schema substituted down to $\Delta_0$.

\begin{example}
We construct the rules of the schema $S$ of \ref{Subsubsection: Examples}, with attributes $A$, $B$ and $C$, with two tables $R$ and $Q$. The introduction rules tells us the names of tables and attributes in S. 

\begin{center}
\begin{tabular}{lcr}
$ \Delta_2 \vdash S\ \mathtt{type} $&\hspace{15mm} &$ \Delta_0 \vdash A \equiv R[d_1] : S[d_2 \circ d_1] $\\
$ \Delta_0\vdash A : S[d_2 \circ d_1] $& &$\Delta_0 \vdash B \equiv R[d_0] : S[d_2 \circ d_0]$ \\
$ \Delta_0\vdash B : S[d_2 \circ d_0]$ &  & $\Delta_0 \vdash B \equiv Q[d_1] : S[d_0 \circ d_1] $\\
$ \Delta_0\vdash C : S[d_0 \circ d_0]$ & & $\Delta_0 \vdash C \equiv Q[d_0] : S[d_2 \circ d_0]$ \\
$ \Delta_1 \vdash R : S[d_2] $&  &\\
 $\Delta_1 \vdash Q : S[d_0] $& &
\end{tabular}
\end{center}

From these introduction rules, we can generate an elimination rule. The elimination rule tells us how to construct full tuples in an instance over the schema S.

\begin{mathpar}
 \inferrule*[right=$S$-elim]
  {\Delta_2.  S\vdash I\ \mathtt{type}\\ \Delta_0 \vdash a : I[(d_2 \circ d_1).S] [A\eval] \\ \Delta_0 \vdash b : I[(d_2 \circ d_0).S] [B\eval] \\ \Delta_0 \vdash c : I[(d_0 \circ d_0).S] [C\eval] \\ \Delta_1 \vdash r : I[d_2.S] [R \eval] \\ \Delta_1 \vdash q : I[d_0.S] [Q \eval] \\ \Delta_0 \vdash r[d_1] \equiv a \\  \Delta_0 \vdash r[d_0] \equiv b \\ \Delta_0 \vdash q[d_1] \equiv b \\ \Delta_0 \vdash q[d_0] \equiv c }
 {\Delta_2.S \vdash \operatorname{rec}_S \, a \, b \, c \, r  \, q : I }
\end{mathpar}

Another interpretation of the elimination rule is that it formulates that the schema S contains only what is specified by the above introduction rules; it specifies the schema up to isomorphism.

\end{example}

An instance of a schema is a type depending in the context of the schema. Therefore instance specification is completely analoguous to schema specification. The following example shows how to introduce the instance $I$ of \ref{Subsubsection: Examples}.

\begin{example}
Let $S$ be the schema from the previous example. The following set of introductions presents an instance $I$ of $S$.

\begin{center}
\begin{tabular}{lcl}
$ \Delta_2.S \vdash I\ \mathtt{type} $& &\\
$ \Delta_0 \vdash a : I[(d_2 \circ d_1).S][A\eval] $&\hspace{15mm} & $\Delta_0 \vdash r_0[d_1] \equiv a : I[(d_2 \circ d_1).S][A\eval] $\\
$ \Delta_0 \vdash a' : I[(d_2 \circ d_1).S][A\eval]$ & & $\Delta_0  \vdash r_0[d_0] \equiv b : I[(d_2 \circ d_1).S][B \eval]$ \\
$ \Delta_0 \vdash b : I[(d_2 \circ d_0).S][B\eval]$ & &$\Delta_0 \vdash r_1[d_1] \equiv a' : I[(d_2 \circ d_1).S][A\eval] $\\
$ \Delta_0 \vdash d : I[(d_2 \circ d_0).S][B\eval] $& &$\Delta_0  \vdash r_1[d_0] \equiv b : I[(d_2 \circ d_1).S][B \eval]$ \\
 $\Delta_0 \vdash c : I[(d_0 \circ d_0).S][C\eval] $& & $\Delta_0 \vdash q_0[d_1] \equiv b : I[(d_2 \circ d_1).S][A\eval] $\\
$ \Delta_0 \vdash e : I[(d_0 \circ d_0).S][C\eval] $& & $\Delta_0 \vdash q_0[d_0] \equiv c : I[(d_2 \circ d_1).S][A\eval] $\\
$ \Delta_1 \vdash r_0 : I[(d_2).S][R\eval]$ & & $\Delta_0  \vdash q_1[d_1] \equiv d : I[(d_2 \circ d_1).S][B \eval]$ \\
$ \Delta_1 \vdash r_1 : I[(d_2).S][R\eval]$ & & $ \Delta_0  \vdash q_1[d_0] \equiv e : I[(d_2 \circ d_1).S][B \eval]$ \\
$ \Delta_1 \vdash q_0 : I[(d_0).S][Q\eval] $\\
$ \Delta_1 \vdash q_1 : I[(d_0).S][Q\eval]$
\end{tabular}
\end{center}

The above is clearly very verbose, and can be compressed, at the cost of loosing control over the naming of attributes, into the following.

\begin{center}
\begin{tabular}{lcl}
$ \Delta_2.S \vdash I\ \mathtt{type}$ & \hspace{15mm} & \\
 $\Delta_1 \vdash r_0 : I[(d_2).S][R\eval]$  & &$\Delta_0  \vdash r_0[d_0] \equiv r_1[d_0] : I[(d_2 \circ d_1).S][B \eval]$ \\
 $\Delta_1 \vdash r_1 : I[(d_2).S][R\eval]$ & &  $\Delta_0  \vdash q_0[d_1] \equiv r_0[d_0] : I[(d_2 \circ d_1).S][B \eval]$ \\
$ \Delta_1 \vdash q_0 : I[(d_0).S][Q\eval]$ \\
$ \Delta_1 \vdash q_1 : I[(d_0).S][Q\eval]$
\end{tabular}
\end{center}

\end{example}

 A motivation for introducing a formal type theory of databases is for it to provide a query language. The  development and analysis of this query language is future work, but we provide here an example of a query formulated in type theory, illustrating the concept.

\begin{example}{\bf{Queries}}\label{example:queries}

Let the schema $S$ and the instance $I$ be as in the previous examples.  A query is represented as a type, with the terms of the type being the result of the query. We ask for the matching tuples of $R$ and $Q$, i.e\  the tuples in the natural join $R\bowtie Q$. 
This query is represented by the $\Pi$-type of $I$ over $S$ (cf.\ Example \ref{Example: natural join}), which is a type in $\Delta_2$, 
\begin{align*}
 \Delta_2 \vdash \Pi_S I\ \mathtt{type}
\end{align*}
To construct elements of this type, we can apply the constructor $\lambda$ to full tuples of $I$. These may in turn be constructed using the elimination rule of $S$. Thus, the result of the query is:
\begin{align*}
  \Delta_2 \vdash \lambda \operatorname{rec}_S &\, a \,  b \, c \, r_0 \, q_0 : \Pi_S I &
  \Delta_2 \vdash \lambda \operatorname{rec}_S &\, a' \,  b \, c \, r_1 \, q_0 : \Pi_S I
\end{align*}
These terms represent the expected full tuples $\pair{a,b,c}$ and $\pair{a',b,c}$.
\end{example}


\addcontentsline{toc}{section}{References}

\bibliography{references}
\bibliographystyle{kluwer}

\newpage

\section{Appendix}

This appendix has two parts. Part 1 contains several lemmas from Section \ref{section: the model} the proofs of which were omitted in the main text (in the second lemma below we have kept here an earlier version with certain calculations in the statement of the lemma.) Part 2 is a fuller statement of the type theory in Section \ref{Section: The type theory} (and also includes  some heuristic remarks that were omitted from the main text.)

\section*{Part 1}

\begin{lemma}[Lemma \ref{Lemma: relational instances are based posets}]
Let $X$ a (simplicial) schema  and $F:X\to\xalg{FinSet}$ be a functor. Then $\int_X F$ is a (simplicial) schema  and $p:\int_X F\rightarrow X$ a display morphism if and only if $F$ is a relational instance.
\begin{proof} Let $F$ be a relational instance. It is clear that there is at most one morphism between any two objects in $\int_X F$, and it is easy to see that \op{(\int_X F)} is a based poset with base $\cterm{\pair{A,a}\in X\times \set}{A\in X_0,\ a\in F(A)}$, satisfying the condition for being a simplicial complex. Noticing that $\op{(\int_X F)}_n=\cterm{\pair{x,c}\in X\times \set}{x\in X_n,\ c\in F(x)}$ we see that the projection $p:\int_X F\rightarrow X$ is a display morphism.

Conversely, if $F$ is not relational, then condition 1.b of Definition \ref{Def: based poset}  is violated (by any two keys with the same data).
\end{proof}
\end{lemma}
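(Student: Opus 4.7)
The plan is to show that the obvious candidate basis for $\op{(\int_X F)}$ is $B'=\{\langle A,a\rangle : A\in X_0,\ a\in F(A)\}$, and that the three basis axioms of Definition~\ref{Def: based poset}(1) together with the simplicial condition of Definition~\ref{Def: based poset}(3) for $\op{(\int_X F)}$ correspond, under the projection $p$, exactly to the relationality condition on $F$. Throughout I use that a morphism $\langle x,a\rangle\to\langle y,b\rangle$ in $\int_X F$ is a morphism $\delta^x_y\colon x\to y$ with $F(\delta^x_y)(a)=b$, and that under $p$ the set $B_{\leq\langle x,a\rangle}$ (computed in $\op{(\int_X F)}$) is $\{\langle A,F(\delta^x_A)(a)\rangle : A\in B_{\leq x}\}$, which is therefore in canonical bijection with $B_{\leq x}$ via $p$.

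For the forward direction, assume $F$ is relational. Basis axiom (c) is immediate from the above description, and (b) follows because attributes in $X_0$ are pairwise incomparable in $\op{X}$. For axiom (a), I would argue that if $B_{\leq\langle x,a\rangle}\subseteq B_{\leq\langle y,b\rangle}$, then projecting via $p$ yields $B_{\leq x}\subseteq B_{\leq y}$, so $x\leq y$ in $\op{X}$, i.e.\ there is a unique $\delta^{y}_{x}\colon y\to x$; and then the hypothesis $B_{\leq\langle x,a\rangle}\subseteq B_{\leq\langle y,b\rangle}$ together with relationality (which says the family $\{F(\delta^x_A)\}_{A\in B_{\leq x}}$ is jointly injective) forces $F(\delta^y_x)(b)=a$, producing the required morphism $\langle y,b\rangle\to\langle x,a\rangle$. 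The simplicial condition of Definition~\ref{Def: based poset}(3) for $\op{(\int_X F)}$: given $\langle x,a\rangle$ and $Y\subseteq B_{\leq\langle x,a\rangle}$, apply the simplicial condition for $\op{X}$ to $p(Y)\subseteq B_{\leq x}$ to obtain $y\in X$ with $B_{\leq y}=p(Y)$ and $y\leq x$; then set $b:=F(\delta^x_y)(a)$ and check $B_{\leq\langle y,b\rangle}=Y$ using functoriality. Finally, $p$ sends $B_{\leq\langle x,a\rangle}$ bijectively onto $B_{\leq x}$, hence preserves dimension and is display.

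For the converse, I would argue by contrapositive: suppose $F$ is not relational, so there exist $x\in X$ and distinct $a\neq a'$ in $F(x)$ with $F(\delta^x_A)(a)=F(\delta^x_A)(a')$ for every $A\in B_{\leq x}$. Then $B_{\leq\langle x,a\rangle}=B_{\leq\langle x,a'\rangle}$ as subsets of $B'$, yet $\langle x,a\rangle$ and $\langle x,a'\rangle$ are distinct incomparable objects of $\int_X F$. This is a direct violation of basis axiom (a) (which would force them equal), so $\op{(\int_X F)}$ fails to be a based poset, let alone a simplicial complex; in particular $p$ cannot be a display morphism between simplicial schemas.

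The main obstacle is the forward direction, and specifically verifying the simplicial condition together with axiom (a): one must translate relationality into the statement that a morphism in $\int_X F$ exists whenever the downward-closed sets of attribute components are contained in one another. Once this translation is in place, the rest is a bookkeeping exercise using the simplicial structure of $\op{X}$ and the functoriality of $F$.
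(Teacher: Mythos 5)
Your proposal is correct and follows essentially the same route as the paper's own (very terse) proof: take the candidate basis $\cterm{\pair{A,a}}{A\in X_0,\ a\in F(A)}$, check the based-poset and simplicial-complex axioms for $\op{(\int_X F)}$ using the joint injectivity of the $F(\delta^x_A)$, note that $p$ restricts to a bijection on generator sets and hence preserves dimension, and observe for the converse that two distinct keys with identical data make the basis axioms fail. You merely supply the details the paper dismisses as ``easy to see,'' and your identification of axiom (a) (the equivalence of $\leq$ with inclusion of generator sets) as the condition violated in the converse is the mathematically accurate one, since incomparability of the basis elements holds regardless of relationality.
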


\begin{lemma}[Lemma \ref{Lemma: Soundness for substitution}]
The following equations hold:
\begin{enumerate}
\item For $X$ in \thry{S} and $I\in \Rel{X}$ and $t\in \Trm{X}{I}$ we have $p\circ \hat{t}=\mathrm{id}_X$.
\item For  $X\in \thry{S}$  and $J\in \Rel{X}$ and $t\in \Trm{X}{J}$ we have $t=v[\hat{t}]$.
\item For $f:X\to<150>Y$ in \thry{S} and $J\in \Rel{Y}$ we have $\kortinlpil{p\circ \tilde{f}= f\circ p}{\kanskje{X}{J[f]}}{Y}$.
\item For $f:X\to<150>Y$ in \thry{S} and $J\in \Rel{Y}$ and $t\in \Trm{Y}{J}$ we have $\kortinlpil{\tilde{f}\circ\widehat{t[f]}=\hat{t}\circ f}{X}{\kanskje{Y}{J}}$.
\item For $f:X\to<150>Y$ in \thry{S} and $J\in \Rel{Y}$ we have $v_J[\tilde{f}](x,a)=v_J(fx,a)=v_{J[f]}(x,a)$, whence $v_J[\tilde{f}]=v_{J[f]}$.
\item For $f:X\to<150>Y$ and $g:Y\to<150>Z$  in \thry{S} and $J\in \Rel{Z}$ we have $\tilde{g}(\tilde{f}(x,a))=\tilde{g}(fx,a)=(gfx,a)=\widetilde{g\circ f}(x,a)$ whence $\widetilde{g\circ f}=\tilde{g}\circ \tilde{f}$.
\item For $X \in \thry{S}$ and $J\in \Rel{X}$ we have $\tilde{p}\circ \hat{v}(x,a)=\tilde{p}(\pair{x,a},a)=\pair{x,a}$ whence $\tilde{p}\circ \hat{v}=\mathrm{Id}_{\kanskje{X}{J}}$. 
\end{enumerate}
\begin{proof}
The first two are immediate. For the third, $v[\hat{t}]$ is the full tuple of $J[p][\hat{t}]=J[p\circ \hat{t}]=J[\mathrm{id}_X]$ defined by $x\mapsto v(\hat{t}(x))=v(\pair{x,t(x)})=t(x)$. For the fourth, we have $p\circ \tilde{f}(x,a)=p(f(x),a)=f(x)=f\circ p(x,a)$. Finally for the fifth, we have $\tilde{f}\circ\widehat{t[f]}(x)=\tilde{f}(x,t(fx))=\pair{f(x),t(fx)}=\hat{t}(fx)=\hat{t}\circ f(x)$.
\end{proof}
\end{lemma}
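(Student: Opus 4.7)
The plan is to verify each of the seven equations by directly unfolding the definitions of the maps involved and computing pointwise. Recall the explicit formulas from the preceding definitions: $\hat{t}(x) = \pair{x,t(x)}$; the canonical projection $p$ sends $\pair{x,a}$ to $x$; the lifted map $\tilde{f}\colon \kanskje{X}{J[f]} \to \kanskje{Y}{J}$ is given by $\pair{x,a}\mapsto \pair{f(x),a}$; and the generic full tuple $v_J$ is given by $\pair{x,a}\mapsto a$. Since the morphisms in \thry{S} appearing here are all described by uniform formulas on elements, an equality of maps reduces to an equality at each $x$ (or $\pair{x,a}$), which is what I would check.

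Under this strategy, equations (1), (2), and (3) are one-step computations: $p(\hat{t}(x)) = p(\pair{x,t(x)}) = x$; $v[\hat{t}](x) = v(\pair{x,t(x)}) = t(x)$; and $p(\tilde{f}(\pair{x,a})) = p(\pair{f(x),a}) = f(x) = f(p(\pair{x,a}))$. For (4), I would first unfold $\widehat{t[f]}(x) = \pair{x,\,t(f(x))}$ using the definition of substitution of a full tuple, and then apply $\tilde{f}$ to obtain $\pair{f(x),\,t(f(x))} = \hat{t}(f(x))$. Equations (5), (6), and (7) are the pointwise identities already exhibited in the statement of the lemma; each amounts to a single substitution of the defining formula for $v$, $\tilde{f}$, or $\hat{v}$, chained together.

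The only subtlety worth flagging is bookkeeping of domains and codomains, especially in (7), where $\hat{v}$ is the induced section associated to $v_J \in \Trm{\kanskje{X}{J}}{J[p]}$ and so lands in $\kanskje{\kanskje{X}{J}}{J[p]}$, with $\tilde{p}$ then pulling back into $\kanskje{X}{J}$; one must also note that in (4) the map $\widehat{t[f]}$ is defined using the instance $J[f]$ on $X$, which is precisely why $\tilde{f}$ can be composed with it. Once the types are correctly lined up, each verification collapses to a rewrite by the defining formula, so the proof is essentially parallel definitional unfolding carried out seven times; I do not expect any step to require more than this.
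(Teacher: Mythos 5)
Your proposal is correct and follows essentially the same route as the paper: both proofs verify each equation by pointwise unfolding of the defining formulas for $\hat{t}$, $p$, $\tilde{f}$, and $v$, with items (5)--(7) already carrying their computations in the statement. Your extra care about domains and codomains in (4) and (7) is sound but does not change the argument.
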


\begin{lemma}[Lemma \ref{Lemma: display is tuple preserving}]
Let  $f:X\to<150>Y$ in \thry{S}. Then $f$ is display if and only if for all $J\in \Rel{Y}$ on tuple form $J[f]$ is also on tuple form.
\begin{proof} Suppose $f$ is display. For $x\in X$, we have $J[f](x)=J[fx]$. Since $f$ is display and thus restricts to a bijection $B_{\leq x}\to<150> B_{\leq fx}$, $J[f](x)=J[fx]$ is a set of tuples on $\Pi_{A\in B_{\leq x}}J\circ f(A)$. Conversely, suppose $f$ is not display. Then there exists $x\in X$ such that $x$ has two attributes, $B_{\leq x}=\{A,B\}$, and  $f(A)=f(B)=f(x)$. Let $J\in \Rel{Y}$ be an instance such that $J(fA)=\{a\}$. Then $J\circ f(x)=\{a\}$ and not the tuple \pair{a,a}.      
 
\end{proof}
\end{lemma}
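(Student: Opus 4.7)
The plan is to proceed by induction on the derivations of the judgmental equalities, handling each family of rules according to which structural piece of the indexed category $\Rel{-}$ it concerns. Before doing so, I would first check that $\bb{-}$ is well-defined: that $\bb{\Gamma}$ is always a schema in $\thry{S}$, that $\bb{\sigma}$ is always a display morphism (so that Lemma \ref{Lemma: display is tuple preserving} applies and $\bb{A[\sigma]} = \bb{A}\circ \bb{\sigma}$ is again on tuple form and hence a bona fide relational instance), and that $\bb{a}$ is a full tuple of $\bb{A}$. This bookkeeping is routine but essential because the later equations are equalities on the nose between instances on tuple form, and such equalities only make sense once we know both sides live in the same fibre.

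The substitution and context-extension block of equations (subst-assoc, subst-id-left/right, type- and term-transfer-comp/id, section, var-eval, project-lift, variable-lift, eval-lift, compose-lift, diagonal-lift, and the $\Pi$-subst rule as far as its outer structure is concerned) is handled essentially by Lemma \ref{Lemma: Soundness for substitution}: each rule unfolds under $\bb{-}$ to one of the seven equations recorded there, read off the defining clauses $\bb{\Gamma.A} = \kanskje{\bb{\Gamma}}{\bb{A}}$, $\bb{\downarrow} = p$, $\bb{v} = v$, $\bb{a\eval} = \widehat{\bb{a}}$, $\bb{\sigma.A} = \widetilde{\bb{\sigma}}$. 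So this block reduces, case by case, to a literal rewriting.

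Next, the $\Pi$-block (form, subst, intro, elim, comp) is discharged by Lemma \ref{Lemma: Pi regler}, once we observe that the $\Pi$-subst rule is exactly $(\Pi_J G)[f] = \Pi_{J[f]}G[\tilde f]$, $\Pi$-comp is $\mathrm{Ap}_{\lambda t} = t$, and the substitution equation for $\lambda$ is $(\lambda t)[f] = \lambda(t[\tilde f])$. For the remaining type formers I would unpack the definitions in Section \ref{Subsection: simplicial databases} directly: for $\Sigma$, the equation $(\Sigma_J G)[f] = \Sigma_{J[f]}G[\tilde f]$ is the one explicitly asserted there, and the computation rules for $\operatorname{pair}$ and $\operatorname{rec}_\Sigma$ are immediate from the pointwise definition $x \mapsto \cterm{\pair{a,b}}{a\in J(x),\ b\in G(x,a)}$; for $\mathrm{Id}$, $+$, $0$ and $1$ the definitions are pointwise and the rules reduce to set-level identities (e.g.\ $\operatorname{refl}$ hits the only element of $\mathrm{Id}_J[\hat v]$, and the coproduct tags uniquely determine which branch of $\operatorname{rec}_+$ applies). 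Finally, the simplex/face map block is handled by the simplicial identity $d^{n+1}_i\circ d^n_j = d^{n+1}_{j-1}\circ d^n_i$ recorded in Section \ref{subsection: Finite simplicial complexes and schemas}.

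The main obstacle, to my mind, is not any individual equation but the uniform maintenance of the display-map discipline: every lifting $\tilde{\bb{\sigma}}$, every section $\widehat{\bb{a}}$, and every projection $p$ must be display for $\bb{-}$ to keep producing relational instances on tuple form; simultaneously, the pointwise constructions $\Pi$, $\Sigma$, $+$, $\mathrm{Id}$ pass out of tuple form and must be retuplified, and one must check that this retuplification commutes with all the substitutions appearing on the left-hand sides of the equations. Concretely, the delicate point is to verify that in equations such as $(\Pi_J G)[f] = \Pi_{J[f]}G[\tilde f]$ the canonical tuplification isomorphism on each side does not merely produce isomorphic instances but literally equal ones; this is where Lemma \ref{Lemma: display is tuple preserving} (applied to the display morphism $\bb{f}$) is indispensable, and it is the one place where a genuine argument, rather than a definitional unfolding, is required.
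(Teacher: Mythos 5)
Your proposal does not prove the statement at issue. Lemma \ref{Lemma: display is tuple preserving} is a purely combinatorial equivalence about the indexed category itself: a morphism $f\colon X\rightarrow Y$ in $\thry{S}$ is display if and only if $J[f]=J\circ f$ is on tuple form for every $J\in\Rel{Y}$ on tuple form. What you have written is instead an outline of the proof of the soundness theorem (Theorem \ref{theorem:soundness}): an induction over derivations of the judgmental equalities, a case split over the substitution, context-extension, $\Pi$, $\Sigma$, $+$, $\mathrm{Id}$, $0$, $1$ and simplex blocks, and appeals to Lemmas \ref{Lemma: Soundness for substitution} and \ref{Lemma: Pi regler}. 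Indeed, you explicitly \emph{invoke} Lemma \ref{Lemma: display is tuple preserving} as a known ingredient (``this is where Lemma \ref{Lemma: display is tuple preserving} \ldots\ is indispensable''), so nothing in your text establishes either direction of the equivalence you were asked to prove. That is the gap: the entire argument is aimed at a different theorem.

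For the record, the intended argument is short and has two independent halves. Forward direction: if $f$ is display, then for each $x\in X$ the map $f$ restricts to a bijection $B_{\leq x}\rightarrow B_{\leq f(x)}$, so the elements of $J[f](x)=J(f(x))$, which by hypothesis are tuples in $\Pi_{A'\in B_{\leq f(x)}}J(A')$, are (under the induced reindexing) tuples in $\Pi_{A\in B_{\leq x}}J(f(A))=\Pi_{A\in B_{\leq x}}J[f](A)$, i.e.\ $J[f]$ is on tuple form. Converse, by contraposition: if $f$ is not display, there is an $x\in X$ with two distinct attributes $A,B\in B_{\leq x}$ such that $f(A)=f(B)$; choosing $J$ on tuple form with $J(f(A))=\{a\}$ a singleton, the set $J[f](x)=J(f(x))$ contains an element that is not an element of $\Pi_{C\in B_{\leq x}}J[f](C)$ (it is $a$, not the tuple $\pair{a,a}$), so $J[f]$ fails to be on tuple form. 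Neither half appears in your proposal, so as a proof of this lemma it cannot be salvaged by local repairs; it must be replaced.
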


\begin{lemma}[Lemma \ref{Lemma: Pi regler}]
 Let $\kortinlpil{f}{Z}{X}$ in be a display morphism in  \thry{S}, $J\in \Rel{X}$,  $G\in\Rel{\kanskje{X}{J}}$, and $t\in \Trm{\kanskje{X}{J}}{G}$.
 \begin{enumerate}
 \item $\mathrm{Ap}_{\lambda t}= t$.
 \item $(\Pi_JG)[f]=\Pi_{J[f]}G[\tilde{f}]$.
 \item $(\lambda t)[f]=\lambda(t[\tilde{f}])$.
 \end{enumerate}
\begin{proof}
\begin{enumerate}
\item (With reference to $\Xi_JG$) we have that $\mathrm{Ap}_{\lambda t}(x,a)= \lambda t(x)_{x,a}=t(x,a) $, whence $\mathrm{Ap}_{\lambda t}= t$.
\item Let $A$ be an attribute of $Z$. Then 
\begin{align*}
(\Pi_JG)[f](A)&=\Pi_JG(fA)=\Pi_{a\in J(fA)}G(fA,a)=\Pi_{a\in J[f](A)}G[\tilde{f}](A,a)\\
                                           &=\Pi_{J[f]}G[\tilde{f}](A)
\end{align*}
Hence it suffices to show that $(\Xi_JG)[f]\cong\Xi_{J[f]}G[\tilde{f}]$.   Let $c\in \Pi_{J[f]}G[\tilde{f}]$. Thus $c$ is a function assigning for each $z'\leq z$ and $a\in J[f](z')=J(fz')$ an element $c_{z',a}\in G[\tilde{f}](z',a)=G(fz',a)$ such that condition (1) is satisfied. On the other hand, an element $c\in (\Xi_JG)[f](z)$ is an element of $\Pi_JG(fz)$, and thus an a function which for every $y\leq fz'$ and $a\in J(y)$ assigns an element $c_{y,a}\in G(y,a)$ satisfying condition (1). The isomorphism now follows from $f$ being display, since then the function 
\begin{equation}\label{Eq: 2}
\bfig  \morphism|a|<2500,0>[P_z=\cterm{\pair{z',a}}{z'\leq z,\ a\in J(A(fz')) }`\cterm{\pair{y,a}}{y\leq fz,\ a\in J(y)}=P_{fz};\pair{z',a}\mapsto\pair{fz',a}]  \efig\end{equation}   
is bijective.
\item We work with the definition of $\Xi$ and show that the two full tuples commute with the isomorphism induced by (\ref{Eq: 2}). But this follows by inspection since,  for $z\in Z$, we have $(\lambda t)[f](z)= \lambda t(fz)=(t(y,a))_{P_{fz}}$ and    $(\lambda(t[\tilde{f}])(z)=(t[\tilde{f}](z',a))_{P_{z}}=(t(fz',a))_{P_z}$.
\end{enumerate}
\end{proof}
\end{lemma}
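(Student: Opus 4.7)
My plan is to verify each of the three equations by directly unfolding the explicit definitions of $\Xi_JG$, $\lambda$, and $\mathrm{Ap}$ from Section \ref{Subsection: simplicial databases}, working with the preliminary (pre-tuplified) construction $\Xi_JG$ whenever possible and transferring the conclusions across tuplification at the end.

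For item (1), the plan is a one-line calculation. By definition $(\lambda t)(x)$ is the family $(t(y,a))_{(y,a)\in P_x}\in\Xi_JG(x)$, and by definition $\mathrm{Ap}_s\pair{x,a}=s(x)_{x,a}$. Composing, $\mathrm{Ap}_{\lambda t}\pair{x,a}=(\lambda t)(x)_{x,a}=t(x,a)$, which is exactly $t$ as a full tuple. Tuplification is harmless here because the indexing is natural in the family structure.

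For item (2), the key observation is that since $\kortinlpil{f}{Z}{X}$ is display, it restricts for each $z\in Z$ to a bijection $B_{\leq z}\to B_{\leq fz}$, hence to an order-isomorphism $({\downarrow}z)\to({\downarrow}fz)$, which in turn induces the bijection
\begin{equation*}
\phi_z\colon P_z=\{\pair{z',a}\mid z'\leq z,\ a\in J(fz')\}\longrightarrow P_{fz},\qquad \pair{z',a}\mapsto\pair{fz',a}.
\end{equation*}
An element of $(\Xi_JG)[f](z)=\Xi_JG(fz)$ is a family $(c_{y,b})_{P_{fz}}$ satisfying condition (1); an element of $\Xi_{J[f]}G[\tilde{f}](z)$ is a family $(c_{z',a})_{P_z}$ with $c_{z',a}\in G[\tilde{f}](z',a)=G(fz',a)$ satisfying the analogous condition. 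Precomposition with $\phi_z$ gives a bijection between these two, and the condition (1) on both sides translates into the same constraints via the functoriality of $G$ along $\tilde{f}$; naturality in $z$ (using compatibility of the $\phi_z$ with the face maps of $Z$, which again rests on $f$ being display) then yields an isomorphism of instances $(\Xi_JG)[f]\cong\Xi_{J[f]}G[\tilde{f}]$. To upgrade this isomorphism to an equality of the tuplified instances $(\Pi_JG)[f]$ and $\Pi_{J[f]}G[\tilde{f}]$, I would invoke Lemma \ref{Lemma: display is tuple preserving}: since $f$ is display, $(\Pi_JG)[f]$ remains in tuple form, and at any attribute $A\in Z_0$ both sides literally equal $\prod_{a\in J(fA)}G(fA,a)$. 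Two isomorphic relational instances on tuple form that agree on attributes are identical, so the isomorphism collapses to an equality.

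For item (3), once $\phi$ is in hand the verification is a direct calculation inside $\Xi$: $(\lambda t)[f](z)=(\lambda t)(fz)=(t(y,b))_{P_{fz}}$, whereas $\lambda(t[\tilde{f}])(z)=(t[\tilde{f}](z',a))_{P_z}=(t(fz',a))_{P_z}$, and these two families are carried into one another by $\phi_z$; hence they represent the same element of $\Pi_{J[f]}G[\tilde{f}](z)$ under the identification established in (2). The main obstacle I anticipate is the bookkeeping around tuplification in item (2)—the pre-tuplified $\Xi_JG$ only agrees with its tuplification up to a canonical isomorphism, so one has to be vigilant about when an argument produces equality versus isomorphism, and it is precisely the display hypothesis on $f$, together with Lemma \ref{Lemma: display is tuple preserving}, that lets the isomorphism at the $\Xi$-level descend to an on-the-nose equality at the $\Pi$-level as required by the substitution rule.
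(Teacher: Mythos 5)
Your proposal is correct and follows essentially the same route as the paper's own proof: the same one-line computation for $\mathrm{Ap}_{\lambda t}=t$, the same bijection $P_z\to P_{fz}$ induced by $f$ being display to identify $(\Xi_JG)[f]$ with $\Xi_{J[f]}G[\tilde{f}]$, and the same comparison of the two families for item (3). The only difference is that you spell out more explicitly the descent from isomorphism of the pre-tuplified instances to equality of the tuplified ones (via agreement on attributes and Lemma \ref{Lemma: display is tuple preserving}), which the paper leaves largely implicit.
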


\section*{Part 2}

\renewcommand{\showelim}{true}
\subsection{Substitutions}

The following axioms state that contexts and substitutions form a category, acting on the types and elements. Below the axioms are stated the intended interpretations cf.\ Section \ref{section: the model}


\begin{mathpar} 
\inferrule*[right=subst-comp]
  {\Gamma,\Delta,\Theta\ \mathtt{context} \\ \sigma : \Gamma \to \Delta \\ \tau : \Delta \to \Theta}
  {\tau \circ \sigma : \Gamma \to \Theta}

\elim{
\inferrule*[right=subst-assoc]
  {\Gamma,\Delta, \Theta, \Lambda\ \mathtt{context} \\ \sigma : \Gamma \to \Delta \\ \tau : \Delta \to \Theta \\ \upsilon : \Theta \to \Lambda}
  {(\upsilon \circ \tau) \circ \sigma \equiv \upsilon \circ (\tau \circ \sigma): \Gamma \to \Lambda}
}


\inferrule*[right=subst-id]
    { \Gamma\ \mathtt{context}}
{id_\Gamma : \Gamma \to \Gamma }


\elim{
\inferrule*[right=subst-id-left]
    { \Gamma,\Delta\ \mathtt{context} \\ \sigma : \Gamma \to \Delta}
{id_\Gamma \circ \sigma \equiv \sigma: \Gamma \to \Delta }
}


\elim{
\inferrule*[right=subst-id-right]
    { \Gamma,\Delta\ \mathtt{context} \\ \sigma : \Gamma \to \Delta}
{\sigma \circ id_\Gamma  \equiv \sigma: \Gamma \to \Delta }
}


\inferrule*[right=type-transfer]
    { \elim{\Gamma, \Delta\ \mathtt{context} \\} \Gamma \vdash A \ \mathtt{type} \\ \sigma : \Delta \to \Gamma }
{\Delta \vdash A[\sigma]\ \mathtt{type} }


\elim{
\inferrule*[right=type-transfer-comp]
    { \elim{\Gamma, \Delta, \Theta\ \mathtt{context} \\} \Gamma \vdash A \ \mathtt{type} \\ \sigma : \Delta \to \Gamma \\ \tau : \Theta \to \Delta }
{\Theta \vdash A[\sigma \circ \tau] \equiv A[\sigma][\tau] }
}


\elim{
\inferrule*[right=type-transfer-id]
    { \elim{\Gamma\ \mathtt{context} \\} \Gamma \vdash A \ \mathtt{type} }
{\Gamma \vdash A[id_\Gamma] \equiv A }
}


\inferrule*[right=term-transfer]
    {\elim{ \Gamma, \Delta\ \mathtt{context} \\ \Gamma \vdash A \ \mathtt{type} \\} \Gamma \vdash a : A \\ \sigma : \Delta \to \Gamma}
{\Delta \vdash a[\sigma] : A[\sigma] }


\elim{
\inferrule*[right=term-transfer-comp]
    { \elim{\Gamma, \Delta, \Theta\ \mathtt{context} \\ \Gamma \vdash A \ \mathtt{type} \\} \Gamma \vdash a : A \\  \sigma : \Delta \to \Gamma \\ \tau : \Theta \to \Delta }
{\Theta \vdash a[\sigma \circ \tau] \equiv a[\sigma][\tau] : A[\sigma][\tau] }
}

\elim{
\inferrule*[right=term-transfer-id]
    { \elim{\Gamma\ \mathtt{context} \\ \Gamma \vdash A \ \mathtt{type} \\} \Gamma \vdash a : A }
{\Delta \vdash a[id_\Gamma] \equiv a : A }
}

\end{mathpar}


\begin{align*}
\bb{\sigma \circ \tau} &= \bb{\sigma} \circ \bb{\tau} &
\bb{A[\sigma]} &= \bb{A} \circ \bb{\sigma} &
\bb{Id_\Gamma} &= Id_{\bb{\Gamma}}  &
\bb{a[\sigma]} &= \bb{a} \bb{\sigma}
\end{align*}

\subsection{Context extension}

The following rules detail how types in a context extends it.


\begin{mathpar}
\inferrule*[right=cont-ext]
    { \Gamma\ \mathtt{context} \\ \Gamma \vdash A \ \mathtt{type}}
{\Gamma. A \ \mathtt{context}}


\inferrule*[right=projection]
    { \elim{ \Gamma\ \mathtt{context} \\} \Gamma \vdash A \ \mathtt{type}}
{{\downarrow} : \Gamma. A \to \Gamma}
\end{mathpar}
\begin{mathpar}

\inferrule*[right=variable]
    { \elim{ \Gamma\ \mathtt{context} \\} \Gamma \vdash A \ \mathtt{type}}
{\Gamma. A \vdash v : A[\downarrow]}


\inferrule*[right=evaluation]
    { \elim{\Gamma\ \mathtt{context} \\ \Gamma \vdash A \ \mathtt{type} \\} \Gamma \vdash a : A}
{{a\eval} : \Gamma \to \Gamma. A}


\inferrule*[right=section]
    { \elim{\Gamma\ \mathtt{context} \\ \Gamma \vdash A \ \mathtt{type} \\ }\Gamma \vdash a : A}
{\downarrow \circ (a\eval) \equiv id_\Gamma: \Gamma \to \Gamma.}


\inferrule*[right=var-eval]
    { \elim{\Gamma\ \mathtt{context} \\ \Gamma \vdash A \ \mathtt{type} \\} \Gamma \vdash a : A}
{\Gamma \vdash v[a\eval] \equiv a : A}


\inferrule*[right=lifting]
    { \elim {\Gamma,\Delta\ \mathtt{context} \\ }\Gamma \vdash A \ \mathtt{type} \\ \sigma : \Delta \to \Gamma }
{{\sigma.A} : \Delta.A[\sigma] \to \Gamma. A}


\inferrule*[right=project-lift]
    { \elim{\Gamma,\Delta\ \mathtt{context} \\} \Gamma \vdash A \ \mathtt{type} \\ \sigma : \Delta \to \Gamma }
{{\downarrow} \circ (\sigma.A) \equiv \sigma \circ {\downarrow} : \Delta.A[\sigma] \to \Gamma}


\inferrule*[right=variable-lift]
    { \elim{\Gamma,\Delta\ \mathtt{context} \\ }\Gamma \vdash A \ \mathtt{type} \\ \sigma : \Delta \to \Gamma }
{ v[f.A] \equiv v : \Delta.A[\sigma]}


\inferrule*[right=eval-lift]
    { \elim{\Gamma,\Delta\ \mathtt{context} \\ \Gamma \vdash A \ \mathtt{type} \\} \Gamma \vdash a : A \\ \sigma : \Delta \to \Gamma }
{ (\sigma.A) \circ (a[\sigma]\eval) \equiv a\eval \circ \sigma : \Delta \to \Gamma.A}


\inferrule*[right= compose-lift]
    { \elim{\Gamma,\Delta,\Theta\ \mathtt{context} \\} \Gamma \vdash A \ \mathtt{type} \\ \sigma : \Delta \to \Gamma \\ \tau : \Theta \to \Delta}
{(\sigma.A) \circ (\tau.A[\sigma]) \equiv (\sigma \circ \tau) . A : \Theta.A[\sigma \circ \tau] \to \Gamma.A}


\inferrule*[right=diagonal-lift]
    { \elim{\Gamma,\Delta\ \mathtt{context} \\} \Gamma \vdash A \ \mathtt{type} }
{ \downarrow.A \circ v\eval \equiv Id_{\Gamma.A}: \Gamma.A \to \Gamma.A}

\end{mathpar}


\begin{align*}
\bb{\Gamma.A} &= \int_{{\bb{\Gamma}}}\bb{A} &
\bb{\downarrow} &= p &
 \bb{v} &= v &
\bb{a\eval} &= \widehat{\bb a} &
\bb{\sigma.A} &= \widetilde{\bb \sigma}
\end{align*}

\elim{
\subsubsection{Reading explicit substitutitions} 

The visually inclined reader will find it helpful to draw diagrams, to understand the explicit substitutions. The more operationally inclined reader might find the following heuristics and examples useful.

The basic substitutions, $\downarrow$ and $t \eval$ and $\sigma.A$, can be seen as modifying the current context. The projection $\downarrow$ removes an assumption. The evaluation $t \eval$, introduces an assumption. The lifting $\sigma.A$ first removes an assumtion of $A[\sigma]$, then performs the action specified by $\sigma$ and then introduces the assumption $A$. The reading order of substitutions is right to left.

\begin{example}
Assume $\Gamma\ \mathtt{context}$, and that $\Gamma \vdash A\ \mathtt{type}$, and $\Gamma \vdash C\ \mathtt{type}$. Then cosider the judgement $\Gamma.C \vdash A[\downarrow]\ \mathtt{type}$, which is a derivable instance of the type transfer rule. Following the above heuristics, the $[\downarrow]$, removes the $.C$ from the context, allowing us to access $A$ from $\Gamma$.

Let us now extend the example with $\Gamma.A \vdash B\ \mathtt{type}$, and $\Gamma.C \vdash D\ \mathtt{type}$, and $\Gamma.C \vdash a:A[\downarrow]$. Consider now the judgement, $\Gamma.C.D \vdash B[\downarrow.A][a\eval][\downarrow]$. Reading from right to left, we obtain the heuristic: $\downarrow$ removes the assumption $D$(leaving $\Gamma.C$). In this context, $a$ is of type $A[\downarrow]$, so $a\eval$ introduces the assumption $A[\downarrow]$. Finally $({\downarrow}.A)$ removes the assumption $A[\downarrow]$, then applies $\downarrow$ which removes the assumption $C$, and adds the assumption $A$, leaving the context $\Gamma.A$, where indeed $B$ is a type.

\end{example}
}

\subsection{The $\Pi$-type}

\begin{mathpar}
\inferrule*[right=$\Pi$-form]
 { \Gamma \ \mathtt{context} \\ \Gamma \vdash A\ \mathtt{type} \\ \Gamma.A\vdash B\ \mathtt{type}}
{\Gamma \vdash \Pi_A B\ \mathtt{type} }

\inferrule*[right=$\Pi$-subst]
 { \Gamma,\Delta \ \mathtt{context} \\ \Gamma \vdash A\ \mathtt{type} \\ \Gamma.A\vdash B\ \mathtt{type} \\ \sigma : \Delta \to \Gamma}
{\Gamma \vdash \Pi_A B [\sigma] \equiv \Pi_{A[\Sigma]}B[\sigma.A] }

\inferrule*[right=$\Pi$-intro]
 { \Gamma.A \vdash b : B}
{\Gamma \vdash \lambda b : \Pi_A B }

\inferrule*[right=$\Pi$-elim]
 { \Gamma \vdash f : \Pi_A B}
{\Gamma .A \vdash \operatorname{apply} f : B }

\inferrule*[right=$\Pi$-comp]
 { \Gamma.A \vdash b : B}
{\Gamma .A \vdash \operatorname{apply}(\lambda b) \equiv b : B }

\end{mathpar}

\begin{align*}
  \bb{\Pi_A B} &= \Pi_{\bb{A}}\bb{B} &
  \bb{\lambda f} &= \lambda \bb{f} &
  \bb{\operatorname{apply}} &= Ap
\end{align*}

\subsection{Other types}

These types correspond to the constructions with the same names in subsection \ref{sigma-modell}.

\begin{description}
\item[The $\Sigma$-type]

\begin{mathpar}
\inferrule*[right=$\Sigma$-form]
 { \Gamma \ \mathtt{context} \\ \Gamma \vdash A\ \mathtt{type} \\ \Gamma.A\vdash B\ \mathtt{type}}
{\Gamma \vdash \Sigma_A B\ \mathtt{type} }

\inferrule*[right=$\Sigma$-intro]
{ \Gamma.A\vdash B\ \mathtt{type}}
{\Gamma.A.B \vdash \operatorname{pair} : \Sigma_A B [\downarrow][\downarrow]}

\elim{
\inferrule*[right=$\Sigma$-elim]
 { \Gamma.\Sigma_A B \vdash C\ \mathtt{type} \\ \Gamma.A.B \vdash c_0 : C[(\downarrow \circ \downarrow).\Sigma_A B][\operatorname{pair}\eval]}
{\Gamma. \Sigma_A B \vdash \operatorname{rec}_\Sigma \, c_0 : C}

\inferrule*[right=$\Sigma$-comp]
 { \Gamma.\Sigma_A B \vdash C\ \mathtt{type} \\ \Gamma.A.B \vdash c_0 : C[(\downarrow \circ \downarrow).\Sigma_A B][\operatorname{pair}\eval]}
{\Gamma.A.B \vdash (\operatorname{rec}_\Sigma \, c_0)[(\downarrow \circ \downarrow).\Sigma_A B][\operatorname{pair}\eval]  \equiv  c : C[(\downarrow \circ \downarrow).\Sigma_A B][\operatorname{pair}\eval ] }
}
\end{mathpar}

\elim{
\begin{align*}
  \bb{\Sigma_A B} &= \Sigma_{\bb{A}}\bb{B} &
  \bb{\operatorname{pair}} &= \operatorname{pair} \elim{&
  \bb{\operatorname{rec}_\Sigma} &= \operatorname{rec}_\Sigma}
\end{align*}}

\item[The indentity type]

\begin{mathpar}
\inferrule*[right=$Id$-form]
 { \Gamma \ \mathtt{context} \\ \Gamma \vdash A\ \mathtt{type}}
{\Gamma .A .A[\downarrow]\vdash Id_A\ \mathtt{type} }

\inferrule*[right=$Id$-intro]
{ \Gamma \vdash A\ \mathtt{type}}
{\Gamma.A \vdash \operatorname{refl} : Id_A [v \eval]}

\elim{
\inferrule*[right=$Id$-elim]
 { \Gamma.A.A[\downarrow].Id_A \vdash C\ \mathtt{type} \\ \Gamma.A \vdash c_0 : C[(v \eval).Id_A][\operatorname{refl}\eval]}
{\Gamma.A.A[\downarrow].Id_A \vdash \operatorname{rec}_{Id} \, c_0 : C}

\inferrule*[right=$Id$-comp]
 { \Gamma.A.A[\downarrow].Id_A \vdash C\ \mathtt{type} \\ \Gamma.A \vdash c_0 : C[(v \eval).Id_A][\operatorname{refl}\eval]}
{ \Gamma.A \vdash (\operatorname{rec}_{Id} \, c_0)[(v \eval).Id_A][\operatorname{refl}\eval] \equiv c_0 : C[(v \eval).Id_A][\operatorname{refl}\eval] }
}

\end{mathpar}

\elim{
\begin{align*}
  \bb{Id_A} &= Id_{\bb{A}}\bb{B} &
  \bb{\operatorname{refl}} &= \operatorname{refl} \elim{&
  \bb{\operatorname{rec}_{Id}} &= \operatorname{rec}_{Id}}
\end{align*}
}

\item[The $+$-type]

\begin{mathpar}
\inferrule*[right=$+$-form]
 {\elim{ \Gamma \ \mathtt{context} \\} \Gamma \vdash A\ \mathtt{type} \\ \Gamma \vdash B\ \mathtt{type}}
{\Gamma \vdash A+B\ \mathtt{type} }

\inferrule*[right=$+$-intro-left]
{ \elim{\Gamma \ \mathtt{context} \\ \Gamma \vdash A\ \mathtt{type} \\ \Gamma \vdash B\ \mathtt{type}}}
{\Gamma.A \vdash \operatorname{left} : (A + B)[\downarrow] }

\inferrule*[right=$+$-intro-right]
{ \elim{\Gamma \ \mathtt{context} \\ \Gamma \vdash A\ \mathtt{type} \\ \Gamma \vdash A\ \mathtt{type}}}
{\Gamma.B \vdash \operatorname{right} : (A + B)[\downarrow]}
\elim{
\inferrule*[right=$+$-elim]
 { \Gamma.(A+B) \vdash C\ \mathtt{type} \\ \Gamma.A \vdash c_0 : C[(\downarrow).(A+B)][\operatorname{left}] \\ \Gamma.B \vdash c_1 : C[(\downarrow).(A+B)][\operatorname{right}] }
{ \Gamma.(A+B) \vdash \operatorname{rec}_{+}\, c_0 \, c_1 : C}

\inferrule*[right=$+$-comp-left]
 {  \Gamma.(A+B) \vdash C\ \mathtt{type} \\ \Gamma.A \vdash c_0 : C[(\downarrow).(A+B)][\operatorname{left} \eval] \\ \Gamma.B \vdash c_1 : C[(\downarrow).(A+B)][\operatorname{right} \eval] }
{ \Gamma.A \vdash (\operatorname{rec}_{+}\, c_0 \, c_1)[(\downarrow).(A+B)][\operatorname{left}\eval] \equiv c_0 : C[(\downarrow).(A+B)][\operatorname{left}\eval] }

\inferrule*[right=$+$-comp-right]
 {  \Gamma.(A+B) \vdash C\ \mathtt{type} \\ \Gamma.A \vdash c_0 : C[(\downarrow).(A+B)][\operatorname{left} \eval] \\ \Gamma.B \vdash c_1 : C[(\downarrow).(A+B)][\operatorname{right} \eval] }
{ \Gamma.B \vdash (\operatorname{rec}_{+}\, c_0 \, c_1)[(\downarrow).(A+B)][\operatorname{right}\eval] \equiv c_1 : C[(\downarrow).(A+B)][\operatorname{right}\eval] }
}
\end{mathpar}

\item[The $0$-type]

\begin{mathpar}
\inferrule*[right=$0$-formation]
{\Gamma\ \mathtt{context}}
{\Gamma \vdash 0\ \mathtt{type}}

\inferrule*[right=$0$-elimination]
{\Gamma.0 \vdash C\ \mathtt{type}}
{\Gamma.0\vdash \operatorname{rec}_0 : C}
\end{mathpar}

\item[The $1$-type]

\begin{mathpar}
\inferrule*[right=$1$-formation]
{\Gamma\ \mathtt{context}}
{\Gamma \vdash 1\ \mathtt{type}}

\inferrule*[right=$*$-intro]
{\Gamma\ \mathtt{context}}
{\Gamma \vdash * : 1}

\inferrule*[right=$1$-elimination]
{\Gamma.0 \vdash C\ \mathtt{type} \\ \Gamma \vdash c_0 : C[*\eval]}
{\Gamma.1\vdash \operatorname{rec}_1 \, c_0 : C}
\end{mathpar}

\end{description}

\subsection{Context and substitution constants}
For each natural number $n$, and for each $i$,$j$ and $n$, such that $i < j\le n+2$
\begin{mathpar}
\inferrule*[right=$\Delta_n$-form]
    { }
{\Delta_n \ \mathtt{context}}

\inferrule*[right=$d_i^n$-intro]
    { }
{d_i^n : \Delta_n \to \Delta_{n+1}}

\label{simplicia-identity}
\inferrule*[right=$d_i^n$-comp]
    { }
{d_i^{n+1} \circ d_j^n \equiv d_{j-1}^{n+1} \circ d_i^n: \Delta_n \to \Delta_{n+2}}
\end{mathpar}

\begin{align*}
  \bb{\Delta_n} &= \Delta_n&
  \bb{ d_i^n} &= d_i^n
\end{align*}

\end{document}